%

\documentclass[preprint]{imsart} 

\RequirePackage{amsthm,amsmath,amsfonts,amssymb}
\RequirePackage[numbers]{natbib}
\RequirePackage[colorlinks,citecolor=blue,urlcolor=blue]{hyperref}

\startlocaldefs
\theoremstyle{plain}



\usepackage{amsmath}
\usepackage{amsthm}
\usepackage{bigints}
\usepackage{amssymb}
\usepackage{bm}
\usepackage{color}
\usepackage{graphicx}
\usepackage{float}
\usepackage{tikz}

\newcommand{\pp}{\mathbb{P}}
\newcommand{\ee}{\mathbb{E}}
\newcommand{\lk}{\left[ }
\newcommand{\rk}{\right] }
\newcommand{\lc}{\left(}
\newcommand{\rc}{\right)}
\newcommand{\RR}{\mathbb{R}}
\newcommand{\bR}{\overline{\mathbb{R}}}
\newcommand{\fb}{\overline{F}}
\newcommand{\gb}{\overline{G}}
\newcommand{\QQ}{\mathbb{Q}}
\newcommand{\N}{\mathbb{N}}

\newcommand{\bc}{\mathcal{B}}
\newcommand{\id}{\mathbf{1}}
\newcommand{\ac}{\mathcal{A}}
\newcommand{\nlim}{\lim_{n\to\infty}}

\newcommand{\bmx}{{\bm{x}}}
\newcommand{\bmX}{{\bm{X}}}

\newcommand{\bmt}{{\bm{t}}}

\newcommand{\bell}{{\bm{\ell}}}
\newcommand{\binfty}{{\bm{\infty}}}

\newcommand{\sumn}{\sum_{i=1}^n}
\newcommand{\sumd}{\sum_{i=1}^d}
\newcommand{\prodd}{\prod_{i=1}^d}

\newcommand{\leqn}{1\leq i\leq n}
\newcommand{\minf}{(-\infty,\infty]}

\newtheorem{thm}{Theorem}[section]
\newtheorem*{thm*}{Theorem}

\newtheorem{ex}[thm]{Example}
\newtheorem{lem}[thm]{Lemma}
\newtheorem{rem}{Remark}
\newtheorem{defn}[thm]{Definition}
\newtheorem{cor}[thm]{Corollary}
\newtheorem{prop}[thm]{Proposition}
\newtheorem*{cond}{Condition $(\Diamond)$}
\newtheorem*{condprime}{Condition $(\Diamond^\prime)$}
\newtheoremstyle{plain}
  {\topsep}   
  {\topsep}   
  {\itshape}  
  {0pt}       
  {\bfseries} 
  {}         
  {\newline} 
   {}

\allowdisplaybreaks

\endlocaldefs

\begin{document}

\begin{frontmatter}
\title{Exchangeable min-id sequences: Characterization, exponent measures and non-decreasing id-processes}
\runtitle{Exch min-id sequences: Characterization, exponent measures nnnd id-processes}

\begin{aug}
\author[A]{\fnms{Florian} \snm{Br{\"u}ck}\ead[label=e1,mark]{florian.brueck@tum.de} } 
\author[B]{\fnms{Jan-Frederik} \snm{Mai}\ead[label=e2,mark]{jan-frederik.mai@xaia.com}}
\author[A]{\fnms{Matthias} \snm{Scherer}\ead[label=e3,mark]{scherer@tum.de}}

\address[A]{Lehrstuhl f{\"u}r Finanzmathematik, Technische Universit{\"a}t M{\"u}nchen, Garching, Germany, \printead{e1,e3}}

\address[B]{XAIA Investment GmbH, M{\"u}nchen, Germany, \printead{e2}}

\end{aug}

\begin{abstract}
We establish a one-to-one correspondence between (i) exchangeable sequences of random variables whose finite-dimensional distributions are minimum (or maximum) infinitely divisible and (ii) non-negative, non-decreasing, infinitely divisible stochastic processes. The exponent measure of an exchangeable minimum infinitely divisible sequence is shown to be the sum of a very simple ``drift measure'' and a mixture of product probability measures, which uniquely corresponds to the L\'evy measure of a non-negative and non-decreasing infinitely divisible process. The latter is shown to be supported on non-negative and non-decreasing functions. In probabilistic terms, the aforementioned infinitely divisible process is equal to the conditional cumulative hazard process associated with the exchangeable sequence of random variables with minimum (or maximum) infinitely divisible marginals. Our results provide an analytic umbrella which embeds the de Finetti subfamilies of many interesting classes of multivariate distributions, such as exogenous shock models, exponential and geometric laws with lack-of-memory property, min-stable multivariate exponential and extreme-value distributions, as well as reciprocal Archimedean copulas with completely monotone generator and Archimedean copulas with log-completely monotone generator.
\end{abstract}

\begin{keyword}[class=MSC2020]
\kwd[Primary ]{60G99}
\kwd[; secondary ]{60E07}
\kwd{62H05}
\end{keyword}

\begin{keyword}
\kwd{De Finetti representation}
\kwd{min/max-id sequences}
\kwd{exponent measure}
\kwd{infinitely divisible c\`adl\`ag process}
\kwd{chronometer}
\end{keyword}

\end{frontmatter}


\section{Introduction}
The present article bridges the gap between two well-established theories: exchangeable sequences of minimum/maximum infinitely divisible (min-id/max-id) random variables and non-negative and non-decreasing (nnnd) infinitely divisible stochastic processes. So far, these topics are studied in (mostly) separate communities and we seek to address both. On the one hand, our results can be understood as a particular application of the theory of infinitely divisible (id) processes. General id-processes are studied in \cite{kabluchko2016,BarndorffNielsen2006InfiniteDF,rosinskiinfdivproc}, related literature concerned with subfamilies comprises \cite{hakassou2013idt,koppmolchanov,maischereridtsubordinators2019,mansuy2005processes,hakassou2012alphaidt,bertoinsubordinators,satolevyinfinitely,skorohod1991random}. The article \cite{rosinskiinfdivproc} unifies the literature by establishing a general analytical apparatus to deal with id-processes by means of a general L\'evy measure on the path space $\RR^\RR$. We refine these results by restricting our attention to processes with nnnd c\`adl\`ag paths, but remain totally general aside from this assumption. On the other hand, finite-dimensional probability distributions that are max- (or min-)id naturally arise as limit laws of suitably scaled maxima of independent random vectors, see \cite{husler1989limit}, a textbook account being \cite{resnickextreme}. Such distributions and prominent subfamilies, like max- (or min-) stable laws, are well-established in the applied probability and statistics literature, see e.g.\ \cite{balkema1977max,marshallolkin90,alzaid_proschan_1994,joe1996multivariate,mulinacci2015marshall,genestreciparchim}, and have recently gained interest in the modeling of spatial extremes, see \cite{huser2018penultimate, BoppShabyHuser2020, PADOAN20131, Huser2018MaxinfinitelyDM}. In analytical terms, such probability distributions are canonically described by a so-called exponent measure and the work of \cite{vatanmaxidinfinitedim} generalizes this framework to infinite sequences of random variables.\\
In our article, we apply de Finetti's Theorem~\cite[Chapter 1.3]{aldousexchangeable} to derive a correspondence between nnnd c\`adl\`ag id-processes and exchangeable sequences of random variables all of whose finite-dimensional distributions are min- (or max-)id. In fact, given a nnnd c\`adl\`ag id-process $H=(H_t)_{t \in \RR}$, we may define an infinite exchangeable sequence $\bmX=(X_1,X_2,\ldots)$ of min-id random variables via the almost sure relation
\begin{align}
 \pp\Bigg(\bigcap_{i\in\N} \{X_i >t_i\}\ \Bigg\vert \ H \Bigg) 
 = \prod_{i\in\N} \exp\lc-H_{t_i}\rc, \ (t_1,t_2,\ldots) \in [-\infty,\infty)^{\N}.\label{eqnconstructionminidsequence}
\end{align}
The sequence $\bmX=(X_1,X_2,\ldots)$ is a conditionally (on H) i.i.d.\ sequence with conditional marginal survival function $t\mapsto \exp(-H_t)$\footnote{This follows from (\ref{eqnconstructionminidsequence}) by choosing $(t_1,t_2,\ldots)$ such that all but one $t_i$ are equal to $-\infty$. }, where $H_{-\infty}$ is defined as the almost sure limit $\lim_{t\to-\infty}H_t$.
While we may plug in arbitrary nnnd c\`adl\`ag id-processes $H$ on the right-hand side of Equation (\ref{eqnconstructionminidsequence}), we prove that one actually obtains each exchangeable min-id sequence $\bmX$ on the left-hand side of Equation (\ref{eqnconstructionminidsequence}) via this construction, i.e.\ we prove a one-to-one correspondence of exchangeable min-id sequences and nnnd c\`adl\`ag id-processes.

The assumption of (infinite) exchangeability often appears in practical applications when the overall complexity of the model needs to be limited. For example, if a multivariate phenomenon cannot be modeled by independent components, but further information about the dependence structure of the margins is missing, it is often reasonable to assume exchangeability of the components. 
The assumption of exchangeability is convenient in such situations since it preserves complete flexibility between independence and full dependence.
Moreover, even when there are legitimate reasons to believe that the modeled margins are not exchangeable, it is often reasonable to divide the modeled components into homogeneous subgroups which are intrinsically exchangeable. 
Building a model which has exchangeable subgroups allows to maintain a quite simple dependence structure inside these subgroup, while the non-exchangeable dependence structure between subgroups remains tractable.

A particular advantage of min-(or max-)id distributions in the modeling of extreme events is their flexible dependence structure. For example, the so-called extremal coefficient, which measures the distance of the model to the model of independent components at a fixed threshold, can actually be chosen to be threshold-dependent, a feature that is often discovered in real world data sets, see e.g.\ \cite{Huser2018MaxinfinitelyDM}. On the other hand, if one would resort to classical min-stable models, such features cannot be modeled, since the extremal coefficient of min-stable models with identical margins is known to be independent of the chosen threshold. Moreover, going from min-stability to min-infinite divisibility comes essentially without additional mathematical technicalities, while min-id models easily incorporate arbitrary marginal distributions in contrast to the restricted flexibility of marginal distributions in min-stable models.

Regarding the analytical treatments of $H$ and $\bmX$, we characterize the L\'evy measures of nnnd c\`adl\`ag id-processes as precisely those which are concentrated on nnnd paths and characterize the exponent measures of exchangeable min-id sequences $\bmX$ as precisely those that are the sum of some simple ``drift measure'' and a (possibly infinite) mixture of product probability measures. The following diagram summarizes our findings in a nutshell: 
\begin{figure}[H]
    \centering
\begin{tikzpicture}[scale=0.75]
\draw (1.75,8.4) node[right=1pt] {$\bmX=(X_1,X_2,\ldots )$};
\draw (10,8.4) node[right=1pt] {$(H_t)_{t\in\RR}$};
\path[draw=black,dashed](0.0,-0.1) rectangle (6.1,3.1);
\path[draw=black](3.0,0.0) rectangle (6.0,3.0);
\path[draw=black](8.0,0.0) rectangle (11.0,3.0);
\path[draw=black,dashed](7.9,-0.1) rectangle (14,3.1);
\path[draw=black](8.0,5.0) rectangle (11.0,8.0);
\path[draw=black,dashed](7.9,4.9) rectangle (14,8.1);
\path[draw=black,dashed](0.0,4.9) rectangle (6.1,8.1);
\path[draw=black](3.0,5.0) rectangle (6.0,8.0);

\draw[<->,dashed] (1.5,3.1) -- (1.5,4.9);
\draw[<->,dashed] (12.5,3.1) -- (12.5,4.9);
\draw[<->] (4.5,3) -- (4.5,5);
\draw[<->] (9.5,3) -- (9.5,5);
\draw[<->] (6,1.5) -- (8,1.5);
\draw[<->] (6,6.5) -- (8,6.5);

\draw (0.7,1.6) node[right=1pt] {\footnotesize Exponent};
\draw (0.7,1.2) node[right=1pt] {\footnotesize measure};
\draw (3.25,2.0) node[right=1pt] {\footnotesize Mixture of};
\draw (3.25,1.6) node[right=1pt] {\footnotesize product };
\draw (3.25,1.2) node[right=1pt] {\footnotesize probability};
\draw (3.25,0.8) node[right=1pt] {\footnotesize measures};
\draw (8.15,1.6) node[right=1pt] {\footnotesize Supported on};
\draw (8.15,1.2) node[right=1pt] {\footnotesize nnnd paths};
\draw (11.6,1.6) node[right=1pt] {\footnotesize L\' evy};
\draw (11.6,1.2) node[right=1pt] {\footnotesize measure};
\draw (0.7,6.6) node[right=1pt] {\footnotesize Min-id};
\draw (0.7,6.2) node[right=1pt] {\footnotesize sequence};
\draw (3.35,6.6) node[right=1pt] {\footnotesize Exchangeable};
\draw (9.0,6.6) node[right=1pt] {\footnotesize nnnd };
\draw (11.6,6.6) node[right=1pt] {\footnotesize c\`adl\`ag};
\draw (11.6,6.2) node[right=1pt] {\footnotesize id-process};

\draw (6.1,1.7) node[right=1pt] {\footnotesize Thm \ref{theoremexponentmeasureisiidmixture}};
\draw (6.1,6.7) node[right=1pt] {\footnotesize Thm \ref{theoremchroncorrespondstominiddist}};
\draw (1.2,3.0) node[right=1pt,rotate=90] {\tiny \cite{vatanmaxidinfinitedim}};
\draw (4.2,3.15) node[right=1pt,rotate=90] {\footnotesize Thm \ref{theoremexponentmeasureisiidmixture}};
\draw (9.2,3.05) node[right=1pt,rotate=90] {\footnotesize Prop \ref{propositionltchronometer}};
\draw (12.2,2.8) node[right=1pt,rotate=90] {\tiny \cite{rosinskiinfdivproc}};
\end{tikzpicture}
\caption{Correspondences of exchangeable min-id sequences (top, left), exponent measures (bottom, left), non-decreasing and non-negative c\`adl\`ag id-processes (top, right) and L\'evy measures (bottom, right).}
\label{Venndiagcorrespondencesminididexpmeasurelevymeasure}
\end{figure}
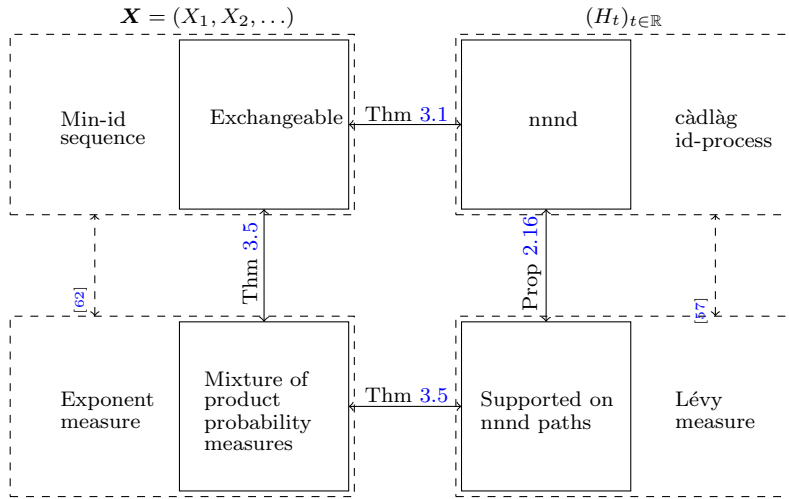

For many subfamilies of id-processes, there exist well-established theories and applications on the stochastic process level. The relation between the nnnd instances of these subfamilies with the multivariate probability laws of $\bmX$ via de Finetti's Theorem has been explored in several previous articles, which are unified and extended by the present work. Firstly, if $H$ is a non-decreasing L\'evy process (aka L\'evy subordinator), the finite-dimensional distributions of the corresponding sequence $\bmX$ are so-called Marshall--Olkin distributions, a result first found in \cite{maischererreparammo}, re-discovered and further explored in \cite{sun2017marshall}. Secondly and slightly more general, if $H$ is a nnnd additive process (aka additive subordinator), the survival function defined by $t \mapsto \exp(-H_t)$ is called a neutral-to-the-right prior in non-parametric Bayesian statistics. The resulting non-parametric Bayesian estimation techniques are explored, e.g., in \cite{kalbfleisch1978,hjort1990,james2009,james2005,epifani2003,regazzini2003}, with the most prominent representative being the Dirichlet process developed by \cite{ferguson1973}. The finite-dimensional distributions of the associated exchangeable sequence $\bmX$ are shown to correspond to exogenous shock models in \cite{MaiSchenkSchererexchangeableexshock,Sloot2020b}. A special case of particular interest is obtained in case $H$ is a Sato subordinator, leading to a characterization of self-decomposability on the half-line in terms multivariate distribution functions in \cite{Maiselfdecomp}. Thirdly, if $H$ is non-decreasing and strongly infinitely divisible with respect to time (aka time stable/strong-idt), see \cite{koppmolchanov,mansuy2005processes,hakassou2013idt}, the finite-dimensional distributions of $\bmX$ are shown to be min-stable in \cite{MaiSchererexMSMVE,maicanonicalspecrepofstabtail}. The following diagram summarizes the correspondences.
\begin{figure}[H]
    \centering
\begin{tikzpicture}[scale=0.74]
\path[draw=black](9.0,0.0) rectangle (16.0,7.0);
\path[draw=black](0.0,0.0) rectangle (7.0,7.0);
\draw (7.15,6.5) node[right=1pt] {\tiny Thm \ref{theoremchroncorrespondstominiddist}};
\draw[<->] (5.5,6.3) -- (10.8,6.3);
\path[draw=green,fill=green!80!black,opacity=0.5](11,2) rectangle (14.3,3.0);
\path[draw=green,fill=green!80!black,opacity=0.5](2,2) rectangle (5.3,3.0);
\path[thick,draw=yellow,fill=yellow!80!black,opacity=0.5](9.8,1.0) rectangle (14.3,4.0);
\path[thick,draw=yellow,fill=yellow!80!black,opacity=0.5](0.8,1.0) rectangle (5.3,4.0);
\path[draw=red,thick,fill=red!80!black,opacity=0.5](11.0,3.0) rectangle (15.0,6.0);
\path[draw=red,thick,fill=red!80!black,opacity=0.5](2.0,3.0) rectangle (6.0,6.0);
\draw (11.5,4.8) node[right=1pt] {\footnotesize subordinator};
\draw (11.5,5.2) node[right=1pt] {\footnotesize Strong-idt};
\draw (3.0,5.0) node[right=1pt] {\footnotesize Min-stable};
\draw (7.0,5.2) node[right=1pt] {\tiny \cite{maicanonicalspecrepofstabtail}};
\draw (6.2,4.8) node[right=1pt] {\tiny  \cite{MaiSchererexMSMVE}};
\draw[<->] (5.5,5.0) -- (11.2,5.0);
\draw (0.63,1.5) node[right=1pt] {\footnotesize Exogenous shock model};
\draw (10.8,1.6) node[right=1pt] {\footnotesize Additive};
\draw (10.8,1.2) node[right=1pt] {\footnotesize subordinator};
\draw (7.0,1.7) node[right=1pt] {\tiny  \cite{Sloot2020b}};
\draw (6.7,1.3) node[right=1pt] {\tiny  \cite{MaiSchenkSchererexchangeableexshock}};
\draw[<->] (5.0,1.5) -- (10.8,1.5);
\draw (11,2.5) node[right=1pt] {\footnotesize Sato subordinator };
\draw (2.5,2.5) node[right=1pt] {\footnotesize  H-Sato};
\draw (6.7,2.7) node[right=1pt] {\tiny  \cite{Maiselfdecomp}};
\draw[<->] (5.0,2.5) -- (11.2,2.5);
\draw (9,0.5) node[right=1pt] {\footnotesize Extended chronometer};
\draw (0,0.5) node[right=1pt] {\footnotesize Min-id};
\draw (11,3.5) node[right=1pt] {\footnotesize L\'evy subordinator};
\draw (2.0,3.5) node[right=1pt] {\footnotesize  Marshall--Olkin};
\draw (7.0,3.7) node[right=1pt] {\tiny \cite{janphd}};
\draw (6.4,3.3) node[right=1pt] {\tiny \cite{MaiSchererextendibilityMO}};
\draw[<->] (5.0,3.5) -- (11.2,3.5);
\draw (9.7,7.3) node[right=1pt] {$(H_t)_{t\in\RR}$ nnnd and c\`adl\`ag};
\draw (0.0,7.3) node[right=1pt] {$\mathbf{X}=(X_1,X_2,\ldots )$ exchangeable};
\end{tikzpicture}
\caption{Embedding of established correspondences of nnnd c\`adl\`ag processes and exchangeable sequences into the present framework.}
\label{Venndiagprocessvssequence}
\end{figure}
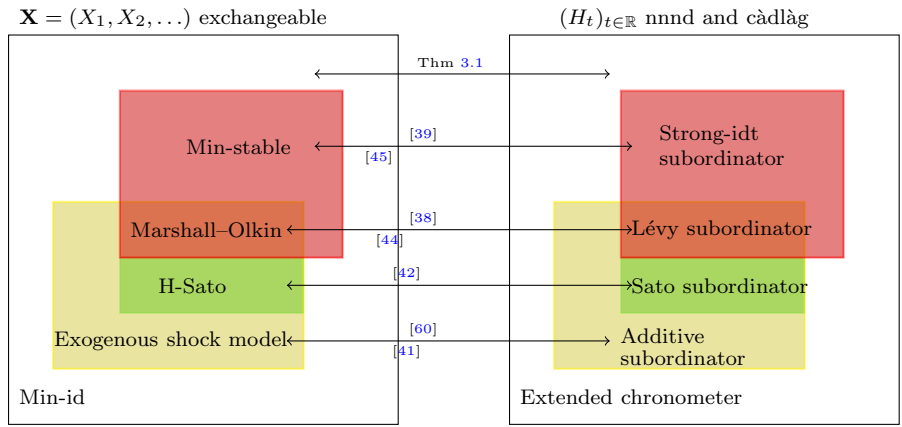

It is worth noting that our framework is general enough to include non-continuous min-id distributions, which correspond to id-processes $H$ that are not stochastically continuous. While this stands in glaring contrast to most of the aforementioned references and might on first glimpse be accompanied by technical problems, our derivations show that a distinction between stochastically continuous and non-continuous processes is not a crucial technical obstacle, but rather a distinctive feature to study after a general theory is established.

\subsection*{Structure of the manuscript}

In Section~\ref{sectionmathprelim}, we recall the most important results about min-id distributions and characterize their exchangeable subclass. Furthermore, we characterize nnnd infinitely divisible stochastic processes by their L\'evy measure and drift. As a byproduct, we show that every c\`adl\`ag id-process can be represented as the sum of i.i.d.\ c\`adl\`ag processes, which solves an open problem posed by \cite{BarndorffNielsen2006InfiniteDF}. Section~\ref{sectionmainresults} provides the main contributions of this work. First, we show that each exchangeable min-id sequence uniquely corresponds to an nnnd infinitely divisible c\`adl\`ag process. Second, we show that the exponent measure of an exchangeable min-id sequence is given by the sum of a simple drift measure and a mixture of product probability measures. In Section~\ref{sectionexamples} we present some important examples of exchangeable min-id sequences and embed them into our framework. A summary of the key findings is given in Section~\ref{sectionconclusion}. All non-trivial proofs are deferred to Appendix \ref{appendixproofs}.

\section{Preliminaries}
\label{sectionmathprelim}

\subsection{Notation}
The following notation is used throughout this paper. A topological space $(S,\tau)$ is always equipped with its Borel $\sigma$-algebra, denoted as $\bc(S)$. Upon existence, we frequently use the notation $0_S$ to refer to the neutral element w.r.t.\ addition in $S$. The letters $\RR$, $\bR$, $\QQ$, $\N$ and $\N_0$ denote the real numbers, (extended) real numbers including $\pm \infty$, rational numbers, natural numbers and natural numbers including $0$, all equipped with their standard topologies. For $A\subset\bR$, $A^\N$ denotes the space of $A$-valued sequences equipped with the product (subspace-)topology. Vectors and sequences are written in bold letters to distinguish them from scalars. The symbols $>, \geq , <$ and $\leq$ are understood componentwise, e.g.\ for $\bm x, \bm y \in\bR^d$, $\bm x< \bm y$ means that $x_i<y_i$ for all $1\leq i\leq d$. Similarly, for every $\bmx,\bm y\in \bR^d$, the operators $\max$ (resp.\ $\min$)$\{\bm x,\bm y\}$ are applied componentwise. For every set $A\subset \bR^d$ we define $\max$ (resp.\ $\min$, $\sup$, $\inf$) $A$ as the componentwise maximum (resp. minimum, supremum, infimum) of elements in $A$, where $\min \emptyset:=\inf \emptyset:=\binfty$ and $\max\emptyset:=\sup \emptyset:=-\binfty$. Moreover, for any $\bm a,\bm b\in\bR^d$ we define $[\bm a,\bm b]:=\times_{i=1}^d [a_i,b_i]$, where $[a_i,b_i]$ denotes a closed interval. The obvious modifications apply to $(\bm a,\bm b)$, $[\bm a,\bm b)$ and $[\bm a,\bm b)$. The function $\id_A (\bmx)$ denotes the indicator function of a set $A$.

$D(T)$ (resp.\ $D^\infty(T)$) denotes the space of real-valued (resp.\  extended real-valued) c\`adl\`ag functions, i.e.\ right-continuous functions with left limits, which are indexed by a set $T\subset \RR$.  $D^\infty(T)$ is always equipped with the (Borel) $\sigma$-algebra generated by the finite dimensional projections, i.e.\ $\bc(D^\infty (T)):=\sigma \big( \big\{ \{ x\in D^\infty (T)\mid (x(t_1),\ldots, x(t_d))\in A\},(t_i)_{1\leq i\leq d}\subset T,$ $ A\in\bc\big(\bR^d\big), d\in\N \big\}\big)$. Note that this $\sigma$-algebra is a Borel $\sigma$-algebra, since it can be generated as the Borel $\sigma$-algebra of a topology on $D^\infty(T)$. Similarly, $D(T)$ is equipped with the subspace (Borel) $\sigma$-algebra $\bc(D(T)):=\bc(D^\infty(T))\cap D(T)$. The function $0_{D^\infty(T)}:=(0)_{t\in T}$ denotes the function which vanishes everywhere. A c\`adl\`ag process indexed by $T$ denotes a random element $H\in D^\infty(T)$. Sometimes $H$ is also referred to as $(H_t)_{t\in T}$ to emphasize the stochastic process character. $H_t$ refers to the extended real-valued random variable obtained by projecting $H$ at ``time'' $t$.

We write $X\sim Y$ to denote that two random elements $X$ and $Y$ are identical in distribution, even though $X$ and $Y$ do not need to be defined on the same probability space. If the probability space is not explicitly specified, we adopt the usual notation and denote the probability measure as $\pp$. The distribution function $F$ of a random vector $\bmX_d\in\bR^d$ is defined as $F(\bmx):=\pp\lc \bmX_d\leq \bmx \rc $. The survival function $\fb$ of $\bmX_d$ is defined as $\fb(\bmx):=\pp\lc \bmX_d\in \times_{i=1}^d \{x_i,\infty]\rc$, where ``$\{$'' is interpreted as ``$($'' if $x_i>-\infty$ and ``$\{$'' is interpreted as ``$[$'' if $x_i=-\infty$. We frequently write $\bmX_d\sim F$ (resp.\ $\bmX_d\sim\fb$) to denote that the random vector $\bmX_d$ has distribution (resp.\ survival) function $F$ (resp.\ $\fb$). A random variable has exponential distribution with mean $1/\lambda\geq 0$ if it has survival function $\fb(x)=\exp\lc-\lambda x\id_{[0,\infty)}(x)\rc$.
The terms min- (resp.\ max-)id random vector and min- (resp.\ max-)id distribution will be used synonimously, depending on whether we refer to a random vector or its associated distribution. 

In slight abuse of the common terminology we say that a measure $\mu$ is supported on a set $A\subset \bc\lc \bR^d\rc$ if $\mu\lc \bR^d\setminus A\rc=0$.

\subsection{Exchangeable min- and max-id distributions}
\label{subsectionminidmaxiddistributions}
First, let us recall the definition of exchangeable and extendible random vectors and exchangeable sequences.

\begin{defn}[Exchangeable and extendible random vector/sequence] 
\label{definitionexchangeableextendible}
\begin{enumerate}
    \item A random vector $\bmX_d=(X_{d,1},\ldots,X_{d,d})\in\bR^d$ is exchangeable if $\bmX_d\sim \big(X_{d,\pi(1)},\ldots$ $,X_{d,\pi(d)}\big)$ for all permutations $\pi$ on $\{1,\ldots,d\}$. A random sequence $\bmX=\lc X_i\rc_{i\in\N}\in\bR^\N$ is exchangeable if all its finite dimensional marginal distributions are exchangeable.
    \item An exchangeable random vector $\bmX_d=(X_{d,1},\ldots,X_{d,d})\in\bR^d$ is extendible if there exists an exchangeable sequence $\bmX=\lc X_i\rc_{i\in\N}\in\bR^\N$ (possibly defined on a different probability space) satisfying $\bmX_d\sim ( X_1,\ldots, X_d)$. 
\end{enumerate}
\end{defn}
Extendible random vectors form a proper subclass of exchangeable random vectors, since there exist many exchangeable random vectors which are not extendible. For example, consider a random vector $\bmX_2\in\RR^2$ which follows a bivariate normal distribution with negative correlation.\footnote{Another example being an Archimedean copula with a $d$-monotone generator that is not completely monotone.} \cite[p.\ 7]{aldousexchangeable} shows that exchangeable sequences of random variables necessarily have non-negative correlation. Therefore, $\bmX_2$ is exchangeable but not extendible to an exchangeable sequence. It is also worth noting that an extendible random vector $\bmX$ may be extendible to more than one exchangeable sequence. On the other side, de Finetti's Theorem provides a unique stochastic representation of exchangeable sequences and thus also characterizes extendible random vectors.

\begin{thm}[De Finetti {\cite[Chapter 1.3]{aldousexchangeable}}]
A sequence $\bm X=(X_i)_{i\in\N} \in\bR^\N$ is exchangeable if and only if there exists an nnnd stochastic process $H\in D^\infty(\RR)$ such that 
\begin{align}
    (X_i)_{i\in\N}\sim \big( \inf\{t\in\RR\mid H_t\geq E_i\}\big)_{i\in\N},  \label{constructionexchangeseq}
\end{align}
where $(E_i)_{i\in\N}$ is a sequence of i.i.d.\ unit exponential random variables independent of $H$. 
$\bmX$ has survival function
$$  \pp\lc X_1>t_1,X_2>t_2,\ldots \rc=\ee\lk \prod_{i\in\N} e^{-H_{t_i}} \rk \ , \ \bmt=(t_1,t_2,\ldots)\in[-\infty,\infty)^\N .$$
Moreover, the distribution of the process $H$ is uniquely determined by the distribution of $\bmX$ and vice versa.
\end{thm}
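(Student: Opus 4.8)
The plan is to derive this statement from the classical de Finetti theorem in its random‑measure form, which is precisely the form cited in \cite[Chapter 1.3]{aldousexchangeable}: a sequence $\bmX=(X_i)_{i\in\N}\in\bR^\N$ is exchangeable if and only if there is a random probability measure $M$ on the compact Polish space $\bR$ such that, conditionally on $M$, the $X_i$ are i.i.d.\ with law $M$, and in that case $\mathcal L(M)$ is uniquely determined by $\mathcal L(\bmX)$. The remaining work is to (i) identify random probability measures on $\bR$ with nnnd c\`adl\`ag processes, and (ii) realise a conditionally‑i.i.d.\ sequence via the first‑passage representation in \eqref{constructionexchangeseq}.

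First I would set up the correspondence between the two objects. Given a random probability measure $M$ on $\bR$, put $H_t:=-\log M\big((t,\infty]\big)$ for $t\in\RR$; since $t\mapsto M((t,\infty])$ is $[0,1]$‑valued, non‑increasing and right‑continuous, $H$ is non‑negative, non‑decreasing and right‑continuous, and left limits are automatic for monotone functions, so $H\in D^\infty(\RR)$, with $t\mapsto H_t$ measurable in $M$. Conversely, any nnnd $h\in D^\infty(\RR)$ induces a probability measure $M_h$ on $\bR$ via $M_h((s,t])=e^{-h(s)}-e^{-h(t)}$, $M_h(\{-\infty\})=1-\lim_{t\to-\infty}e^{-h(t)}$ and $M_h(\{+\infty\})=\lim_{t\to\infty}e^{-h(t)}$, and $h\mapsto M_h$ inverts $M\mapsto H$; both maps are measurable, so $\sigma(H)=\sigma(M)$.

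The computational core is the pathwise identity: for a fixed nnnd c\`adl\`ag $h\colon\RR\to[0,\infty]$ and a unit exponential $E$, the first‑passage time $\tau_h(E):=\inf\{t\in\RR\mid h(t)\ge E\}$ has distribution $M_h$. Monotonicity and right‑continuity of $h$ make $\{t: h(t)\ge E\}$ a closed up‑set, so for $u\in\RR$ one has $\{\tau_h(E)>u\}=\{h(t)<E\ \forall t\le u\}=\{h(u)<E\}$, hence $\pp(\tau_h(E)>u)=e^{-h(u)}$; the same argument at $u=\mp\infty$ (using that $E$ has no atoms) gives $\pp(\tau_h(E)=-\infty)=1-\lim_{t\to-\infty}e^{-h(t)}$ and $\pp(\tau_h(E)=+\infty)=\lim_{t\to\infty}e^{-h(t)}$, so $\tau_h(E)\sim M_h$. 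Measurability of $(h,E)\mapsto\tau_h(E)$ follows from $\tau_h(E)=\inf\{q\in\QQ\mid h(q)\ge E\}$, valid because $h$ is non‑decreasing and right‑continuous.

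With these pieces the theorem assembles quickly. For the ``if'' direction: if $\bmX\sim(\tau_H(E_i))_{i\in\N}$ with the $E_i$ i.i.d.\ unit exponentials independent of $H$, then conditionally on $H$ the $\tau_H(E_i)$ are i.i.d.\ (measurable functions of the i.i.d.\ $E_i$ and the common $H$), so $(\tau_H(E_i))_i$ and hence $\bmX$ is exchangeable. For the ``only if'' direction: take $M$ from classical de Finetti and set $H:=(-\log M((t,\infty]))_{t\in\RR}$; conditionally on $H$ the sequence $(\tau_H(E_i))_i$ is i.i.d.\ with law $M_H=M$ by the pathwise identity, while conditionally on $M$ the original $\bmX$ is i.i.d.\ with law $M$, and since $\sigma(H)=\sigma(M)$ both sequences have joint law $\int\nu^{\otimes\N}\,\mathcal L(M)(d\nu)$, which is exactly \eqref{constructionexchangeseq}. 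The survival‑function formula then follows from conditional independence and the pathwise identity, $\pp\big(\bigcap_{i\in\N}\{X_i>t_i\}\big)=\ee\big[\prod_{i\in\N}\pp(X_i>t_i\mid H)\big]=\ee\big[\prod_{i\in\N}e^{-H_{t_i}}\big]$ for $\bmt\in[-\infty,\infty)^\N$ (with $H_{-\infty}:=\lim_{t\to-\infty}H_t$), the interchange being dominated/monotone convergence in $i$. Uniqueness is inherited: $\mathcal L(H)$ determines all finite‑dimensional survival functions, hence $\mathcal L(\bmX)$, and conversely $\mathcal L(\bmX)$ determines $\mathcal L(M)$ by the uniqueness in classical de Finetti together with the measurable bijection $M\leftrightarrow H$. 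I expect the only genuinely delicate points to be purely technical: handling the possible atoms of $M$ at $\pm\infty$ (equivalently the finiteness or not of $\lim_{t\to\pm\infty}H_t$) in the bijection, and the extended‑real‑valued first‑passage computation; everything else is Fubini and monotone convergence.
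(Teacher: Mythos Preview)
The paper does not actually supply a proof of this statement: it is presented as a reformulation of the classical de~Finetti theorem and attributed to \cite[Chapter~1.3]{aldousexchangeable} without further argument. So there is no ``paper's own proof'' to compare against; the authors treat the translation from the random-measure form of de~Finetti to the first-passage/hazard-process form as folklore.

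Your proposal is correct and is precisely the natural way to effect this translation. You correctly isolate the two substantive steps: the measurable bijection between random probability measures $M$ on $\bR$ and nnnd c\`adl\`ag processes $H$ via $H_t=-\log M\big((t,\infty]\big)$, and the pathwise computation showing that $\inf\{t:h(t)\ge E\}$ has conditional survival function $e^{-h(\cdot)}$ when $E$ is unit exponential. The uniqueness argument via the classical de~Finetti uniqueness plus the bijection is the right one, and your treatment of the boundary cases at $\pm\infty$ is adequate. If anything, you have been more careful than the paper, which simply asserts the result.
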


De Finetti's Theorem can be refined for exchangeable sequences with continuous marginal distribution.

\begin{cor}[Exchangeable sequences with continuous marginal distribution correspond to stochastically continuous c\`adl\`ag processes]
\label{corcontdistrcorresstochcontchron}
The sequence $\bmX\in \bR^\N$ from Equation (\ref{constructionexchangeseq}) has continuous marginal distributions if and only if $H$ is stochastically continuous.
\end{cor}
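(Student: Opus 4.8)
The plan is to exploit the conditionally i.i.d.\ structure from Equation (\ref{constructionexchangeseq}) to express the marginal survival function of $X_1$ as $\pp(X_1>t)=\ee[e^{-H_t}]$ and then to translate continuity of this function, jointly with monotonicity, into stochastic continuity of $H$. First I would record the marginal identity: plugging into (\ref{constructionexchangeseq}) (or using the survival-function formula with all but one $t_i$ sent to $-\infty$) gives $\fb_{X_1}(t)=\ee[e^{-H_t}]$ for $t\in[-\infty,\infty)$, and since $t\mapsto e^{-H_t}$ is non-increasing and right-continuous pathwise, $\fb_{X_1}$ is non-increasing and right-continuous. Continuity of the marginal distribution of $X_1$ is then equivalent to continuity of $\fb_{X_1}$, i.e.\ to left-continuity of $t\mapsto \ee[e^{-H_t}]$ at every point.

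The forward direction ($H$ stochastically continuous $\Rightarrow$ continuous margins) is the easy one: if $H_{t_n}\to H_t$ in probability as $t_n\uparrow t$, then along a subsequence $H_{t_n}\to H_t$ a.s., and since the paths are non-decreasing the full sequence $H_{t_n}$ converges a.s.\ (monotone sequences converging along a subsequence converge), so $e^{-H_{t_n}}\to e^{-H_t}$ a.s.; bounded convergence gives $\ee[e^{-H_{t_n}}]\to\ee[e^{-H_t}]$, hence $\fb_{X_1}$ is left-continuous, hence continuous. One must also handle $t=-\infty$ using the a.s.\ limit $H_{-\infty}=\lim_{t\to-\infty}H_t$, but this is routine.

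The converse is where the real work lies, and it is the step I expect to be the main obstacle. Suppose $\fb_{X_1}$ is continuous; I must show $H_{t_n}\to H_t$ in probability for every $t_n\to t$. By monotonicity of the paths it suffices to treat one-sided limits: right-continuity of the paths already gives $H_{t_n}\to H_t$ a.s.\ for $t_n\downarrow t$, so the issue is $t_n\uparrow t$. Here I would use the pathwise left limit $H_{t-}:=\lim_{s\uparrow t}H_s$, which exists since paths are non-decreasing (and is the left limit of the c\`adl\`ag path), and note $H_{t-}\le H_t$ a.s. Monotone convergence (or bounded convergence) yields $\ee[e^{-H_{t-}}]=\lim_{s\uparrow t}\ee[e^{-H_s}]=\fb_{X_1}(t-)=\fb_{X_1}(t)=\ee[e^{-H_t}]$, the middle equality being exactly the assumed left-continuity. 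Since $e^{-H_{t-}}\ge e^{-H_t}$ a.s.\ and the two have equal expectation, they are a.s.\ equal, i.e.\ $H_{t-}=H_t$ a.s. Thus $H_{t_n}\to H_{t-}=H_t$ a.s.\ for $t_n\uparrow t$, which combined with the right-continuous case gives stochastic continuity at $t$. Finally I would check the point $t=-\infty$ separately: continuity of $\fb_{X_1}$ at $-\infty$ means $\fb_{X_1}(-\infty)=\lim_{t\to-\infty}\fb_{X_1}(t)$, i.e.\ $\ee[e^{-H_{-\infty}}]=\lim_{t\to-\infty}\ee[e^{-H_t}]$, and the same monotonicity-plus-equal-expectation argument forces $H_t\to H_{-\infty}$ a.s., completing the equivalence. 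The only subtlety to be careful about throughout is that $H$ takes values in $(-\infty,\infty]$, so $e^{-H_t}$ takes values in $[0,\infty)$ and all dominated/monotone convergence applications are legitimate with the constant dominating function $1$.
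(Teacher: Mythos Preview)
Your argument is correct. The forward direction is essentially identical to the paper's: both extract an a.s.\ convergent subsequence from stochastic continuity, upgrade to full-sequence a.s.\ convergence via monotonicity of the paths, and pass to expectations by bounded convergence. For the converse, however, you take a genuinely different and cleaner route than the paper. The paper argues by contradiction: assuming $H$ fails stochastic continuity at some $t$, it locates constants $q_1<q_2$ and $\eta>0$ with $\lim_{s\nearrow t}\pp(H_s<q_1,\,H_t\ge q_2)>\eta$ and then bounds $\pp(X_1=t)$ from below by $\eta\,\pp(E_1\in(q_1,q_2))>0$. Your approach is direct: continuity of $\fb_{X_1}$ forces $\ee[e^{-H_{t-}}]=\ee[e^{-H_t}]$, and since $e^{-H_{t-}}\ge e^{-H_t}$ a.s.\ with equal means, the two agree a.s., whence $H_{t-}=H_t$ a.s.\ by injectivity of $x\mapsto e^{-x}$ on $[0,\infty]$. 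This avoids the construction of auxiliary constants and events entirely. One minor slip: since $H$ is nnnd, $H_t\in[0,\infty]$ and hence $e^{-H_t}\in[0,1]$, not $[0,\infty)$; this is harmless for the dominated-convergence step.
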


\begin{proof}
The proof can be found in Appendix \ref{appendixproofs}.
\end{proof}

Next, we recall the most important results about min- and max-id distributions. We mainly follow \cite[Chapter 5]{resnickextreme} and translate the results from max-id random vectors on $\RR^d$ to min-id random vectors on $\minf^d$. Many of these translations are straightforward, but some lurking technical subtleties need to be considered and are emphasized in the upcoming paragraphs. We start with the formal definition of min- (resp.\ max-)id random vectors/sequences. There are at least two equivalent definitions that will be used frequently throughout the paper. Therefore, they are presented jointly.

\begin{defn}[Min-id distribution / Max-id distribution]
\label{definitionmaxidminid}

\begin{enumerate}
    \item A random vector $\bmX_d\in\bR^d$ is min-infinitely divisible (min-id) if for every $n\in\N$ there exist i.i.d.\ random vectors $(\bmX_d^{(i,1/n)})_{\leqn}$\footnote{The notation $\bmX_d^{(i,1/n)}$ shall emphasize that every $\bmX_d^{(i,1/n)}$ can be interpreted as a $1/n$ contribution to $\bmX$, since the $\bmX_d^{(i,1/n)}$ are all equally likely to contribute $\min_{\leqn} \bmX_d^{(i,1/n)}\sim\bmX_d$} such that $\bmX_d\sim \min_{\leqn} \bmX_d^{(i,1/n)}$. Equivalently, the survival function $\fb$ of a random vector $\bmX_d\in\bR^d$ is min-id if $\fb^{t}$ is a survival function for every $t>0$.
    \item A random vector $\bmX_d\in\bR^d$ is max-infinitely divisible (max-id) if for every $n\in\N$ there exist i.i.d.\ random vectors $(\bmX_d^{(i,1/n)})_{\leqn}$ such that $\bmX_d\sim \max_{\leqn} \bmX_d^{(i,1/n)}$. Equivalently, the distribution function $F$ of a random vector $\bmX_d\in\bR^d$ is max-id if $F^{t}$ is a distribution function for every $t>0$.
\end{enumerate}
Similarly, a sequence $\bmX\in \bR^\N$ is called min- (resp.\ max-)id if $(X_{i_1},\ldots,X_{i_d})$ is a min- (resp.\ max-)id random vector for every $(i_1,\ldots,i_d)\in \N^d$.

\end{defn}
Every univariate random variable $X\in\bR$ is min- and max-id. However, already for $d\geq 2$ it is certainly not trivial to decide whether a given random vector or survival (resp.\ distribution) function is min- (resp.\ max-)id.  For example, a bivariate normal distribution with negative correlation is not min- (resp.\ max-)id, since min- (resp.\ max-)id random vectors necessarily exhibit non-negative correlation \cite[Proposition 5.29]{resnickextreme}.

An important property of the class of min- and max-id distributions is their distributional closure under monotone marginal transformations. This fact is summarized in the following lemma, which is a slight extension of \cite[Proposition 5.2 iii)]{resnickextreme}.

\begin{lem}[Class of min- and max-id distributions is closed under monotone transformations]
\label{lemmatrafomaxtominid}
Let $\bmX_d\ (resp.\ \bm Y_d)\in\bR^d$ denote a min- (resp.\ max-)id random vector. The following statements are valid.
\begin{enumerate}
\item Let $(f_i)_{1\leq i \leq d}: \bR\to \bR$ be non-decreasing. Then $f(\bmX_d):=\big(f_1(X_1),\ldots,f_d(X_d)\big)$ $\big(resp.\ $ $ f(\bm Y_d):=\big(f_1(Y_1),\ldots,f_d(Y_d)\big)\big)$ is min- (resp.\ max-)id.
\item Let $(f_i)_{1\leq i \leq d}: \bR\to \bR$ be non-increasing. Then $f(\bmX_d)$ $(resp.\ f(\bm Y_d))$ is max- (resp.\ min-)id.
\end{enumerate}
\end{lem}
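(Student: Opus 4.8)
The plan is to argue directly from the first (i.i.d.-factorization) characterization in Definition~\ref{definitionmaxidminid}, rather than through survival/distribution functions: applying a general monotone $f_i$ does not give a clean closed form for the transformed survival function (generalized inverses, jumps), whereas the factorization definition interacts perfectly with componentwise $\min$ and $\max$. The one elementary ingredient is: for a non-decreasing $g:\bR\to\bR$ and any finite family $a_1,\dots,a_n\in\bR$ one has $g(\min_{1\le i\le n}a_i)=\min_{1\le i\le n}g(a_i)$ and $g(\max_{1\le i\le n}a_i)=\max_{1\le i\le n}g(a_i)$, while for a non-increasing $g$ the two operations are interchanged, $g(\min_{1\le i\le n}a_i)=\max_{1\le i\le n}g(a_i)$ and $g(\max_{1\le i\le n}a_i)=\min_{1\le i\le n}g(a_i)$; both follow at once from the fact that the extremum over a finite set is attained. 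Applying this coordinatewise to $f=(f_1,\dots,f_d)$ yields, as pointwise identities on $\bR^d$, $f(\min_{1\le i\le n}\bm a^{(i)})=\min_{1\le i\le n}f(\bm a^{(i)})$ (and likewise with $\max$) when all $f_i$ are non-decreasing, and the min/max swap when all $f_i$ are non-increasing.

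For statement (1) with $\bmX_d$ min-id, I would fix $n\in\N$ and pick i.i.d.\ vectors $(\bmX_d^{(i,1/n)})_{1\le i\le n}$ with $\bmX_d\sim\min_{1\le i\le n}\bmX_d^{(i,1/n)}$, which exist by Definition~\ref{definitionmaxidminid}. Each $f_i$ is monotone, hence Borel measurable, so $f$ is Borel measurable; therefore $(f(\bmX_d^{(i,1/n)}))_{1\le i\le n}$ is again an i.i.d.\ family and $f(\bmX_d)\sim f(\min_{1\le i\le n}\bmX_d^{(i,1/n)})$. Combining with the coordinatewise identity above gives $f(\bmX_d)\sim\min_{1\le i\le n}f(\bmX_d^{(i,1/n)})$, an $n$-fold minimum of i.i.d.\ vectors; since $n$ was arbitrary, $f(\bmX_d)$ is min-id. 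The max-id case of (1) is verbatim the same with every $\min$ replaced by $\max$. For statement (2) the same steps apply, but the min/max swap in the algebraic identity converts the $n$-fold minimum into an $n$-fold maximum (and conversely), so a min-id $\bmX_d$ is mapped to a max-id vector and a max-id $\bm Y_d$ to a min-id one, as claimed. Part (2) could alternatively be reduced to part (1) by composing with the non-increasing involution $x\mapsto -x$ on $\bR$, but the direct argument is just as short.

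The proof is essentially bookkeeping, and I do not expect a genuine obstacle; the only points deserving care are that (a) one never needs $f_i$ to be continuous or strictly monotone — monotonicity alone suffices, which is precisely where this goes beyond \cite[Proposition 5.2 iii)]{resnickextreme} — and (b) everything takes place in the extended space $\bR^d=\overline{\mathbb{R}}^d$, so one should note that a monotone function $\bR\to\bR$ has at most countably many discontinuities and is hence Borel measurable, and that the componentwise $\min/\max$ and the evaluation of $f_i$ at $\pm\infty$ are all well defined in $\bR$. Once these are observed, nothing else is required.
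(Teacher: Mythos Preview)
Your argument is correct. The paper does not actually supply a proof of this lemma; it merely notes that the statement is a slight extension of \cite[Proposition 5.2 iii)]{resnickextreme} and moves on. Your direct argument via the i.i.d.-factorization characterization in Definition~\ref{definitionmaxidminid}, together with the elementary identity $g(\min_i a_i)=\min_i g(a_i)$ (resp.\ $=\max_i g(a_i)$) for non-decreasing (resp.\ non-increasing) $g$, is exactly the natural way to fill in the omitted proof, and it handles non-strict and discontinuous $f_i$ without any need for generalized inverses.
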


Lemma~\ref{lemmatrafomaxtominid} shows that studying the entire class of min- and max-id distributions is equivalent to studying min-id distributions supported on an arbitrary non-empty subset of $A\subset\bR^d$, since for every min- or max-id random vector $\bmX_d$ there exist some strictly monotone functions $\lc f_i\rc_{1\leq i\leq d}$ such that $f(\bmX_d)\in A$ is min-id. A convenient choice for our analysis is the set $A=\minf^d$. This choice allows us to assume that a min-id distribution does not have mass on $\{\bmx\in\bR^d\mid x_i=-\infty  \text{ for some }1\leq i\leq d\}$. Such min-id distributions are uniquely determined by their survival function restricted to $\RR^d$. Thus, we can avoid the technical subtleties involving survival functions on $\bR^d$. The convenience of this seemingly artificial fact will become clear in Section~\ref{sectionextminidcorrespondstoextchron}, since the choice $\bmX\in\minf^\N$ allows us to restrict our attention to c\`adl\`ag processes vanishing at $-\infty$. Therefore, without loss of generality, we only consider min-id random vectors on $\minf^d$ in the remainder of the paper, if not explicitly mentioned otherwise.

Similar to \cite[Proposition 5.8]{resnickextreme}, we can characterize min-id distributions on $\minf^d$ by a so-called exponent measure. To define the exponent measure of a min-id distribution we need to introduce some notation. For $\bell  \in(-\infty,\infty]^d$ define the set $E^d_\bell :=[-\binfty,\bell]\setminus \{\bell\}$, which is equipped with the subspace topology inherited from $\bR^d$. Furthermore, for $\bmx<\bell$, define $(\bmx,\binfty]^\complement:=E^d_\bell \setminus (\bmx,\binfty]$. 

\begin{defn}[Exponent measure of min-id distributions]
\label{definitionexponentmeasure}
A Radon measure $\mu_d$ on $E^d_\bell$ is called exponent measure if it satisfies
$$\mu_d\lc \bigcup_{i=1}^d \big\{\bmx\in E^d_\bell \mid x_i=-\infty\big\}\rc=0.$$
\end{defn}

This definition of an exponent measure ensures that $\bmX_d\sim \exp\lc-\mu_d\lc (\cdot,\binfty]^\complement\rc\rc$ is min-id on $(-\infty,\infty]^d$, since \begin{align*}
    &\pp(X_i=-\infty \text{ for some }1\leq i\leq d)\\
    &=1-\exp\lc- \mu_d\lc \bigcup_{i=1}^d \{\bmx\in E^d_\bell \mid x_i=-\infty\}\rc\rc= 0.
\end{align*} 
The next proposition, which is similar to \cite[Proposition 5.8]{resnickextreme}, shows that every min-id random vector has a survival function of the form $\exp\lc -\mu_d(\cdot,\binfty]^\complement\rc$.

\begin{prop}[Characterization of min-id distributions]
\label{propositionexponentmeasureofminid}
The following are equivalent:
\begin{enumerate}
\item $\bmX_d\in \minf^d$ is min-id.
\item There exist $\bell\in \minf^d$ and an exponent measure $\mu_d$ defined on $E^d_\bell$ such that 
$$\bmX_d\sim \fb(\bmx)=\begin{cases}
\exp\bigg(-\mu_d\bigg( (\bmx,\binfty]^\complement\bigg)\bigg) & \bmx<\bell, \\
0 & else.
\end{cases} $$
\end{enumerate}
Moreover, the exponent measure $\mu_d$ associated with $\bmX_d$ is unique.
\end{prop}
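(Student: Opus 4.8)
The plan is to transcribe, to the min-id setting on $\minf^d$, the proof of the analogous characterization of max-id laws in \cite[Proposition~5.8]{resnickextreme}, the work being carried out by Poisson random measures on the locally compact, second countable Hausdorff space $E^d_\bell$. For a min-id $\bmX_d$ we let $\bell\in\minf^d$ be the coordinatewise upper endpoint, $\ell_j:=\sup\{x\in\RR:\pp(X_j>x)>0\}$ (finite or $+\infty$ since $X_j>-\infty$), and we record two elementary facts that feed both implications: (a) $\bmX_d\sim\min_{\leqn}\bmX_d^{(i,1/n)}$ with $\bmX_d\in\minf^d$ forces $\pp(X^{(1,1/n)}_j=-\infty)=0$, so the i.i.d.\ $1/n$-contributions of Definition~\ref{definitionmaxidminid} again take values in $\minf^d$; and (b) min-idness forces $\fb(\bmx)>0$ exactly for $\bmx<\bell$ — as $t\downarrow0$ the pointwise increasing limit of $\fb^t$ is the survival function of a random vector with degenerate marginals, hence of the constant $\bell$, which together with the monotonicity of $\fb$ pins down $\{\fb>0\}=\{\bmx<\bell\}$.

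For the implication (2)$\Rightarrow$(1): an exponent measure $\mu_d$ on $E^d_\bell$ is $\sigma$-finite (Radon on a $\sigma$-compact space), so there is a Poisson random measure $\xi=\sum_k\delta_{\bm Z_k}$ on $E^d_\bell$ with intensity $\mu_d$; put $X_j:=\inf_k(\bm Z_k)_j$ with $\inf\emptyset:=\infty$. Since $\mu_d$ vanishes on each face $\{\bm z:z_j=-\infty\}$ and is finite on the sets $\{\bm z:z_j\le -t\}$ for $t$ large, one gets $X_j>-\infty$ a.s., so $\bmX_d\in\minf^d$; moreover $\bmX_d\le\bell$ always, and the event identity $\{\bmX_d>\bmx\}=\{\xi((\bmx,\binfty]^\complement)=0\}$ gives $\pp(\bmX_d>\bmx)=\exp(-\mu_d((\bmx,\binfty]^\complement))$ for $\bmx<\bell$ and $=0$ otherwise, i.e.\ a survival function of the asserted form. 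Min-infinite divisibility is then immediate, since $\fb^t=\exp(-t\,\mu_d((\cdot,\binfty]^\complement))$ and $t\mu_d$ is again an exponent measure for every $t>0$.

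The implication (1)$\Rightarrow$(2) is the substantial direction. For each $n$ take the i.i.d.\ $\bmX_d^{(i,1/n)}$, $\leqn$, with $\bmX_d\sim\min_i\bmX_d^{(i,1/n)}$ (so each has survival function $\fb^{1/n}$) and form the empirical point processes $\xi_n:=\sum_{i=1}^n\delta_{\bmX_d^{(i,1/n)}}$ on $E^d_\bell$ (the atom $\bell$, of vanishing probability, discarded). The core step — which I expect to be the only genuinely delicate one — is to prove $\xi_n\Rightarrow\xi$ for a Poisson random measure $\xi$ on $E^d_\bell$, using the standard criterion for weak convergence of point processes on such spaces \cite[Chapters~3 and~5]{resnickextreme}: one checks that $\ee[\xi_n(B)]=n(1-\fb^{1/n}(\cdot))$ converges to a finite limit and that the avoidance probabilities $\pp(\xi_n(B)=0)$ converge to the matching Poisson value, for $B$ in a convergence-determining class of relatively compact Borel subsets of $E^d_\bell$ (e.g.\ finite unions of sets $(\bmx,\binfty]^\complement\cap[-\binfty,\bm y]$ with $\bmx<\bm y<\bell$). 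The expansion $1-\fb^{1/n}(\bmx)=-\tfrac1n\log\fb(\bmx)+o(1/n)$ on $\{\bmx<\bell\}$, legitimate by (b), makes both limits explicit and identifies the intensity $\mu_d$ via $\mu_d((\bmx,\binfty]^\complement)=-\log\fb(\bmx)$ (in fact $\pp(\xi_n((\bmx,\binfty]^\complement)=0)=\fb(\bmx)$ already for finite $n$). The limiting PRM furnishes a Radon measure $\mu_d$ on $E^d_\bell$; it vanishes on the faces $\{\bm z:z_j=-\infty\}$ because $\ee[\xi_n(\{z_j=-\infty\})]=n\,\pp(X^{(1,1/n)}_j=-\infty)=0$ by (a), so $\mu_d$ is a genuine exponent measure, and $\pp(\bmX_d>\bmx)=\lim_n\pp(\xi_n((\bmx,\binfty]^\complement)=0)=e^{-\mu_d((\bmx,\binfty]^\complement)}$ for $\bmx<\bell$, which together with (b) is the desired representation. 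The topological bookkeeping near the excluded corner $\bell$ and near the faces at $\pm\infty$ (relative compactness of the test sets, Radonness of the limit intensity) is the one place requiring real care; everything else is a faithful transcription of the max-id theory with the roles of $\max,\le$ and $\min,\ge$ interchanged.

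Finally, uniqueness: if $\mu_d$ and $\nu_d$ are exponent measures on $E^d_\bell$ with $\mu_d((\bmx,\binfty]^\complement)=\nu_d((\bmx,\binfty]^\complement)$ for all $\bmx<\bell$ (the equality forced by equating the two survival-function representations), then writing $(\bmx,\binfty]^\complement=\bigcup_{j=1}^d\{\bm z:z_j\le x_j\}$ and using inclusion--exclusion recovers the common value on every rectangle $(\bm a,\bm b]$ with $\bm b<\bell$; such rectangles form a $\pi$-system generating $\bc(E^d_\bell)$ on which the two $\sigma$-finite measures agree, whence $\mu_d=\nu_d$.
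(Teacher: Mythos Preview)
Your proposal is correct and follows essentially the same approach as the paper, which simply states that the proof is a translation of \cite[Proposition~5.8]{resnickextreme} to the min-id case without providing further details. Your write-up in fact supplies precisely the kind of transcription the paper defers to Resnick for --- the Poisson random measure construction for $(2)\Rightarrow(1)$, the weak convergence of empirical point processes $\xi_n\Rightarrow\xi$ for $(1)\Rightarrow(2)$, and the $\pi$-system argument for uniqueness --- so there is nothing to add.
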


\begin{proof}
The proof is a translation of the proof of \cite[Proposition 5.8]{resnickextreme} to the min-id case.
\end{proof}

We have already mentioned that it is sufficient to restrict our study of min-id $\bmX_d\in\bR^d$ to that of min-id $\bmX_d\in(-\infty,\infty]^d$. Since $E^d_\bell$ is almost rectangular, we can restrict the study of min-id distributions on $\minf^d$ to an even smaller and more convenient class of min-id distributions, because Lemma \ref{lemmatrafomaxtominid} allows to transform every min-id random vector $\bmX_d$ to a min-id random vector $\tilde{\bmX}_d$ which satisfies $\bell=\times_{i=1}^d \{ \infty\}$. Thus, from now on, we always assume that a min-id random vector $\bmX_d$ satisfies the following condition, if not explicitly mentioned otherwise.

\begin{cond}
\label{conditionrestrictionminiddistr}
A min-id random vector $\bmX_d\in\bR^d$ satisfies Condition $(\Diamond)$ if the following two assumptions are satisfied
\begin{enumerate}
    \item[$(\Diamond 1)$] $\bmX_d\in\minf^d$ and 
    \item[$(\Diamond 2)$] $\bell=\binfty$.
\end{enumerate}
\end{cond}


The purpose of Condition $(\Diamond)$ is to simplify our exposition in the remainder of the paper. It ensures that we can always assume that a min-id random vector $\bmX_d$ satisfies $\pp(\bmX_d>\bmx)>0$ for all $\bmx\in\RR^d$ and $\pp(X_i>-\infty )=1$ for all $1\leq i\leq d$. These constraints allow to avoid splitting the proofs of our main theorems into several cases.


Up to this point we have mainly introduced and reformulated existing results. In the following, we characterize exchangeable and extendible min-id random vectors, which have not yet been discussed in the literature. 

A natural question is whether the exchangeability (resp.\ extendibility) of the min-id random vector $\bmX_d$ is equivalent to the exchangeability (resp.\ extendibility) of its associated exponent measure $\mu_d$. To this purpose, let us properly define exchangeability of an exponent measure. For any $\bmx\in\bR^d$ and permutation $\pi$ on $\{1,\ldots,d\}$ define $\pi(\bmx)=(x_{\pi(1)},\ldots,x_{\pi(d)})$. Similarly, for any set $A\subset \bR^d$, define $\pi(A):=\{\pi(\bmx)\mid \bmx\in A\}$. An exponent measure $\mu_d$ on $E^d_\binfty$ is called exchangeable if $\mu_d(A)=\mu_d(\pi(A))$ for every $A\in \bc\lc E^d_\binfty\rc$ and every permutation $\pi$ on $\{1,\ldots,d\}$.

\begin{prop}[Exchangeable exponent measure]
\label{propositionexchangeabilityexponentmeasure}
The following are equivalent for every min-id random vector $\bmX_d\in\minf^d$:
\begin{enumerate}
\item $\bmX_d$ is exchangeable.
\item $\mu_d$ is exchangeable. 
\end{enumerate}
\end{prop}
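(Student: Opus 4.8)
The plan is to exploit the uniqueness of the exponent measure established in Proposition~\ref{propositionexponentmeasureofminid}, which converts the distributional equality characterizing exchangeability into a genuine equality of measures. First I would recall that, by Proposition~\ref{propositionexponentmeasureofminid}, the survival function of $\bmX_d$ is $\fb(\bmx)=\exp\lc-\mu_d\lc(\bmx,\binfty]^\complement\rc\rc$ for $\bmx\in\RR^d$, and that the exponent measure is uniquely determined by the survival function. For a fixed permutation $\pi$, the random vector $\pi(\bmX_d)$ is again min-id (it is obtained from $\bmX_d$ by reordering coordinates, hence trivially min-id), and an elementary computation shows that its survival function is $\bmx\mapsto\fb(\pi^{-1}(\bmx))=\exp\lc-\mu_d\lc(\pi^{-1}(\bmx),\binfty]^\complement\rc\rc$. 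On the other hand, the pushforward measure $\mu_d\circ\pi^{-1}$ is itself an exponent measure on $E^d_\binfty$ (the defining support condition in Definition~\ref{definitionexponentmeasure} is permutation-invariant, and Radonness is preserved under the homeomorphism $\pi$), and since $(\pi^{-1}(\bmx),\binfty]^\complement=\pi\lc(\bmx,\binfty]^\complement\rc$ one checks that the survival function built from $\mu_d\circ\pi^{-1}$ coincides with that built from $\mu_d$ composed with $\pi^{-1}$. Hence $\mu_d\circ\pi^{-1}$ is the exponent measure of $\pi(\bmX_d)$.

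With this in hand the equivalence is almost immediate. If $\bmX_d$ is exchangeable, then $\pi(\bmX_d)\sim\bmX_d$ for every $\pi$, so both $\mu_d\circ\pi^{-1}$ and $\mu_d$ are exponent measures of the same min-id law; by the uniqueness clause of Proposition~\ref{propositionexponentmeasureofminid} we get $\mu_d\circ\pi^{-1}=\mu_d$, i.e.\ $\mu_d(\pi(A))=\mu_d(A)$ for all $A\in\bc\lc E^d_\binfty\rc$ — applying this to $\pi^{-1}$ in place of $\pi$ gives exactly the definition of an exchangeable exponent measure. Conversely, if $\mu_d$ is exchangeable, then $\mu_d\circ\pi^{-1}=\mu_d$, so the survival function of $\pi(\bmX_d)$ equals that of $\bmX_d$ for every $\pi$; since a min-id distribution on $\minf^d$ is determined by its survival function on $\RR^d$ (here Condition~$(\Diamond)$ is convenient, guaranteeing no mass escapes to coordinates $-\infty$), we conclude $\pi(\bmX_d)\sim\bmX_d$, i.e.\ $\bmX_d$ is exchangeable.

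The only genuinely technical point — and the step I would be most careful about — is the bookkeeping identity $(\pi^{-1}(\bmx),\binfty]^\complement=\pi\lc(\bmx,\binfty]^\complement\rc$ together with the verification that pushing an exponent measure forward by a coordinate permutation again yields an exponent measure in the precise sense of Definition~\ref{definitionexponentmeasure} (Radon on $E^d_\binfty$, no mass on the ``faces at $-\infty$''). Both are routine but must be stated cleanly, because the whole argument is really just ``uniqueness of $\mu_d$'' plus ``$\pi$ acts compatibly on the relevant sets''. I would phrase the proof so that it works verbatim for both directions by noting that $\bmX_d$ exchangeable $\iff$ $\pi(\bmX_d)\sim\bmX_d$ for all $\pi$ $\iff$ $\mu_d\circ\pi^{-1}=\mu_d$ for all $\pi$ $\iff$ $\mu_d$ exchangeable, with the middle equivalence supplied by uniqueness of the exponent measure in each direction.
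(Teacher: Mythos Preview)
Your argument is correct and is genuinely different from---and considerably shorter than---the paper's own proof. The paper does \emph{not} invoke the uniqueness clause of Proposition~\ref{propositionexponentmeasureofminid} as a black box. Instead, for ``$\Leftarrow$'' it runs a Dynkin-system argument directly on the probability law of $\bmX_d$, and for ``$\Rightarrow$'' it distinguishes the cases $\mu_d(E^d_\binfty)<\infty$ and $\mu_d(E^d_\binfty)=\infty$: in the finite case a Dynkin-system argument on $\mu_d$ works verbatim, while in the infinite case the authors approximate $\mu_d$ by the finite truncations $\mu^{(n)}(\cdot)=\mu_d(\cdot\cap\{(c_n,\infty]^d\}^\complement)$, show each $\mu^{(n)}$ is exchangeable by the same Dynkin-system reasoning, and pass to the limit. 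The caveat the paper flags just before the proposition---that exchangeability of $\mu_d$ on the generating sets $(\bmx,\binfty]^\complement$ is \emph{not} by itself enough for exchangeability of an infinite measure---is what motivates this more hands-on route.

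Your proof sidesteps that concern entirely: once you have verified that $\mu_d\circ\pi^{-1}$ is again an exponent measure on $E^d_\binfty$ (Radonness via the homeomorphism $\pi$, no mass on the $-\infty$ faces because their union is permutation-invariant) and that it reproduces the survival function of $\pi(\bmX_d)$, the uniqueness in Proposition~\ref{propositionexponentmeasureofminid} does all the work, infinite mass or not. What the paper's longer approach buys is self-containment (it does not lean on the proof of uniqueness, which itself relies on a $\sigma$-finite extension argument) and a reusable template: the same truncation-plus-Dynkin machinery is invoked again later, e.g.\ in the proof of Theorem~\ref{theoremexponentmeasureisiidmixture}. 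Your approach is cleaner for this proposition in isolation; just make sure to state explicitly that $\pi(E^d_\binfty)=E^d_\binfty$ (so the pushforward lives on the right space) and record the identity $\pi^{-1}\lc(\bmx,\binfty]^\complement\rc=(\pi^{-1}(\bmx),\binfty]^\complement$, which the paper proves separately as part of its Lemma~\ref{permutationlemma}.
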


It may seem obvious that $\mu_d$ is exchangeable if $\bmX_d$ is exchangeable, since Proposition~\ref{propositionexponentmeasureofminid} implies exchangeability of $\mu_d$ on sets of the type $(\bmx,\binfty]^\complement$. However, $\mu_d$ may have infinite mass and exchangeability on sets of the type $(\bmx,\binfty]^\complement$ is not a sufficient criterion for exchangeability of infinite measures in general.

\begin{proof}
The proof can be found in Appendix \ref{appendixproofs}.
\end{proof}

Proposition~\ref{propositionexchangeabilityexponentmeasure} shows that the exchangeability of $\bmX_d$ is in one-to-one correspondence with the exchangeability of $\mu_d$. A similar statement holds for the extendibility of $\bmX_d$, but the precise formulation of this result is tedious. The problem arises from the definition of $E^d_\binfty$, since $E^d_\binfty$ is not a product space and this may lead to projections to the point $\binfty\not\in E^{d^\prime}_\binfty$, $d^\prime< d$. 
Vatan has investigated this problem in the context of exponent measures of max-id sequences in \cite{vatanmaxidinfinitedim}. Unfortunately, his definition of an exponent measure slightly differs from the original definition in \cite{resnickextreme} and does not directly translate to Definition~\ref{definitionexponentmeasure}. Vatan puts infinite mass on the lower boundary $-\bell$ of the support of the exponent measure of a max-id sequence $-\bmX$ to ensure that exponent measures are projective. Under this constraint he proved the existence of a unique projective exponent measure on $[-\bell,\binfty]:=\times_{i\in\N}[-\ell_i,\infty]$ associated with $-\bmX$. Removing the point $-\bell$ from the support of Vatan's exponent measure allows us to translate his results to our setting and we obtain the exponent measure of a min-id sequence $\bmX$. Note that this modified exponent measure is generally not projective, which is due to the removal of $-\bell$. We denote this global exponent measure on $E^\N_\bell:=[-\binfty,\bell]\setminus \{\bell\}$ by $\mu$. 
Similar to the study of min-id random vectors we can restrict the study of min-id sequences to min-id sequences whose $d$-dimensional margins satisfy Condition $(\Diamond)$. Thus, from now on we will assume that the $d$-dimensional margins of a min-id sequence satisfy Condition $(\Diamond)$, i.e.\ $\bmX\in(-\infty,\infty]^\N$ and $\bell=\binfty$.\\
Define $\mu_{i_1,\ldots,i_d}$ as the unique exponent measure of the $d$-dimensional margin $(X_{i_1},\ldots,X_{i_d})$ of the min-id sequence $\bmX$. The results of \cite{vatanmaxidinfinitedim} are summarized in the following proposition and clarify the projective properties of extendible exponent measures $\mu_d$. 



\begin{prop}[\cite{vatanmaxidinfinitedim}, Exponent measure of a sequence]
\label{propositionextendibleexponentmeasure}
The following are equivalent for every min-id random vector $\bmX_d\in\minf^d$ satisfying Condition $(\Diamond)$:
\begin{enumerate}
\item $\bmX_d$ is extendible to a min-id sequence $\bmX$.
\item There exists a unique exchangeable global exponent measure $\mu$ on $E^\N_\binfty$ such that
$$ \mu\lc \{\bmx\in E^\N_\binfty\mid (x_{i_1},\ldots,x_{i_d})\in A\}\rc= \mu_{i_1,\ldots,i_d}(A)=\mu_d(A), $$
for all distinct $(i_1,\ldots ,i_d)\in \N^d$ and $A\in \bc\lc E^d_\binfty\rc$. 
\end{enumerate}
\end{prop}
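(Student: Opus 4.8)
The plan is to deduce the statement from Vatan's theorem \cite{vatanmaxidinfinitedim} for max-id sequences via the reflection $\bmx\mapsto-\bmx$ (Lemma~\ref{lemmatrafomaxtominid}), and then to carefully excise the single ``boundary point'' $\binfty$, which under Condition $(\Diamond)$ is precisely the reflected lower endpoint $-\bell=-\binfty$ on which Vatan loads infinite mass to enforce projectivity. Before doing so I would record that Condition $(\Diamond)$ propagates along extensions: if $\bmX=(X_i)_{i\in\N}$ is an exchangeable min-id extension of $\bmX_d$, then by exchangeability every $d$-dimensional margin of $\bmX$ has the law of $\bmX_d$, and, writing $\bmX$ in the de~Finetti form above, $\pp(X_1>t_1,\ldots,X_n>t_n)=\ee[\exp(-\sum_{i=1}^nH_{t_i})]$ is positive for every $\bmt\in\RR^n$ because $H$ is non-decreasing and $\pp(H_s<\infty)>0$ for every $s\in\RR$ (the latter since $\ee[\exp(-H_s)]=\pp(X_1>s)>0$ by $(\Diamond)$). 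Hence \emph{every} finite-dimensional margin of $\bmX$ satisfies $(\Diamond)$ and carries a genuine exponent measure $\mu_{i_1,\ldots,i_n}$ on $E^n_\binfty$ (Proposition~\ref{propositionexponentmeasureofminid}), with $\mu_{i_1,\ldots,i_d}=\mu_d$.

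For $(1)\Rightarrow(2)$ I would apply Vatan's theorem to the max-id sequence $-\bmX$. It produces a unique exchangeable, projective exponent measure $\nu$ on $\times_{i\in\N}[-\infty,\infty]$ whose finite-dimensional projections are the Vatan exponent measures of $(-X_{i_1},\ldots,-X_{i_n})$, i.e.\ the reflection of $\mu_{i_1,\ldots,i_n}$ together with an infinite-mass term concentrated at $\binfty$. Let $\mu$ be the restriction of $\nu\circ(-\mathrm{id})$ to $E^\N_\binfty$, that is, the reflected measure with the mass at $\binfty$ deleted. Since no $A\in\bc(E^n_\binfty)$ contains $\binfty$, the cylinder $\{\bmx:(x_{i_1},\ldots,x_{i_n})\in A\}$ is unaffected by this deletion, so Vatan's projectivity yields $\mu(\{\bmx\in E^\N_\binfty:(x_{i_1},\ldots,x_{i_n})\in A\})=\mu_{i_1,\ldots,i_n}(A)$, and exchangeability of $\mu$ is inherited from that of $\nu$. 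For uniqueness, any $\mu$ as in (2) is $\sigma$-finite on $E^\N_\binfty=\bigcup_{i,m}\{x_i\le m\}$, since reducing $\{x_i\le m\}$ to a $d$-coordinate cylinder with base in $\bc(E^d_\binfty)$ and using that $\mu_d$ is Radon gives $\mu(\{x_i\le m\})<\infty$; the finite-dimensional cylinders with base in $\bigcup_n\bc(E^n_\binfty)$ form a $\pi$-system generating $\bc(E^\N_\binfty)$ on which (2) determines $\mu$, so $\mu$ is unique.

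For $(2)\Rightarrow(1)$ I would argue conversely. Each restriction $\mu_{1,\ldots,n}$ of $\mu$ to the first $n$ coordinates is again a genuine exponent measure on $E^n_\binfty$ --- it is Radon and vanishes on $\bigcup_i\{x_i=-\infty\}$, both inherited from the $d$-dimensional projections through the consistency and exchangeability postulated in (2) --- so by Proposition~\ref{propositionexponentmeasureofminid} the map $\fb_n(\bmt):=\exp(-\mu_{1,\ldots,n}((\bmt,\binfty]^\complement))$ is the survival function of a min-id random vector on $\minf^n$, with $\fb_d$ the survival function of $\bmX_d$. The family $(\fb_n)_{n\in\N}$ is Kolmogorov-consistent: letting the last argument of $\fb_{n+1}$ tend to $-\infty$ recovers $\fb_n$, because $\mu_{1,\ldots,n+1}$ puts no mass on $\{x_{n+1}=-\infty\}$. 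Kolmogorov's extension theorem then furnishes a sequence $\bmX=(X_i)_{i\in\N}$ with these finite-dimensional laws; every finite margin of $\bmX$ is min-id with exponent measure $\mu_{i_1,\ldots,i_n}$, which is exchangeable by exchangeability of $\mu$, so $\bmX$ is exchangeable by Proposition~\ref{propositionexchangeabilityexponentmeasure}, and $(X_1,\ldots,X_d)\sim\bmX_d$; hence $\bmX_d$ is extendible.

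I expect the main obstacle to be the bookkeeping around the non-product set $E^\N_\binfty$: one must reconcile Vatan's product-space formalism, in which projectivity is engineered by loading infinite mass on the reflected endpoint, with the present normalization, in which that point is simply removed, and verify that this excision is compatible with every finitary projection and with the $\sigma$-finiteness and $\pi$--$\lambda$ argument behind uniqueness. A secondary point requiring care is the propagation of Condition $(\Diamond)$ to all finite-dimensional margins of an extension, which is what guarantees that we never leave the class of exponent measures defined on the spaces $E^n_\binfty$.
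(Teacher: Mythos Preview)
Your approach is essentially the paper's: translate Vatan's max-id result via $\bmx\mapsto-\bmx$ (Lemma~\ref{lemmatrafomaxtominid}), then excise the single point $\binfty$ on which Vatan loads infinite mass, and invoke Proposition~\ref{propositionexchangeabilityexponentmeasure} for the exchangeability bookkeeping. The paper's own proof is a one-line citation to exactly this translation, so you have supplied the details rather than a different argument.

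One point deserves tightening. In your uniqueness argument you assert that ``(2) determines $\mu$'' on the $\pi$-system of all finite-dimensional cylinders with base in $\bigcup_n\bc(E^n_\binfty)$. As literally stated, (2) only pins down the $d$-dimensional projections, and knowing all $d$-marginals of an exchangeable $\sigma$-finite measure does not in general determine its $n$-marginals for $n>d$. What makes uniqueness go through here is that ``global exponent measure'' is not an arbitrary Radon measure but is, by definition in the paper, Vatan's exponent measure of the specific extension $\bmX$ with the point $\binfty$ removed; its $n$-dimensional projections are therefore the (unique, by Proposition~\ref{propositionexponentmeasureofminid}) exponent measures $\mu_{1,\ldots,n}$ of $(X_1,\ldots,X_n)$ for every $n$, and these do generate a $\pi$-system on which $\mu$ is determined. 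Your $\sigma$-finiteness computation and the $\pi$--$\lambda$ step are then correct. The remaining verifications---that $(\Diamond)$ propagates to all margins of the extension via the de~Finetti representation, that excising $\binfty$ is harmless because no cylinder over $A\in\bc(E^n_\binfty)$ contains it, and the Kolmogorov construction in $(2)\Rightarrow(1)$---are handled correctly.
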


\begin{proof}
The proof follows from an application of Proposition \ref{propositionexchangeabilityexponentmeasure} and a translation of the results of \cite{vatanmaxidinfinitedim} to the framework of \cite{resnickextreme} and the min-id case.
\end{proof}



Propositions~\ref{propositionexchangeabilityexponentmeasure} and~\ref{propositionextendibleexponentmeasure} allow to characterize the upper tail dependence coefficients of an exchangeable min-id random vector via its exponent measure. To this purpose, for $2\leq d^\prime\leq d$, define the $d^\prime$-variate upper tail dependence coefficient of an exchangeable min-id random vector $\bmX_d\in\minf^d$ as $\rho^u_{d^\prime}:=\lim_{t\to\infty} \pp(X_1>t\mid X_2>t,\ldots ,X_{d^\prime}>t)$. 

\begin{cor}[Upper tail dependence of exchangeable min-id distribution]
\label{cortaildependence}
The upper tail dependence coefficient of an exchangeable min-id random vector $\bmX_d\in \minf^d$ satisfying Condition $(\Diamond)$ is given by
\begin{align*}
    \rho^u_{d^\prime}=\exp \lc -\lim_{t\to\infty} \mu_{d^\prime}\lc [-\infty,t]\times (t,\infty]^{d^\prime-1}\rc\rc.
\end{align*}
In particular $\rho^u_{d^\prime}\leq \rho^u_{d^\prime+1} $ for all $2\leq d^\prime<d$.
\end{cor}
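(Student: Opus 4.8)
The plan is to express the conditional tail probability $\pp(X_1>t\mid X_2>t,\ldots,X_{d'}>t)$ directly in terms of the exponent measure $\mu_{d'}$ of the $d'$-dimensional margin $(X_1,\ldots,X_{d'})$, using the survival function representation from Proposition~\ref{propositionexponentmeasureofminid}. First I would write, for the $d'$-margin satisfying Condition $(\Diamond)$,
\begin{align*}
\pp(X_1>t\mid X_2>t,\ldots,X_{d'}>t)
&=\frac{\pp(X_1>t,\ldots,X_{d'}>t)}{\pp(X_2>t,\ldots,X_{d'}>t)}\\
&=\frac{\exp\lc-\mu_{d'}\lc ((t,\ldots,t),\binfty]^\complement\rc\rc}{\exp\lc-\mu_{d'-1}\lc ((t,\ldots,t),\binfty]^\complement\rc\rc},
\end{align*}
where the numerator uses $\mu_{d'}$ on $E^{d'}_\binfty$ and the denominator uses the exponent measure $\mu_{d'-1}$ of $(X_2,\ldots,X_{d'})$ on $E^{d'-1}_\binfty$. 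Then I would invoke Proposition~\ref{propositionextendibleexponentmeasure} (extendibility, which is guaranteed since $\bmX_d$ sits inside the min-id sequence $\bmX$): the lower-dimensional exponent measure is the pushforward of the higher-dimensional one under coordinate projection, so $\mu_{d'-1}\lc((t,\ldots,t),\binfty]^\complement\rc$ equals the $\mu_{d'}$-measure of the set $\{\bmx\in E^{d'}_\binfty\mid (x_2,\ldots,x_{d'})\in ((t,\ldots,t),\binfty]^\complement\}=E^{d'}_\binfty\setminus ([-\infty,\infty]\times(t,\binfty]^{d'-1})$. Subtracting the two exponent measures, the difference collapses to $\mu_{d'}\lc[-\infty,t]\times(t,\infty]^{d'-1}\rc$, which is finite for each fixed $t$ because it is a relatively compact subset of $E^{d'}_\binfty$ bounded away from $\binfty$ and the exponent measure is Radon. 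Hence
$$\pp(X_1>t\mid X_2>t,\ldots,X_{d'}>t)=\exp\lc-\mu_{d'}\lc[-\infty,t]\times(t,\infty]^{d'-1}\rc\rc,$$
and taking $t\to\infty$ gives the formula, provided the limit and the exponential can be interchanged, which is immediate by continuity of $\exp$.

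For the monotonicity claim, I would use the projectivity in the other direction. Fix $2\le d'<d$. Writing $\mu_{d'+1}$ for the exponent measure of $(X_1,\ldots,X_{d'+1})$, projectivity (Proposition~\ref{propositionextendibleexponentmeasure}) identifies $\mu_{d'}\lc[-\infty,t]\times(t,\infty]^{d'-1}\rc$ with the $\mu_{d'+1}$-measure of the preimage $\{\bmx\in E^{d'+1}_\binfty\mid x_1\le t,\ x_2>t,\ldots,x_{d'}>t\}$, which contains the set $\{x_1\le t,\ x_2>t,\ldots,x_{d'}>t,\ x_{d'+1}>t\}$ whose $\mu_{d'+1}$-measure is $\mu_{d'+1}\lc[-\infty,t]\times(t,\infty]^{d'}\rc$. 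Therefore $\mu_{d'}\lc[-\infty,t]\times(t,\infty]^{d'-1}\rc\ge \mu_{d'+1}\lc[-\infty,t]\times(t,\infty]^{d'}\rc$ for every $t$, hence the same inequality survives in the limit $t\to\infty$, and applying the decreasing function $\exp(-\cdot)$ yields $\rho^u_{d'}\le\rho^u_{d'+1}$.

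The main obstacle I anticipate is bookkeeping with the sets $(\bmx,\binfty]^\complement$ relative to the non-product space $E^d_\binfty$: one must carefully verify that the set-theoretic difference of the two complement-sets appearing in numerator and denominator is exactly $[-\infty,t]\times(t,\infty]^{d'-1}$ up to $\mu_{d'}$-null sets (the coordinate hyperplanes $\{x_i=-\infty\}$ carry no mass by Definition~\ref{definitionexponentmeasure}, which is what makes the denominator's exponent measure on $E^{d'-1}_\binfty$ match the projected measure cleanly), and that all measures involved are finite so the subtraction of exponents is legitimate. A secondary point is to ensure the exchangeability from Proposition~\ref{propositionexchangeabilityexponentmeasure} is used only to guarantee that the choice of which coordinate plays the role of $X_1$ is immaterial; it is not otherwise needed. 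Once these identifications are pinned down, the rest is a one-line computation and a monotonicity-of-$\exp$ argument.
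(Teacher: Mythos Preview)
Your proposal is correct and follows essentially the same route as the paper: write the conditional probability as a ratio of survival functions, express both via exponent measures, use projectivity to rewrite $\mu_{d'-1}$ in terms of $\mu_{d'}$, and subtract the exponents to isolate $\mu_{d'}\lc[-\infty,t]\times(t,\infty]^{d'-1}\rc$; the monotonicity then comes from the same projectivity and a set inclusion. The only cosmetic difference is that for the inequality the paper embeds both $\mu_{d'}$ and $\mu_{d'+1}$ into the fixed top-level $\mu_d$ rather than comparing $\mu_{d'}$ directly to $\mu_{d'+1}$ as you do, but the arguments are equivalent.
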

\begin{proof}
The proof can be found in Appendix \ref{appendixproofs}.
\end{proof}
As a caveat we want to remark that in general
$\lim_{t\to\infty} \mu_{d^\prime}\lc [-\infty,t]\times (t,\infty]^{d^\prime-1}\rc$ may not be equal to $\mu_{d^\prime}\lc [-\infty,\infty)\times \{\infty\}^{d^\prime-1}\rc$, which is due to the (possibly) infinite mass of $\mu_{d^\prime}$.



\subsection{Extended chronometers}
\label{subsectionextendedchron}

In this subsection we characterize the class of nnnd infinitely divisible c\`adl\`ag processes. In particular, we show that the L\'evy measure of such processes is concentrated on nnnd paths. \\

First, we recall the definition of infinitely divisible random vectors and infinitely divisible stochastic processes. For the sake of well-definedness of sums of (possibly) infinite quantities we only allow for random vectors and stochastic processes which cannot assume the values infinity and negative infinity with positive probability at the same time. Our specifications are formalized in the following definition.

\begin{defn}[Infinitely divisible]
 \label{defidrvidprocess}
\begin{enumerate}
\item A random vector $\bm H\in\bR^d$ such that for all $1\leq i\leq d$ we either have $P(H_i=\infty)=0$ or $\pp(H_i=-\infty)=0$ is infinitely divisible (id) if for all $n\in \N$ there exist i.i.d.\ random vectors $(\bm H^{(i,1/n)})_{1\leq i\leq n}$ such that $\bm H \sim \sum_{i=1}^n \bm H^{(i,1/n)}$.
\item A stochastic process $(H_t)_{t\in \RR}\in\bR^\RR$ such that for all $t\in\RR$ we either have $P(H_t=\infty)=0$ or $\pp(H_t=-\infty)=0$ is infinitely divisible (id) if for all $n\in \N$ there exist i.i.d.\ stochastic processes $\lc \lc H^{(i,1/n)}_t\rc_{t\in \RR}\rc_{1\leq i \leq n} \in \lc \RR^\RR\rc^n$ such that $H \sim \sum_{i=1}^n H^{(i,1/n)}$.
\end{enumerate}
\end{defn}

An excellent textbook treatment of infinitely divisible distributions is \cite{satolevyinfinitely}. Infinitely divisible stochastic processes have been investigated, among others, by \cite{leeinfinitely,maruyamainfinitely,BarndorffNielsen2006InfiniteDF,rosinskiinfdivproc}. We mainly follow the pathwise approach of \cite{rosinskiinfdivproc}. However, in contrast to \cite{rosinskiinfdivproc}, we only consider c\`adl\`ag id-processes and allow jumps to $\infty$. One can show that all relevant results of \cite[Sections 1-3]{rosinskiinfdivproc} remain valid under this slight change of the general framework.

In this paper we focus on nnnd c\`adl\`ag id-processes. For us, this is not a loss of generality, since de Finetti's Theorem implies that $H$ in the construction method of Equation $(\ref{constructionexchangeseq})$ can be chosen as a nnnd c\`adl\`ag process. These processes can be viewed as an extension of chronometers, which were introduced in \cite{BarndorffNielsen2006InfiniteDF}.

\begin{defn}[Extended chronometer]
\label{defextchronometer}
A nnnd infinitely divisible process $(H_t)_{t\in\RR}$ $\in D^\infty(\RR)$ is called an extended chronometer.
\end{defn}

In contrast to chronometers in \cite{BarndorffNielsen2006InfiniteDF}, we do not require the stochastic continuity of extended chronometers. This generalization is necessary to account for non-continuous min-id distributions in Section~\ref{sectionextminidcorrespondstoextchron}. To simplify our developments we focus on extended chronometers that are compatible with Condition $(\Diamond)$. To this purpose we need to impose two additional constraints on extended chronometers, which are specified in the following condition.
\begin{condprime}
An extended chronometer $(H_t)_{t\in\RR}$ satisfies Condition $(\Diamond^\prime)$ if the following two conditions are satisfied
\begin{enumerate}
    \item[$(\Diamond^\prime 1)$] $\pp\lc \lim_{t\to -\infty}H_t=0\rc=1$ and
    \item[$(\Diamond^\prime 2)$] $\pp(H_t=\infty)<1$ for all $t\in\RR$.
\end{enumerate} 
\end{condprime}

\begin{rem}[Equivalence of Conditions $(\Diamond)$ and $(\Diamond^\prime)$]
Consider an extended chronometer $(H_t)_{t\in\RR}$ and an i.i.d.\ unit exponential sequence $(E_i)_{i\in\N}$. By virtue of Equation (\ref{constructionexchangeseq}) we construct an exchangeable sequence $\bmX:=\lc \inf \{t\in\RR\mid H_t\geq E_i\}\rc_{i\in\N}$. It is easy to see that $\bmX>-\binfty$ almost surely if and only if Condition $(\Diamond^\prime 1)$ is satisfied. Moreover, assuming that $\bmX$ is min-id, one can check that Condition $(\Diamond^\prime 2)$ is equivalent to the constraint $\bell=\binfty$. Therefore, under the assumption that $\bmX$ is min-id, $\bmX$ satisfies Condition $(\Diamond)$ if and only if $(H_t)_{t\in\RR}$ satisfies Condition $(\Diamond^\prime)$. 
\end{rem}

Recall that the distribution of a non-negative stochastic process in $D^\infty(\RR)$ is uniquely determined by its Laplace transform. Translating \cite[Theorems 2.8 and 3.4]{rosinskiinfdivproc} to non-negative c\`adl\`ag id-processes shows that the Laplace transform of a non-negative c\`adl\`ag id-process can be uniquely characterized by a function $b:\RR\to\RR$ and a measure $\nu$ on $D^\infty(\RR)$. To be precise, for all $d\in\N$, the Laplace transform of the $d$-dimensional margins of a non-negative id-process $(H_t)_{t\in\RR}\in D^\infty(\RR)$ satisfying Condition $(\Diamond^\prime 2)$ can be written as
\begin{align}
&L: [0,\infty)^d \times \RR^d \to [0,1];\  (\bm z,\bm t)\mapsto\  \ee\lk \exp\lc -\sum_{i=1}^d  z_i H_{t_i}\rc\rk\nonumber \\
&=\exp\Bigg(-\sum_{i=1}^d z_i b(t_i) \nonumber\\
& + \bigintsss_{D^\infty(\RR)}\lc \exp\lc -\sum_{i=1}^d z_i x(t_i)\rc-1-\sum_{i=1}^d z_i x(t_i)\id_{\{\vert x(t_i)\vert <\epsilon\}} \rc \nu(\mathrm{d}x)\Bigg),\label{laplaceexponentidprocess} 
\end{align}
where $\epsilon>0$ is arbitrary, $b:\RR \to \RR$ is a unique function and $\nu$ is a unique measure on $D^\infty(\RR)$ satisfying
\begin{enumerate}
    \item $\nu(0_{D^\infty(\RR)})=0$, and
    \item $\int_{D^\infty(\RR)} \min \{x(t),1\}\nu(\mathrm{d}x)<\infty$ for all $t\in\RR$.
\end{enumerate}
The function $b$ is called drift of $(H_t)_{t\in\RR}$ and the measure $\nu$ is called L\'evy measure of $(H_t)_{t\in\RR}$. More generally, every measure $\nu$ on $D^\infty(\RR)$ satisfying Conditions $1.$ and $2.$ is called a L\'evy measure.

The literature usually treats id-processes as stochastic processes in $\RR^\RR$. Therefore, if $H\in D^\infty(\RR)$ is id, it is not clear whether the $H^{(i,1/n)}$ appearing in Definition \ref{defidrvidprocess} can also be chosen as elements of $D^\infty(\RR)$, as was already pointed out in \cite[Remark 4.5]{BarndorffNielsen2006InfiniteDF}. If $H$ is stochastically continuous, non-negative and almost surely non-decreasing the answer is positive, as can be deduced from \cite[Proposition 6.2]{BarndorffNielsen2006InfiniteDF}.

In our framework it is important that all $H^{(i,1/n)}$ can be chosen as c\`adl\`ag processes, since we need to interpret $\exp\lc -H^{(i,1/n)}\rc$ as a random survival function in Section \ref{sectionmainresults}. The next lemma generalizes the results of \cite[Proposition 6.2]{BarndorffNielsen2006InfiniteDF} and shows that all c\`adl\`ag id-processes have a representation as a sum of c\`adl\`ag id-processes.
 
\begin{lem}[C\`adl\`ag id-processes are distributed as i.i.d.\ sum of c\`adl\`ag id-processes]
\label{lemcadlagsums}
Consider an id-process $H\in D^\infty(\RR)$. Then, for every $n\in\N$, we can find i.i.d.\ id-processes $\lc H^{(i,1/n)}\rc_{1\leq i\leq n}\in\lc D^\infty (\RR)\rc^n$ such that $H\sim \sumn H^{(i,1/n)}$.
\end{lem}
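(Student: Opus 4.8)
The plan is to follow the pathwise L\'evy--It\^o philosophy of \cite{rosinskiinfdivproc}, but to work on the space $D^\infty(\RR)$ of c\`adl\`ag paths rather than $\RR^\RR$, and to exploit the fact that $H$ carries a unique drift $b$ and L\'evy measure $\nu$ via the Laplace transform \eqref{laplaceexponentidprocess}. The key point is that both the drift and the L\'evy measure are themselves ``infinitely divisible'' objects in an obvious sense: the L\'evy measure can be split into $n$ equal pieces by thinning a Poisson random measure, and the drift by dividing by $n$. So first I would recall that a non-negative c\`adl\`ag id-process $H$ satisfying Condition $(\Diamond^\prime 2)$ admits, by the translation of \cite[Theorems 2.8 and 3.4]{rosinskiinfdivproc} quoted above, a representation whose finite-dimensional Laplace transforms are given by \eqref{laplaceexponentidprocess} with a unique drift $b:\RR\to\RR$ and a unique L\'evy measure $\nu$ on $D^\infty(\RR)$; in the non-negative case one may even take $\epsilon$ arbitrarily small and absorb the compensator into $b$, so that effectively $L(\bm z,\bm t)=\exp\big(-\sum_i z_i b(t_i)+\int_{D^\infty(\RR)}(\exp(-\sum_i z_i x(t_i))-1)\,\nu(\mathrm dx)\big)$ with $\int \min\{x(t),1\}\,\nu(\mathrm dx)<\infty$ for every $t$.

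Next I would construct the summands explicitly. Fix $n\in\N$. Let $N=\sum_k \delta_{Y_k}$ be a Poisson random measure on $D^\infty(\RR)$ with intensity $\nu$, and independently mark each atom $Y_k$ with a uniform label $U_k\in\{1,\dots,n\}$; set $N_j:=\sum_{k:\,U_k=j}\delta_{Y_k}$, so that $N_1,\dots,N_n$ are i.i.d.\ Poisson random measures with common intensity $\nu/n$. Define
\begin{align*}
H^{(j,1/n)}_t:=\frac{b(t)}{n}+\int_{D^\infty(\RR)} x(t)\, N_j(\mathrm dx), \quad t\in\RR,\ 1\le j\le n.
\end{align*}
Because $\nu$ is concentrated on nnnd paths (this is exactly the characterization of L\'evy measures of extended chronometers established earlier in Subsection~\ref{subsectionextendedchron}), the integral $\int x(t)\,N_j(\mathrm dx)$ is a sum of non-negative, non-decreasing c\`adl\`ag functions, hence the partial sums converge and the limit is again nnnd; together with the non-negative deterministic drift $b(t)/n$ this shows $H^{(j,1/n)}\in D^\infty(\RR)$ almost surely. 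The $H^{(j,1/n)}$ are i.i.d.\ since the $N_j$ are, and they are id because each is built from a Poisson random measure. Finally $\sum_{j=1}^n H^{(j,1/n)}_t=b(t)+\int x(t)\,N(\mathrm dx)$, which has exactly the Laplace transform \eqref{laplaceexponentidprocess} of $H$; since the law of a non-negative element of $D^\infty(\RR)$ is determined by its finite-dimensional Laplace transforms, $\sum_{j=1}^n H^{(j,1/n)}\sim H$.

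There is a genuine subtlety I should address: the lemma as stated does not assume $H\ge 0$ nor Condition $(\Diamond^\prime 2)$ — it is for an arbitrary id-process $H\in D^\infty(\RR)$ (possibly real-valued, possibly with jumps to $+\infty$). To reduce to the tractable case I would (i) handle the possibility $\pp(H_t=\infty)=1$ for $t$ in some up-set $(s_0,\infty)$ by splitting off that deterministic $+\infty$ part and working on the complementary half-line where $\pp(H_t=\infty)<1$; and (ii) for a real-valued (not necessarily non-negative) c\`adl\`ag id-process, apply the general pathwise representation of \cite[Theorems 2.8 and 3.4]{rosinskiinfdivproc} with a fixed truncation level $\epsilon$: here the summand is $H^{(j,1/n)}_t=b(t)/n+\int x(t)\,N_j(\mathrm dx)-\frac1n\int x(t)\id_{\{|x(t)|<\epsilon\}}\nu(\mathrm dx)$, i.e.\ the usual compensated-sum construction, and the only thing to verify is that the resulting process has c\`adl\`ag paths — which follows because $\nu$ is carried by $D^\infty(\RR)$ and the compensated Poisson integral of c\`adl\`ag paths is a.s.\ c\`adl\`ag (the standard argument, e.g.\ via \cite[Section 3]{rosinskiinfdivproc}). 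I expect the main obstacle to be precisely this pathwise regularity bookkeeping — ensuring the random series defining $\int x(t)\,N_j(\mathrm dx)$ converges in $D^\infty(\RR)$ uniformly enough on compacts to stay c\`adl\`ag — rather than the distributional identity, which is immediate from uniqueness of the Laplace transform. Since the excerpt announces that ``all relevant results of \cite[Sections 1-3]{rosinskiinfdivproc} remain valid'' in this c\`adl\`ag-with-jumps-to-$\infty$ framework, I would invoke that to discharge the regularity step and keep the proof short.
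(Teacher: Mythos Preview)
Your approach is genuinely different from the paper's. The paper does not touch the L\'evy--It\^o representation at all: it starts from the a-priori decomposition $H\sim\sum_{i=1}^m H^{(i,1/m)}$ with i.i.d.\ summands living only in $\bR^\RR$, and then shows directly that each summand has a c\`adl\`ag version. The argument is entirely elementary: restrict to rational times, prove by contradiction that each summand is $\QQ$-right-continuous and has one-sided limits along finally one-sided $\QQ$-Cauchy sequences (if one summand failed at some $\bar q\in\QQ$ with positive probability, then by i.i.d.\ all $m$ would fail there simultaneously with positive probability, forcing $H$ itself to fail right-continuity at $\bar q$, contradicting $H\in D^\infty(\RR)$), then define $\tilde H^{(i,1/m)}_t$ as the right limit along rationals and verify it is c\`adl\`ag with the correct finite-dimensional law. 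No L\'evy measure, no Poisson random measure, no drift.

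Your constructive route via Poisson thinning is appealing and, for the nnnd case, essentially works---but two issues are worth flagging. First, a logical-ordering point: you invoke ``$\nu$ concentrated on nnnd paths'' as already established, but in the paper that is Proposition~\ref{propositionltchronometer}, which comes \emph{after} Lemma~\ref{lemcadlagsums}; there is no actual circularity (the proof of Proposition~\ref{propositionltchronometer} does not use the lemma), but your argument would force a reordering of the exposition. Second, and more substantively, the lemma is stated for arbitrary id-processes in $D^\infty(\RR)$, not just non-negative ones. In the general case you do not mention the Gaussian component (easy, but needed), and you discharge the hard step---that the compensated Poisson integral of c\`adl\`ag paths is itself a.s.\ c\`adl\`ag---by invoking the paper's blanket claim that Rosin\'ski's Sections~1--3 ``remain valid'' in $D^\infty(\RR)$. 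This comes close to assuming what is to be proved: the lemma is flagged in the introduction as resolving an open question of \cite{BarndorffNielsen2006InfiniteDF}, and $D^\infty(\RR)$ with its $\pm\infty$ is not an algebraic subgroup of $\bR^\RR$, so \cite[Theorem~3.4]{rosinskiinfdivproc} does not apply off the shelf. The paper's elementary argument sidesteps all of this by never needing the L\'evy measure to live anywhere in particular; what you gain in explicitness you pay for in having to justify the pathwise c\`adl\`ag regularity of the thinned series, which is exactly the content the lemma is meant to supply.
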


\begin{proof}
The proof can be found in Appendix \ref{appendixproofs}.
\end{proof}

Lemma~\ref{lemcadlagsums} verifies that Definition~\ref{defidrvidprocess} could also be formulated solely for process which are id in the space $D^\infty(\RR)$, since it excludes the possibility of the existence of a stochastic process $H\in D^\infty(\RR)$ which is id in $\RR^\RR$, but not id in $D^\infty(\RR)$. 

To interpret $\exp\lc -H^{(i,1/n)}\rc$ as a random distribution function when $H$ is an extended chronometer it remains to ensure that each $H^{(i,1/n)}$ is nnnd, i.e.\ an extended chronometer. 
The following corollary shows that extended chronometers are infinitely divisible in the space of extended chronometers, i.e.\ that every extended chronometer can be represented as the sum of arbitrarily many extended chronometers.

\begin{cor}[Extended chronometers are distributed as i.i.d.\ sum of extended chronometers]
\label{corcadlagsums}
Consider an extended chronometer $H$. Then, for every $n\in\N$, we can find i.i.d.\ extended chronometers $\lc H^{(i,1/n)}\rc_{1\leq i\leq n}$ such that $H\sim \sumn H^{(i,1/n)}$.
\end{cor}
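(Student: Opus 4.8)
The plan is to combine Lemma~\ref{lemcadlagsums}, which already guarantees a representation $H\sim\sum_{i=1}^n H^{(i,1/n)}$ with i.i.d.\ c\`adl\`ag id-processes $H^{(i,1/n)}\in D^\infty(\RR)$, with the pathwise L\'evy--It\^o description of id-processes from \cite{rosinskiinfdivproc} (as specialized in the excerpt to non-negative c\`adl\`ag id-processes satisfying Condition $(\Diamond^\prime2)$ via Equation~\eqref{laplaceexponentidprocess}). The point is that Lemma~\ref{lemcadlagsums} gives c\`adl\`ag summands but says nothing about non-negativity or monotonicity; the remaining work is to upgrade those summands to genuine extended chronometers. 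I expect this to follow from the fact that the L\'evy measure and drift of the summands are just rescaled versions of those of $H$, so that the structural properties ``supported on nnnd paths'' and ``non-negative drift'' are inherited.

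First I would recall that, by the uniqueness of the drift $b$ and L\'evy measure $\nu$ in Equation~\eqref{laplaceexponentidprocess}, if $H$ is an extended chronometer then its canonical pair $(b,\nu)$ satisfies: $\nu$ is concentrated on nnnd paths (this is precisely the characterization of L\'evy measures of nnnd c\`adl\`ag id-processes asserted in Subsection~\ref{subsectionextendedchron}, which I may assume as an earlier result), and $b$ together with $\nu$ produces a non-decreasing, non-negative process. Then I would take the explicit id-process $\tilde H$ whose canonical pair is $(b/n,\nu/n)$: by the form of \eqref{laplaceexponentidprocess}, an $n$-fold i.i.d.\ sum of copies of $\tilde H$ has drift $b$ and L\'evy measure $\nu$, hence the same Laplace transform on all finite-dimensional margins as $H$, hence $H\sim\sum_{i=1}^n\tilde H^{(i)}$ with $\tilde H^{(i)}$ i.i.d.\ copies of $\tilde H$. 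Here I would invoke Lemma~\ref{lemcadlagsums} (or its proof) to ensure $\tilde H$ can be realized in $D^\infty(\RR)$ in the first place, since $(b/n,\nu/n)$ is again a valid drift/L\'evy pair.

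It then remains to check that $\tilde H$ is itself an extended chronometer, i.e.\ almost surely nnnd, and moreover that it satisfies the relevant part of Condition $(\Diamond^\prime)$. Non-negativity and non-decrease of $\tilde H$ follow because its L\'evy measure $\nu/n$ is still concentrated on nnnd paths and its drift $b/n$ is still of the type that yields an nnnd process --- concretely, one can read off from the pathwise construction of \cite{rosinskiinfdivproc} that the process is an (a.s.\ convergent) integral of nnnd paths against a Poisson random measure plus the drift, and a sum/integral of nnnd functions is nnnd. For Condition $(\Diamond^\prime1)$, since $H$ satisfies $\lim_{t\to-\infty}H_t=0$ a.s.\ and $H$ stochastically dominates each nonnegative summand $\tilde H^{(i)}$ (being their sum), we get $0\le \lim_{t\to-\infty}\tilde H^{(i)}_t\le \lim_{t\to-\infty}H_t=0$ a.s.; alternatively this is visible from $b(-\infty)=0$ and the path support of $\nu$. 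For $(\Diamond^\prime2)$, if some $\tilde H^{(i)}_t=\infty$ with probability one then $H_t=\infty$ a.s., contradicting $(\Diamond^\prime2)$ for $H$.

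The main obstacle, and the step deserving the most care, is the passage from ``$\nu$ concentrated on nnnd paths and $b$ admissible'' to ``the process built from $(b/n,\nu/n)$ has a modification with nnnd c\`adl\`ag paths, and this modification is the summand delivered by Lemma~\ref{lemcadlagsums}''. This is where one must be careful that the a.s.\ path regularity is not lost when one splits the L\'evy measure, and that the c\`adl\`ag modification whose existence is asserted by Lemma~\ref{lemcadlagsums} coincides (in law) with the pathwise series representation, so that its monotonicity can be read off path by path. Once that identification is made, monotonicity and non-negativity are inherited termwise from the paths in the support of $\nu/n$ and from $b/n\ge$ (the relevant lower bound), and the corollary follows. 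I would therefore phrase the proof as: apply Lemma~\ref{lemcadlagsums} to get c\`adl\`ag i.i.d.\ summands; identify their common law via the halved-down canonical pair $(b/n,\nu/n)$ using uniqueness in \eqref{laplaceexponentidprocess}; conclude each summand is nnnd and satisfies $(\Diamond^\prime)$ from the structure of $\nu$, $b$ and the domination by $H$.
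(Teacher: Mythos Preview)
Your approach is workable in principle but takes a heavier and somewhat circular route compared with the paper. The paper does \emph{not} go through the L\'evy--Khintchine pair $(b/n,\nu/n)$ at all. Instead it simply extends the rational-time argument of Lemma~\ref{lemcadlagsums}: starting from the c\`adl\`ag summands $\tilde H^{(i,1/n)}$ already produced there, one checks directly that each summand is almost surely non-negative and non-decreasing on $\QQ$. The argument is the same i.i.d.\ trick as for right-continuity: if $\tilde\pp\big(\tilde H^{(1,1/n)}_q<0\big)>0$ for some $q\in\QQ$, then by independence all $n$ summands are simultaneously negative at $q$ with positive probability, forcing $H_q<0$ with positive probability, a contradiction; the same works for $\tilde\pp\big(\tilde H^{(1,1/n)}_{q_1}>\tilde H^{(1,1/n)}_{q_2}\big)>0$ with $q_1<q_2$. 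One then multiplies $\tilde H^{(i,1/n)}$ by the indicator of the a.s.\ set $A_{\geq 0}\cap A_{\nearrow}$ and is done. No L\'evy measure, no drift, no Condition~$(\Diamond^\prime)$ enter.

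Your route, by contrast, leans on two results that in the paper appear \emph{after} the corollary: (i) the fact that the L\'evy measure of an extended chronometer is supported on nnnd paths is Proposition~\ref{propositionltchronometer}, which moreover is stated under Condition~$(\Diamond^\prime)$, while Corollary~\ref{corcadlagsums} assumes only that $H$ is an extended chronometer; and (ii) the converse implication you need---``L\'evy measure on nnnd paths and nnnd drift $\Rightarrow$ the process has an nnnd version''---is mentioned in the remark following Proposition~\ref{propositionltchronometer} but explicitly \emph{not} proved in the paper. So while the identification $(b/n,\nu/n)$ is correct and your plan could be completed, it imports machinery that is both logically downstream and partially unproven here, and your discussion of $(\Diamond^\prime 1)$ and $(\Diamond^\prime 2)$ is extraneous to the statement. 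The paper's elementary coupling argument is both shorter and self-contained.
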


\begin{proof}
The proof can be found in Appendix \ref{appendixproofs}.
\end{proof}

Our next goal is to connect the path properties of an extended chronometer with the support of its L\'evy measure. To get an intuition about the correspondences of path properties of an id-process and properties of its L\'evy measure we recall the following example from \cite{rosinskiinfdivproc}.

\begin{ex}[Finite L\'evy measure {\cite[Example 2.26]{rosinskiinfdivproc}}]
\label{examplefinitelevymeasure}
Consider a sequence of i.i.d.\ c\`adl\`ag processes $(h^{(i)})_{i\in\N}$ with marginal distribution $\pp_h$, a Poisson random variable $N$ with mean $\lambda$ and define a stochastic process $(H_t)_{t\in\RR}=\lc\sum_{i=1}^N h^{(i)}_t\rc_{t\in\RR}$. \cite[Example 2.26]{rosinskiinfdivproc} proves that $H$ is infinitely divisible with drift $0$ and L\'evy measure $\lambda \pp_h\lc\cdot \cap \{0_{D^\infty(\RR)}\}^\complement\rc$. Obviously, $H$ is nnnd if and only if the L\'evy measure $\lambda \pp_h\lc\cdot \cap \{0_{D^\infty(\RR)}\}^\complement\rc$ is supported on nnnd functions. 
\end{ex}

Unfortunately, the L\'evy measure of an id-process is infinite in most cases of interest. Thus, the construction method in Example~\ref{examplefinitelevymeasure} is rather limited and we cannot immediately draw the same conclusions as in Example~\ref{examplefinitelevymeasure} for general extended chronometers. 

In the following we show that the observations of Example~\ref{examplefinitelevymeasure} remain valid for extended chronometers, i.e.\ if a c\`adl\`ag id-process is nnnd then its L\'evy measure is concentrated on nnnd c\`adl\`ag functions. More specifically, we show that the L\'evy measure of an extended chronometer satisfying Condition $(\Diamond^\prime)$ is supported on the nnnd functions in $D^\infty(\RR)$ satisfying $\lim_{t\to-\infty}x(t)=0$. A weaker version of this statement was claimed but not proven in \cite[Section 4]{leeinfinitely}, who followed a technically different approach in comparison to the pathwise approach of \cite{rosinskiinfdivproc}. The author claimed that a proof of his statement works similar to other proofs given in \cite{leeinfinitely}. However, all of the referred proofs are not very detailed and are not compatible with the pathwise approach of \cite{rosinskiinfdivproc}. An alternative approach to prove our claim would use an application of \cite[Theorem 3.4]{rosinskiinfdivproc}, which provides a tool to restrict the L\'evy measure of an id-process to a smaller domain. Unfortunately, the theorem cannot be applied in our setting, since our favored domain, nnnd c\`adl\`ag functions, does not form an algebraic group under addition. Therefore, we provide a formal proof of our claims in the following proposition.

\begin{prop}[Laplace transform of an extended chronometer satisfying Condition $(\Diamond^\prime)$]
\label{propositionltchronometer}
Let $d\in\N$ and let $(H_t)_{t\in\RR}$ denote an extended chronometer satisfying Condition $(\Diamond^\prime)$. Then, for $\bm z\in[0,\infty)^d,\bm t\in\RR^d$, we have 
\begin{align*}
    L(\bm t,\bm z)&=\ee\lk e^{-\sumd z_i H_{t_i}} \rk\\
    &=\exp\lc  -\sumd z_i b(t_i) -  \int_{M^0_\infty} \lc 1-e^{-\sumd z_i x(t_i) } \rc \nu(\mathrm{d}x)\rc,
\end{align*} 
where $\nu$ is a L\'evy measure on 
$$M^0_\infty:=\{ x\in D^\infty(\RR) \mid x \text{ is non-decreasing}, \lim_{t\to -\infty} x(t)=0\},$$
and $b\in M^0_\infty\cap D(\RR)$.
\end{prop}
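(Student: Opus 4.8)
The plan is to show that the L\'evy measure $\nu$ appearing in the general representation \eqref{laplaceexponentidprocess} is already concentrated on $M^0_\infty$ once we know that $(H_t)_{t\in\RR}$ is nnnd and satisfies Condition $(\Diamond^\prime)$; once that is established, the drift $b$ can be handled separately, and the displayed formula follows by simplifying the integrand (since the compensator term $\sum_i z_i x(t_i)\id_{\{|x(t_i)|<\epsilon\}}$ may be absorbed into the drift when the integral $\int \min\{x(t),1\}\,\nu(\mathrm{d}x)<\infty$ and $x\ge 0$). So there are really three things to do: (1) $\nu$ gives no mass to paths that fail to be non-decreasing; (2) $\nu$ gives no mass to paths $x$ with $\lim_{t\to-\infty}x(t)\neq 0$; (3) reassemble the Laplace transform in the claimed form, and check $b\in M^0_\infty\cap D(\RR)$.

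For step (1), the key device is the pathwise representation of an id-process via a Poisson random measure: by Lemma~\ref{lemcadlagsums} (together with the structure in \cite{rosinskiinfdivproc}), $H$ can be realized, after subtracting the deterministic drift, as $H_t = b(t) + \int x(t)\,(N-\mathrm{comp})(\mathrm{d}x)$ where $N$ is a Poisson random measure on $D^\infty(\RR)$ with intensity $\nu$. Fix rationals $s<t$. The finite-dimensional vector $(H_s,H_t)$ is then an ordinary two-dimensional id random vector whose L\'evy measure is the pushforward of $\nu$ under $x\mapsto (x(s),x(t))$. Since $H_t-H_s\ge 0$ a.s. (as $H$ is nnnd), the one-dimensional id random variable $H_t-H_s$ has a L\'evy measure supported on $[0,\infty)$ — this is the classical statement that a non-negative id random variable is a subordinator marginal, i.e. its L\'evy measure charges only $(0,\infty)$. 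Hence the pushforward of $\nu$ under $x\mapsto x(t)-x(s)$ assigns zero mass to $(-\infty,0)$, i.e. $\nu(\{x : x(t)<x(s)\})=0$. Taking a countable union over pairs $s<t$ in $\QQ$ and using right-continuity of the paths in $D^\infty(\RR)$ to upgrade from a dense set to all reals gives $\nu(\{x \in D^\infty(\RR): x \text{ is not non-decreasing}\})=0$.

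For step (2), one exploits Condition $(\Diamond^\prime 1)$, $\pp(\lim_{t\to-\infty}H_t=0)=1$, together with non-negativity and monotonicity of $H$ just established for its own paths. Since $H_t\downarrow 0$ as $t\to-\infty$ almost surely, and $H$ decomposes as an i.i.d.\ sum $\sumn H^{(i,1/n)}$ of extended chronometers (Corollary~\ref{corcadlagsums}), each summand must also satisfy $\lim_{t\to-\infty}H^{(i,1/n)}_t=0$ a.s.; iterating the Poisson-superposition picture, the jumps of $H$ — which are exactly the paths $x$ charged by $\nu$ — must individually have $\lim_{t\to-\infty}x(t)=0$, for otherwise a single atom of $N$ would force $\liminf_{t\to-\infty}H_t>0$ with positive probability. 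More carefully: the event $\{\lim_{t\to-\infty}H_t = 0\}$ pulled back through the Poisson representation forces, for $\nu$-a.e.\ $x$, that $x(t)\to 0$ as $t\to-\infty$ — one proves $\nu(\{x : \limsup_{t\to-\infty}x(t)>1/k\})=0$ for each $k$ by noting that if this set had positive $\nu$-mass, then with positive probability $N$ would have an atom there, and since all other atoms contribute non-negatively (by monotonicity and non-negativity), $H_t$ would be bounded below by $1/k$ along a sequence $t\to-\infty$, contradicting $(\Diamond^\prime 1)$. Combined with $\nu(0_{D^\infty(\RR)})=0$, this shows $\nu$ is supported on $M^0_\infty$.

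The last step is bookkeeping: with $x\ge 0$ and $\nu$ supported on $M^0_\infty$ we have $\int \min\{x(t),1\}\,\nu(\mathrm{d}x)<\infty$ for every $t$, so for each fixed $(\bm z,\bm t)$ the truncation function $\sum_i z_i x(t_i)\id_{\{|x(t_i)|<\epsilon\}}$ is $\nu$-integrable and can be moved into the drift; redefining $b$ accordingly (and checking the redefinition is consistent across all $(\bm z,\bm t)$, which it is because the correction is linear in $\bm z$) yields $L(\bm t,\bm z)=\exp(-\sumd z_i b(t_i) - \int_{M^0_\infty}(1-e^{-\sumd z_i x(t_i)})\,\nu(\mathrm{d}x))$. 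Finally, $b$ inherits its properties from $H$: taking $\nu$-expectations in the Poisson representation (or letting the jump part vanish by a thinning argument) shows $t\mapsto b(t)$ is non-negative, non-decreasing, c\`adl\`ag, and tends to $0$ at $-\infty$, i.e.\ $b\in M^0_\infty\cap D(\RR)$. The main obstacle is step (2): the theorem of \cite[Theorem 3.4]{rosinskiinfdivproc} that would ordinarily let one ``restrict'' a L\'evy measure to a sub-domain does not apply because $M^0_\infty$ is not a group under addition, so the behaviour of $\nu$ near $-\infty$ must be extracted directly from Condition $(\Diamond^\prime 1)$ via the Poisson-atom argument rather than by a soft functional-analytic restriction.
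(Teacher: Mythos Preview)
Your proposal is correct and arrives at the same conclusion, but the route for step~(2) differs from the paper's in an instructive way.

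For step~(1) your argument via the pushforward of $\nu$ under $x\mapsto x(t)-x(s)$ is essentially the paper's argument (the paper cites \cite[Proposition~6.1]{BarndorffNielsen2006InfiniteDF} to place the finite-dimensional L\'evy measure on the cone $K_d$, which amounts to the same thing). One small omission: you use that the atoms are \emph{non-negative} in step~(2) but only proved \emph{monotonicity} in step~(1); non-negativity of $\nu$-a.e.\ $x$ needs the identical projection argument applied to $H_t\ge 0$ at each rational $t$.

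For step~(2) the paper takes a shorter path. You correctly note that \cite[Theorem~3.4]{rosinskiinfdivproc} does not apply to $M^0_\infty$ because it is not a group; but the paper observes that $C_0:=\{x\in D^\infty(\RR):\lim_{t\to-\infty}x(t)=0\}$ \emph{is} an algebraic group under addition (and a standard Borel space), so the argument of \cite[Theorem~3.4]{rosinskiinfdivproc} does apply to $C_0$ and yields an exact representation of $\nu$ on $C_0$ directly. The decomposition $M^0_\infty=C_0\cap\{x\text{ non-decreasing}\}$ then lets the paper handle the two constraints in the opposite order to yours: first restrict to $C_0$ by the group-theoretic machinery, then show $\nu$ vanishes on non-monotone paths by finite-dimensional projection. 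Your Poisson-atom argument for $C_0$ is valid, but it presupposes the \emph{uncompensated} representation $H_t=b(t)+\int x(t)\,N(\mathrm{d}x)$ with $b\ge 0$, which in turn relies on already knowing that $\nu$-a.e.\ $x$ is non-negative (so that $\int\min\{x(t),1\}\,\nu(\mathrm{d}x)<\infty$ suffices to drop the compensator and the residual drift is $\ge 0$). This is fine---you have step~(1) and non-negativity available---but the ordering of the argument is more delicate than in the paper, where the $C_0$ restriction requires no prior knowledge of the sign of the paths.

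Step~(3) matches the paper's treatment.
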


\begin{proof}
The proof can be found in Appendix \ref{appendixproofs}.
\end{proof}

\begin{rem}[Id-process with L\'evy measure on $M_\infty^0$ has extended chronometer version]
By a similar reasoning as in the proof of Proposition \ref{propositionltchronometer} it is also possible to prove that every driftless c\`adl\`ag id-process with L\'evy measure concentrated on nnnd c\`adl\`ag functions has a version that is non-decreasing and non-negative. We omit a proof of this statement, since this fact will not be used in our paper.
\end{rem}

\begin{rem}[Condition $(\Diamond^\prime 1)$ corresponds to vanishing functions in L\'evy measure]
The proof of Proposition \ref{propositionltchronometer} shows that the L\'evy measure of an id-process satisfying Condition $(\Diamond^\prime 1)$  is concentrated on c\`adl\`ag paths vanishing at $-\infty$. If we omit Condition $(\Diamond^\prime 1)$ this is no longer the case, which is the reason why we later need to omit this condition in Corollary \ref{corollarymainresult}. 
\end{rem}

Interestingly, we can infer $\pp(H_t=\infty)$ from the associated L\'evy measure, as the next example shows. 

\begin{ex}[Probability of a jump to $\infty$]
\label{examplekilledprocess}
Consider an extended chronometer $(H_t)_{t\in\RR}$ satisfying Condition $(\Diamond^\prime)$ with L\'evy measure $\nu$ and drift $b$. We want to investigate $\pp(H_t=\infty)$ for every $t\in\RR$, which is equivalent to investigating $\pp\lc X_i<t\ \forall\ i\in\N\rc$ for every $t\in\RR$, where $\bmX$ denotes the exchangeable sequence  constructed via Equation (\ref{constructionexchangeseq}).

The (one-dimensional) L\'evy--Khintchine representation of the infinitely divisible random variable $H_t$ yields $\pp(H_t=\infty)=1-\exp(-\upsilon_t(\infty))$, where $\upsilon_t$ denotes the (one-dimensional) L\'evy measure of $H_t$. Proposition~\ref{propositionltchronometer} shows that $\upsilon_t(\infty)=\nu\lc \{x\in M_\infty^0 \mid x(t)=\infty\}\rc$. Therefore, 
$$\pp(H_t=\infty)=1-\exp\lc -\nu\lc \{x\in M_\infty^0 \mid x(t)=\infty\}\rc\rc.$$

We emphasize that, e.g.\ in contrast to additive processes, $H$ cannot be decomposed into $H=H^{(1)}+H^{(2)}$, where $H^{(1)}$ is always finite and independent of $H^{(2)}\in\{0,\infty\}^\RR$. Therefore, jumps to $\infty$ do not occur independently of the path behavior of the process in general. Let us verify this claim by an application of Example~\ref{examplefinitelevymeasure}. Decompose $\nu$ into $\nu=\nu_\infty+\nu_f$, where $\nu_f:=\nu\lc \cdot\cap \{x\in M_\infty^0 \mid x(t)<\infty \text{ for all } t\in\RR\}\rc$ is concentrated on finite paths and $v_\infty:=\nu\big(\cdot \cap \{x\in M_\infty^0\mid$ $x(t)=\infty \text{ for some } t\in\RR\}\big) $ is concentrated on paths that jump to $\infty$. Now, assuming that $\nu_\infty$ is a finite measure with total mass $c$, we define $H^{(2)}$ as the id-process with L\'evy measure $\nu_\infty$ and $H^{(1)}$ as an independent id-process with L\'evy measure $\nu_f$ and drift $b$. Obviously, $H\sim H^{(1)}+H^{(2)}$, where $H^{(1)}\in D(\RR)$ has finite sample paths and 
$$H^{(2)}\sim \sum_{i=1}^N h^{(2,i)},$$
where $(h^{(2,i)})_{i\in\N}$ denotes an i.i.d.\ sequence of c\`adl\`ag processes with distribution $\nu_\infty /c$ and $N$ denotes an independent Poisson random variable with mean $c$. Since the paths of $h^{(2,1)}$ can follow every increasing (c\`adl\`ag) path we observe that $H^{(2)}_t$ may take all finite values. Thus, in general, $H$ cannot be decomposed into a finite process $H^{(1)}$ and a ``killing'' process $H^{(2)}\in\{0,\infty\}^\RR$. The case of infinite $\nu_\infty$ follows from $\nu_{t,\infty}:=\nu\big( \cdot\cap \{x\in M_\infty^0 \mid x(t)=\infty\}\big)$, where $\nu_{t,\infty}$ is a finite measure for all $t\in\RR$ and $\nu_\infty=\lim_{t\to\infty}\nu_{t,\infty}$.
\end{ex}

\section{Main results: Linking exchangeable min-id sequences to extended chronometers}
\label{sectionmainresults}
After having collected all auxiliary results, we now formulate the main contributions of this paper.

\subsection{Extendible min-id distributions satisfying $(\Diamond)$ are in one-to-one correspondence with extended chronometers satisfying $(\Diamond^\prime)$}
\label{sectionextminidcorrespondstoextchron}

We start with the characterization of extendible min-id distributions satisfying Condition $(\Diamond)$. Recall that de Finetti's Theorem implies that every exchangeable sequence $\bmX\in\bR^\N$ is in one-to-one correspondence with a nnnd c\`adl\`ag process. We show that the class of stochastic processes corresponding to min-id sequences satisfying Condition $(\Diamond)$ is precisely the class of extended chronometers satisfying Condition $(\Diamond^\prime)$.

\begin{thm}[Extendible min-id distributions correspond to extended chronometers]
\label{theoremchroncorrespondstominiddist}
The following are equivalent:
\begin{enumerate}
\item $\bmX$ is an exchangeable min-id sequence satisfying Condition $(\Diamond)$.
\item There exists an extended chronometer $(H_t)_{t\in\RR}\in D^\infty(\RR)$
satisfying Condition $(\Diamond^\prime)$ such that $\bmX\sim \big(\inf \{t\in\RR\mid H_t\geq E_i\}\big)_{i\in\N}$, where $(E_i)_{i\in\N}$ are i.i.d.\ unit exponential and independent of $H$.  
\end{enumerate}
Moreover, the law of $H$ is uniquely associated to the law of $\bmX$.
\end{thm}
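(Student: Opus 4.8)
The plan is to establish the equivalence of (1) and (2) by showing that in each direction the relevant object is obtained through the de Finetti construction~(\ref{constructionexchangeseq}); once this is done, the uniqueness assertion is nothing but the uniqueness clause of de Finetti's Theorem, so the whole task reduces to the two implications. Condition~$(\Diamond)$ for $\bmX$ and Condition~$(\Diamond^\prime)$ for $H$ will be matched up at the end using the Remark on the equivalence of these two conditions.

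For the implication (2)~$\Rightarrow$~(1): given an extended chronometer $H$ satisfying $(\Diamond^\prime)$, I would form $\bmX$ via~(\ref{constructionexchangeseq}); exchangeability is part of de Finetti's Theorem, and the survival function of its $d$-dimensional margin is $\fb_d(\bmt)=\ee[\prod_{i=1}^d e^{-H_{t_i}}]$. To obtain min-id-ness it suffices, by Definition~\ref{definitionmaxidminid}, to check that $\fb_d^{\,t}$ is a survival function for every $t>0$. For $t=1/n$ this is exactly Corollary~\ref{corcadlagsums}: writing $H\sim\sumn H^{(i,1/n)}$ with i.i.d.\ extended chronometers and using independence shows that $\fb_d$ is the $n$-fold product of the survival function $\bmt\mapsto\ee[\prod_{i=1}^d e^{-H^{(1,1/n)}_{t_i}}]$. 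Since the set of admissible exponents $t$ is closed under addition (the componentwise minimum of independent random vectors with survival functions $\fb_d^{\,s}$, $\fb_d^{\,t}$ has survival function $\fb_d^{\,s+t}$), it contains $\QQ_{>0}$; and it is closed under decreasing limits, because pointwise limits preserve right-continuity, componentwise monotonicity and the rectangle inequalities, while $\fb_d^{\,t}(\bmt)\to1$ as $\bmt\to-\binfty$ by $(\Diamond^\prime 1)$. Hence the exponent set is all of $(0,\infty)$, so $\bmX$ is min-id, and then Condition~$(\Diamond)$ follows from the equivalence remark. (Alternatively, min-id-ness can be read off directly from the L\'evy--Khintchine form in Proposition~\ref{propositionltchronometer}, since $(tb,t\nu)$ is again an admissible drift/L\'evy-measure pair on $M_\infty^0$ for every $t>0$.)

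For the substantive implication (1)~$\Rightarrow$~(2): starting from an exchangeable min-id sequence $\bmX$ satisfying $(\Diamond)$ with survival function $\fb$, de Finetti's Theorem already furnishes a nnnd c\`adl\`ag process $H\in D^\infty(\RR)$ with $\ee[\prod_{i=1}^d e^{-H_{t_i}}]=\fb_d(\bmt)$, and the equivalence remark shows it satisfies $(\Diamond^\prime)$ ($X_i>-\infty$ a.s.\ forces $\lim_{t\to-\infty}H_t=0$ a.s., and $\pp(X_i>t)=\ee[e^{-H_t}]>0$ for all $t$ forces $\pp(H_t=\infty)<1$). What remains is infinite divisibility: for each $n$ a decomposition $H\sim\sumn H^{(i,1/n)}$ into i.i.d.\ \emph{c\`adl\`ag} processes. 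Fix $n$. As $\fb$ is min-id, each $\fb_d^{\,1/n}$ is a survival function; the family $(\fb_d^{\,1/n})_d$ is consistent (restriction to a sub-margin commutes with raising to the power $1/n$) and exchangeable, and its $d$-margins have exponent measure $\tfrac1n\mu_d$, hence satisfy $(\Diamond)$ by Proposition~\ref{propositionexponentmeasureofminid}. Kolmogorov's extension theorem therefore produces an exchangeable (min-id, $(\Diamond)$-compatible) sequence $\bmX^{(1/n)}$, and de Finetti's Theorem assigns to it a nnnd c\`adl\`ag process $H_n$ with $\ee[\prod_{i=1}^d e^{-H_{n,t_i}}]=\fb_d^{\,1/n}(\bmt)$. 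Taking i.i.d.\ copies $H_n^{(1)},\dots,H_n^{(n)}$ of $H_n$ and putting $G:=\sum_{j=1}^n H_n^{(j)}$ — a nnnd c\`adl\`ag process — the de Finetti construction applied to $G$ yields an exchangeable sequence $\bmY$ whose $d$-dimensional survival function is
\[
\ee\Big[\prod_{i=1}^d e^{-G_{t_i}}\Big]=\prod_{j=1}^n\ee\Big[\prod_{i=1}^d e^{-H_{n,t_i}^{(j)}}\Big]=\big(\fb_d^{\,1/n}(\bmt)\big)^n=\fb_d(\bmt),
\]
so $\bmY\sim\bmX$. By the uniqueness clause of de Finetti's Theorem the de Finetti process of $\bmY$ — which is $G$ — has the same law as that of $\bmX$ — which is $H$. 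Thus $H\sim G=\sum_{j=1}^n H_n^{(j)}$ with $H_n^{(j)}$ i.i.d.\ c\`adl\`ag; as $n$ was arbitrary, $H$ is infinitely divisible, hence an extended chronometer, and it satisfies $(\Diamond^\prime)$ as already checked.

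I expect the main obstacle to be this last step: min-infinite divisibility of $\bmX$ only supplies an i.i.d.\ ``min-decomposition'' of the \emph{finite-dimensional} laws, and it has to be promoted to an i.i.d.\ \emph{sum}-decomposition of the whole path $H$ inside $D^\infty(\RR)$, not merely inside $\RR^\RR$. Any decomposition obtained by brute force lives a priori only in the path space $\RR^\RR$; the device that keeps everything c\`adl\`ag is to never ``divide'' $H$ directly, but instead to re-derive the $n$ summands $H_n$ by applying de Finetti's Theorem to the auxiliary sequence with survival function $\fb^{1/n}$, and then to let the uniqueness clause of de Finetti glue $\sum_j H_n^{(j)}$ back onto $H$. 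A secondary point requiring care is the verification that $\fb^{1/n}$ still defines a consistent, exchangeable family of survival functions all of whose margins satisfy Condition~$(\Diamond)$, which is what legitimizes the Kolmogorov extension and the subsequent application of de Finetti's Theorem.
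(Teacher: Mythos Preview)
Your proof is correct and follows the same overall architecture as the paper: in both directions one passes through de Finetti's Theorem, and for $(1)\Rightarrow(2)$ one first builds the exchangeable sequence $\bmX^{(1/n)}$ with survival function $\fb^{1/n}$, applies de Finetti to obtain a c\`adl\`ag process $H_n$, and then identifies $\sum_{j=1}^n H_n^{(j)}$ with $H$. The only substantive difference is in this last identification. You invoke the uniqueness clause of de Finetti's Theorem: $G=\sum_j H_n^{(j)}$ is an nnnd c\`adl\`ag process whose associated exchangeable sequence has the same law as $\bmX$, hence $G\sim H$. The paper instead computes the multivariate Laplace transforms of $H$ and of $\sum_j H^{(j,1/n)}$ at integer arguments $\bm z\in\N^d$, observing that $\ee[\exp(-\sum_j z_j H_{t_j})]=\fb_{\sum_j z_j}(t_1,\ldots,t_1,\ldots,t_d,\ldots,t_d)$, and then appeals to the fact that the Laplace transform of a non-negative random vector is determined by its values on $\N^d$. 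Your route is more conceptual and avoids this analytic device; the paper's route is more hands-on but makes the link between the survival function of higher-dimensional margins and the Laplace transform of $H$ explicit. A minor point: you make the Kolmogorov extension step for $\bmX^{(1/n)}$ explicit, whereas the paper simply asserts the existence of i.i.d.\ \emph{sequences} $\bmX^{(i,1/n)}$ from the min-id property without spelling this out; your version is cleaner here.
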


\begin{proof}
The proof can be found in Appendix \ref{appendixproofs}.
\end{proof}

It is worth noting that Lemma~\ref{lemmatrafomaxtominid} can be translated into a time-change of the extended chronometer.
\begin{cor}[Marginal transformation of exchangeable min-id sequence is time-change of the chronometer]
\label{cormarginaltrafoistimechange}
Consider an exchangeable min-id sequence $\bmX\in (\infty,\infty]^\N$ and a left-continuous non-decreasing transformation $f$. Let $H^\bmX$ denote the extended chronometer corresponding to $\bmX$ and $H^{f(\bmX)}$ denote the chronometer corresponding to $f(\bmX)$. Then $H^{f(\bmX)} \sim H^\bmX \circ f^{\leftharpoonup}$, where $f^{\leftharpoonup}(t):=\inf\{s\in\RR\mid f(s)> t\}$.
\end{cor}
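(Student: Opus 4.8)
\emph{Proof sketch (proposal).} The plan is to reduce the statement to a deterministic identity between generalized inverses and then to close it by the uniqueness part of de Finetti's Theorem (equivalently, of Theorem~\ref{theoremchroncorrespondstominiddist}). Since $f$ is non-decreasing, Lemma~\ref{lemmatrafomaxtominid} shows that $f(\bmX)$ is again an exchangeable min-id sequence, so it corresponds to a unique nnnd c\`adl\`ag process $H^{f(\bmX)}$ (its extended chronometer), and it is enough to show that the process $\tilde H:=H^\bmX\circ f^{\leftharpoonup}$ -- with the conventions $H^\bmX_{\pm\infty}:=\lim_{t\to\pm\infty}H^\bmX_t$ and $f(\infty):=\infty$, the latter reflecting that $\infty$ is an absorbing state for min-id sequences -- is nnnd, c\`adl\`ag, and reproduces the law of $f(\bmX)$ through Equation~(\ref{constructionexchangeseq}); de Finetti's uniqueness then forces $H^{f(\bmX)}\sim\tilde H$. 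That $\tilde H$ is admissible is quick: $f^{\leftharpoonup}$ is non-decreasing, and it is right-continuous because $\{u:f(u)>s\}=\bigcup_{r>s}\{u:f(u)>r\}$; hence $\tilde H$, being the composition of the c\`adl\`ag non-decreasing map $H^\bmX$ with the right-continuous non-decreasing map $f^{\leftharpoonup}$, is again c\`adl\`ag, is obviously non-negative and non-decreasing, and vanishes at $-\infty$ because $\bmX\in(-\infty,\infty]^\N$ forces $H^\bmX_{-\infty}=0$ a.s.\ (the Remark on the equivalence of Conditions $(\Diamond)$ and $(\Diamond^\prime)$). Its infinite divisibility need not be verified directly; it follows a posteriori from the distributional identity below.

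The core is the following pathwise identity: for every non-decreasing c\`adl\`ag $h$ and every level $e>0$, writing $X:=\inf\{t\in\RR:h_t\ge e\}$, one has $f(X)=\inf\{s\in\RR:h_{f^{\leftharpoonup}(s)}\ge e\}$. To see this, right-continuity and monotonicity of $h$ give $\{t\in\RR:h_t\ge e\}=[X,\infty)$, so $h_{f^{\leftharpoonup}(s)}\ge e$ is equivalent to $f^{\leftharpoonup}(s)\ge X$; and $f^{\leftharpoonup}(s)=\inf\{u:f(u)>s\}\ge X$ holds iff $f(u)\le s$ for every $u<X$, i.e.\ iff $f(X-)\le s$, which by left-continuity of $f$ is the same as $f(X)\le s$. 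Hence the set on the right is the closed half-line $[f(X),\infty)$, whose infimum is $f(X)$. The value $X=-\infty$ cannot occur because $\bmX\in(-\infty,\infty]^\N$, and $X=+\infty$ is absorbed by the conventions above, the only borderline configuration $h_\infty=e$ being immaterial since, for $h$ a path of $H^\bmX$ and $e=E_i$, independence of $E_i$ and its continuous law give $\pp(H^\bmX_\infty=E_i)=0$. Applying this identity pathwise to $H^\bmX$ with $e=E_i$, simultaneously over $i\in\N$, in the representation $X_i=\inf\{t:H^\bmX_t\ge E_i\}$ of Equation~(\ref{constructionexchangeseq}) yields $f(X_i)=\inf\{s:\tilde H_s\ge E_i\}$ almost surely, i.e.\ $f(\bmX)\sim(\inf\{s:\tilde H_s\ge E_i\})_{i\in\N}$, and uniqueness concludes $H^{f(\bmX)}\sim\tilde H=H^\bmX\circ f^{\leftharpoonup}$.

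The main obstacle I anticipate is exactly the bookkeeping in the previous paragraph: one must match the one-sided continuity conventions carefully -- left-continuity of $f$ is precisely what makes $\{s:f^{\leftharpoonup}(s)\ge X\}$ the \emph{closed} half-line $[f(X),\infty)$, so that its infimum is attained and $\tilde H$ reaches level $E_i$ exactly at $f(X)$ -- and one must dispose correctly of the endpoints $\pm\infty$ (the value $-\infty$ excluded by hypothesis, the value $+\infty$ absorbed by the convention $f(\infty)=\infty$ up to a $\pp$-null exceptional set), as well as check that $H^\bmX\circ f^{\leftharpoonup}$ genuinely lands in $D^\infty(\RR)$ even where $f^{\leftharpoonup}$ attains the values $\pm\infty$.
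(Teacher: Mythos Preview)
Your proof is correct, but the paper's route is considerably shorter. The paper simply records the single identity
\[
\pp\big( f(X_1)>t_1,\ldots,f(X_d)>t_d\big)=\pp\big( X_1>f^{\leftharpoonup}(t_1),\ldots,X_d>f^{\leftharpoonup}(t_d)\big),
\]
which is the event-level version of the same $f$/$f^{\leftharpoonup}$ duality you exploit (namely, for left-continuous non-decreasing $f$ one has $f(x)>t$ iff $x>f^{\leftharpoonup}(t)$). Plugging in the de Finetti representation of $\bmX$ gives $\pp(f(\bmX)>\bm t)=\ee\big[\exp(-\sum_i H^{\bmX}_{f^{\leftharpoonup}(t_i)})\big]$, and uniqueness of the chronometer finishes.

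Your argument instead establishes the stronger pathwise statement $f(X_i)=\inf\{s:\tilde H_s\ge E_i\}$ via the representation in Equation~(\ref{constructionexchangeseq}). This buys you an almost-sure coupling rather than just a distributional identity, and forces you to verify explicitly that $H^{\bmX}\circ f^{\leftharpoonup}\in D^\infty(\RR)$ and to dispose of the $\pm\infty$ endpoints---bookkeeping the paper sidesteps by working purely with survival functions. Both approaches rest on the same generalized-inverse duality and close with the same uniqueness appeal; the paper's is simply the more economical phrasing, while yours makes the path regularity of the time-changed chronometer explicit.
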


\begin{proof}
The claim follows from the identity
$$\pp\big( f(X_1)>t_1,\ldots,f(X_d)>t_d\big)=\pp\big( X_1>f^{\leftharpoonup}(t_1),\ldots,X_d>f^{\leftharpoonup}(t_d)\big).$$
\end{proof}

The following examples present two interesting applications of Corollary \ref{cormarginaltrafoistimechange}.

\begin{ex}[$\N_0$-valued exchangeable min-id sequences]
Consider the non-decreasing left-continuous transformation $x\mapsto \lceil x\rceil:= \min\{n\in\N_0\mid x\leq n\}$. Lemma~\ref{lemmatrafomaxtominid} implies that each exchangeable min-id sequence $\bmX$ can be transformed into an $\N_0$-valued exchangeable min-id sequence $\lceil \bmX \rceil$. Corollary~\ref{cormarginaltrafoistimechange} shows that the extended chronometer $H^{\lceil\bmX\rceil}$ associated with $\lceil \bmX \rceil$ can be obtained via a time-change of the extended chronometer $H^\bmX$ associated with $\bmX$. Thus, $H^{\lceil\bmX\rceil}\sim H^\bmX \circ \lfloor \cdot \rfloor$, where $\lfloor x\rfloor:=\lceil x \rceil^\leftharpoonup =\max \{n\in\N_0 \mid x\geq n\}$. Defining $\lc J_i\rc_{i\in\N}:=\lc H^\bmX_i-H^\bmX_{i-1}\rc_{i\in\N}$ we observe that $$H^{\lceil\bmX\rceil}_t=H^\bmX_{\lfloor t\rfloor}=\lc H^\bmX_0+\sum_{i=1}^{\lfloor t \rfloor} J_i \rc \id_{\{t\geq 0\}}$$
can be represented as a pure jump process. If $H^\bmX_0=0$ and $H^\bmX$ has stationary and independent increments $H^{\lceil\bmX\rceil}$ is known as a random walk and the sequence $\lceil \bmX \rceil$ follows a multivariate narrow-sense geometric distribution, see \cite{Maimultivgeom}. In this case $\bmX$ has $d$-dimensional marginal distributions 
$$ (X_1,\ldots,X_d)\sim \big( \inf\{E_I \mid i\in I\}\big)_{1\leq i\leq d},$$  
where $(E_I)_{I\subset\{1,\ldots,d\}}$  is a collection of independent geometrically distributed random variables with parameters $(1-p_I)$ such that $p_I$ only depends on $\vert I\vert$.
Moreover, the associated extended chronometer $H^{\lceil \bmX \rceil}$ is a random walk with infinitely divisible i.i.d.\ jumps $J_i\sim H^\bmX(1)$.
\end{ex}

\begin{ex}[From min- to max-id]
\label{examplemintomaxid}
Consider an exchangeable min-id sequence $\bmX$ with associated chronometer $H^\bmX$ and a continuous strictly decreasing transformation $f$ with its corresponding inverse $f^{-1}$. Lemma~\ref{lemmatrafomaxtominid} shows that $\bm Y^{(f)}:=f(\bmX)$ is an exchangeable max-id sequence. According to de Finetti's Theorem there exists a random distribution function $F^{(f)}$ such that
$$\bm Y^{(f)}\sim \big( \inf\{ t\in\RR \mid F^{(f)}_t\geq U_i\}\big)_{i\in\N} $$
for an i.i.d.\ sequence of Uniform$(0,1)$ distributed random variables $(U_i)_{i\in\N}$. Noting that 
\begin{align*}
&\pp(Y^{(f)}_1\leq t_1,\ldots,Y^{(f)}_d\leq t_d)=\pp\lc X_1 \geq f^{-1}(t_1),\ldots,X_d\geq f^{-1}(t_d)\rc\\
&=\lim_{\bm z\searrow \bm t} \pp\lc X_1>f^{-1}( z_1),\ldots,X_d> f^{-1}(z_d)\rc 
= \lim_{\bm z\searrow \bm t}\ee\lk \exp\lc -\sum_{i=1}^d H^{\bmX}_{f^{-1}(z_i)}\rc\rk    \\
&=\ee\lk \exp\lc -\sum_{i=1}^d \lim_{ z_i\searrow t_i}H^{\bmX}_{f^{-1}(z_i)}\rc\rk 
\end{align*}
yields that $\lc F^{(f)}_t\rc_{t\in\RR}\sim \lc \exp\lc -\lim_{z\searrow t} H^{\bmX}_{f^{-1}(z)}\rc\rc_{t\in\RR}.$ Therefore,
$$\bm Y^{(f)}\sim \lc \inf\bigg\{ t\in\RR \ \bigg\vert -\log\lc 1-\exp\lc -\lim_{z\searrow t} H^{\bmX}_{f^{-1}(z_i)}\rc\rc\geq E_i\bigg\}\rc_{i\in\N}, $$
where $(E_i)_{i\in\N}$ is an i.i.d.\ sequence of unit exponential random variables.
\end{ex}

\subsection{The exponent measure of an exchangeable min-id sequence satisfying $(\Diamond)$ is a mixture of product probability measures}

We present an analogue of de Finetti's Theorem for exponent measures of exchangeable min-id sequences satisfying $(\Diamond)$.

First, we need to introduce some notation. For any distribution function $G$ of a random variable on $(-\infty,\infty]$, define $\pp_G$ as the probability measure associated with the distribution function $G$. Furthermore, $\otimes_{i=1}^d \pp_G$ denotes the probability measure on $(-\infty,\infty]^d$ associated with $d\in\N\cup\{\infty\}$ i.i.d.\ copies of random variables with distribution $\pp_G$. Define the space of distribution functions of random variables on $(-\infty,\infty]$ as
$\overline{M}^0_\infty:=\big\{G:\RR\to [0,1]\mid G\text{ is distribution function of a random variable on } (-\infty,\infty] \big\}$. Let $\gamma$ denote a measure on $\overline{M}^0_\infty$ and define a mixture of product probability measures by 
\begin{align}
 \mu_{\gamma,d}(\cdot):=\int_{\overline{M}^0_\infty} \otimes_{i=1}^d \pp_G(\cdot)\ \gamma(\mathrm{d}G) .\label{eqnconstmugamma}
\end{align}
Additionally, for any non-decreasing function $b\in M^0_\infty$, define $\mu_{b,d}$ as the exponent measure of $d\in\N\cup\{\infty\}$ i.i.d.\ copies of random variables on $(-\infty,\infty]$ with survival function $\exp(-b(\cdot))$ whose existence is ensured by Proposition \ref{propositionextendibleexponentmeasure}.

\begin{thm}[Exponent measure of exchangeable min-id sequence]
\label{theoremexponentmeasureisiidmixture}
The following are equivalent:
\begin{enumerate}
\item $\bmX$ is an exchangeable min-id sequence satisfying $(\Diamond)$.
\item There exist a function $b\in M^0_\infty$ and a measure $\gamma$  on $\overline{M}^0_\infty$ satisfying $\gamma\lc 0_{D^\infty(\RR)}\rc=0$ and $\int_{\overline{M}^0_\infty} G(t)\gamma(\mathrm{d}G)<\infty$ for all $t\in\RR$ such that the exponent measure of $\bmX$ is given by
\begin{align*}
\mu\lc A  \rc=\mu_{b,\infty} (A)+ \mu_{\gamma,\infty}(A)
\end{align*}
for every $A\in\bc(E^\N_\binfty)$.
\end{enumerate} 
Moreover, the L\'evy measure and drift of the extended chronometer associated with $\bmX$ are given by $\nu(A)=\gamma(\{G\in \overline{M}^0_\infty\mid G=1-\exp(-x(\cdot))\text{ for some } x\in A\})$ for every $A\in\bc\lc D^\infty(\RR)\rc$ and $b(t)=\mu_{b,1} \lc [-\infty,t]\rc$.
\end{thm}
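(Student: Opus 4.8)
The proof will establish the equivalence of (1) and (2) by going through the extended chronometer correspondence of Theorem~\ref{theoremchroncorrespondstominiddist}, translating the pathwise Lévy--Khintchine representation of Proposition~\ref{propositionltchronometer} into a statement about exponent measures. The key observation is that the map $x \mapsto 1 - \exp(-x(\cdot))$ is a bijection from $M^0_\infty$ onto $\overline{M}^0_\infty$, so a measure $\nu$ on $M^0_\infty$ pushes forward to a measure $\gamma$ on $\overline{M}^0_\infty$, and vice versa; the integrability condition $\int \min\{x(t),1\}\,\nu(\mathrm{d}x) < \infty$ corresponds to $\int G(t)\,\gamma(\mathrm{d}G) < \infty$ because $1 - e^{-x(t)}$ and $\min\{x(t),1\}$ are comparable up to universal constants. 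This identification is what the final ``moreover'' sentence asserts, so proving it is essentially built into proving $(1)\Leftrightarrow(2)$.

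\textbf{Steps.} First I would assume (1) and invoke Theorem~\ref{theoremchroncorrespondstominiddist} to obtain the extended chronometer $H$ satisfying $(\Diamond^\prime)$ with drift $b$ and Lévy measure $\nu$ on $M^0_\infty$ as in Proposition~\ref{propositionltchronometer}. Next I would compute the survival function of $\bmX$: by de Finetti and Proposition~\ref{propositionltchronometer}, for $\bmt \in [-\infty,\infty)^d$,
\begin{align*}
\pp(X_1 > t_1,\ldots,X_d > t_d) = \ee\Big[e^{-\sumd H_{t_i}}\Big] = \exp\Big(-\sumd b(t_i) - \int_{M^0_\infty}\big(1 - e^{-\sumd x(t_i)}\big)\,\nu(\mathrm{d}x)\Big).
\end{align*}
On the other hand, by Proposition~\ref{propositionexponentmeasureofminid} this survival function equals $\exp(-\mu_d((\bmt,\binfty]^\complement))$, so I need to show $\mu_d((\bmt,\binfty]^\complement) = \sumd b(t_i) + \int_{M^0_\infty}(1 - e^{-\sumd x(t_i)})\,\nu(\mathrm{d}x)$, i.e. that $\mu_{b,d} + \mu_{\gamma,d}$ reproduces these values on the generating sets $(\bmt,\binfty]^\complement$. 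For $\mu_{\gamma,d}$: if $G = 1 - \exp(-x(\cdot))$, then $\otimes_{i=1}^d \pp_G\big((\bmt,\binfty]^\complement\big) = 1 - \prodd \big(1 - G(t_i)\big) = 1 - \prodd e^{-x(t_i)} = 1 - e^{-\sumd x(t_i)}$, so integrating against $\gamma$ (the pushforward of $\nu$) gives exactly the integral term; the drift term is handled identically since $\mu_{b,d}$ is by construction the exponent measure of i.i.d.\ copies with survival function $\exp(-b(\cdot))$, whose value on $(\bmt,\binfty]^\complement$ is $1 - \prodd e^{-b(t_i)}$ --- wait, this gives $1 - e^{-\sumd b(t_i)}$, not $\sumd b(t_i)$, so I must be more careful: in fact the correct bookkeeping is that $b$ as a \emph{drift} of the chronometer corresponds to a drift \emph{measure} $\mu_{b,\infty}$ whose defining property (via Proposition~\ref{propositionextendibleexponentmeasure} applied to the deterministic process $t \mapsto b(t)$) already encodes $\mu_{b,d}((\bmt,\binfty]^\complement) = 1 - e^{-\sumd b(t_i)}$, and the total survival function multiplies these contributions, so $\mu_d = \mu_{b,d} + \mu_{\gamma,d}$ holds on the $\pi$-system of sets $(\bmt,\binfty]^\complement$ for $\bmt \in \RR^d$ together with sets touching the coordinate hyperplanes at $-\infty$. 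Since both sides are Radon exponent measures agreeing on a generating $\pi$-system, and one checks $\sigma$-finiteness (exhausting $E^\N_\binfty$ by sets $(\bmx,\binfty]$ with $\bmx \to -\binfty$), uniqueness of measures extends the agreement to all of $\bc(E^\N_\binfty)$; consistency across dimensions is automatic from Proposition~\ref{propositionextendibleexponentmeasure}. The converse $(2)\Rightarrow(1)$ runs the same computation backwards: given $b$ and $\gamma$, define $\nu$ as the pushforward of $\gamma$ under $G \mapsto -\log(1-G(\cdot))$, check it is a Lévy measure on $M^0_\infty$ using $\int G(t)\,\gamma(\mathrm{d}G) < \infty \Leftrightarrow \int \min\{x(t),1\}\,\nu(\mathrm{d}x) < \infty$, build the chronometer $H$ from $(b,\nu)$ via Proposition~\ref{propositionltchronometer}, and read off from the displayed computation that the resulting $\bmX$ has exponent measure $\mu_{b,\infty} + \mu_{\gamma,\infty}$; Theorem~\ref{theoremchroncorrespondstominiddist} guarantees $\bmX$ is an exchangeable min-id sequence satisfying $(\Diamond)$. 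Finally, the ``moreover'' formulas for $\nu$ and $b(t) = \mu_{b,1}([-\infty,t])$ are exactly the identifications used in the construction: $\nu(A) = \gamma(\{G : G = 1 - \exp(-x(\cdot)),\, x \in A\})$ is the definition of the pushforward, and $b(t) = \mu_{b,1}([-\infty,t])$ follows since $\mu_{b,1}$ is the exponent measure of a single random variable with survival function $e^{-b(\cdot)}$, whose exponent measure on a one-dimensional half-line is $\mu_{b,1}([-\infty,t]) = -\log(e^{-b(t)}) = b(t)$.

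\textbf{Main obstacle.} The genuinely delicate step is the measure-theoretic uniqueness argument: $\mu$ may be infinite, so agreement on the $\pi$-system $\{(\bmt,\binfty]^\complement : \bmt \in \RR^d\}$ alone does not force agreement everywhere, and the sets $(\bmt,\binfty]^\complement$ are \emph{complements} of ``upper'' rectangles rather than the rectangles themselves --- one has to pass to the generating $\pi$-system of sets of the form $(\bmx,\binfty]$ (on which both measures are finite and which do form a $\pi$-system generating $\bc(E^\N_\binfty)$ after intersecting with the coordinate structure of $E^\N_\binfty$) and verify $\sigma$-finiteness there, exactly the subtlety already flagged in the remark after Proposition~\ref{propositionexchangeabilityexponentmeasure}. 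A secondary technical point is bookkeeping the drift: one must verify that the measure $\mu_{b,\infty}$ constructed from the deterministic nnnd path $b$ via Proposition~\ref{propositionextendibleexponentmeasure} is well-defined (i.e.\ that $t \mapsto \exp(-b(t))$ is a genuine min-id survival function, which it is, being the product survival function of i.i.d.\ copies) and that its interaction with $\mu_{\gamma,\infty}$ is additive at the level of exponent measures --- this follows because survival functions \emph{multiply} under independent superposition while exponent measures \emph{add}, but it deserves an explicit line. I expect the rest --- the pushforward bijection, the integrability equivalence via $c_1\min\{u,1\} \le 1 - e^{-u} \le \min\{u,1\}$, and the per-dimension consistency --- to be routine.
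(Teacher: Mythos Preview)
Your plan is essentially the paper's own: invoke Theorem~\ref{theoremchroncorrespondstominiddist} and Proposition~\ref{propositionltchronometer} to obtain $(b,\nu)$, push $\nu$ forward to $\gamma$ under $x\mapsto 1-e^{-x(\cdot)}$, match the survival function of $\bmX$ with $\mu_{b,d}+\mu_{\gamma,d}$ on sets $(\bmt,\binfty]^\complement$, extend to all of $\bc(E^d_\binfty)$ by the finite-approximation device of Proposition~\ref{propositionexchangeabilityexponentmeasure}, and verify the integrability equivalence via $c_1\min\{u,1\}\le 1-e^{-u}\le \min\{u,1\}$. The obstacle you flag (uniqueness for possibly infinite measures from values on complements of upper rectangles) is precisely what the paper handles by citing the proof of Proposition~\ref{propositionexchangeabilityexponentmeasure}.

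One bookkeeping slip to fix: you write $\mu_{b,d}\big((\bmt,\binfty]^\complement\big)=1-e^{-\sumd b(t_i)}$, but $\mu_{b,d}$ is the \emph{exponent} measure of the i.i.d.\ vector with survival function $\prodd e^{-b(t_i)}$, so in fact $\mu_{b,d}\big((\bmt,\binfty]^\complement\big)=-\log\prodd e^{-b(t_i)}=\sumd b(t_i)$, which is exactly the drift term you need and which you compute correctly at the end when you write $\mu_{b,1}([-\infty,t])=b(t)$. The confusion is local and does not affect the structure of the argument.
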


\begin{proof}
The proof can be found in Appendix \ref{appendixproofs}.
\end{proof}

Recall that a min-id sequence has exponent measure supported on $A^{\perp}:=\big\{\bmx\in\bR^\N\mid x_i=\infty \text{ for all but one } i\in\N\big\}$ if and only if it is an i.i.d.\ sequence. Theorem~\ref{theoremexponentmeasureisiidmixture} yields a decomposition of the global exponent measure $\mu$ into $\mu_{b}+\mu_\gamma$, where $\mu_{b}:=\mu_{b,\infty}$ is supported on the set $A^{\perp}$ and $\mu_\gamma:=\mu_{\gamma,\infty}$. Therefore, $\bmX\sim\min \{\bmX^{(1)},\bmX^{(2)}\}$, where $\bmX^{(1)}$ is an i.i.d.\ sequence with exponent measure $\mu_{b}$ and $\bmX^{(2)}$ is an exchangeable min-id sequence with exponent measure $\mu_\gamma$. This raises the question whether the decomposition from Theorem~\ref{theoremexponentmeasureisiidmixture} separates $\mu$ into an independence part $\mu_{b}$ and a dependence part $\mu_\gamma$, which would be a desired feature for modeling purposes. Mathematically this translates to $\mu_{b}$ and $\mu_\gamma$ being singular.


If $\mu_\gamma$ does not have mass on $A^{\perp}$, then $\bmX^{(2)}$ does not contain an independent sequence, i.e.\ $\bmX^{(2)}$ cannot be further decomposed into the minimum of a non-trivial i.i.d.\ sequence and an exchangeable min-id sequence. On the level of the associated extended chronometer this would correspond to the fact that exponent measures $\mu_\gamma$ associated to driftless extended chronometers do not put mass on $A^{\perp}$. Indeed, the next corollary shows that $\mu_\gamma$ never puts mass on $A^{\perp}$, which implies that the decomposition of $\mu$ into $\mu_{b}$ and $\mu_\gamma$ separates dependence from independence.  

\begin{cor}[Decomposition of an exponent measure into dependence and independence]
\label{corollarydecompositionexponentmeasure}


Let $\bmX$ denote the exchangeable min-id sequence associated to the nnnd id-process $H=b+\tilde{H}$, where $b$ denotes the drift of $H$ and $\tilde{H}$ denotes the driftless random component of $H$. Let $\mu^{\tilde{H}}$ denote the exponent measure associated to $\tilde{H}$. Then $\mu_{\gamma}=\mu^{\tilde{H}}$ and the exponent measure $\mu$ of $\bmX$ is given by $\mu=\mu_{b}+\mu_{\gamma}$, where $\mu_{b}$ and $\mu_{\gamma}$ are singular.
\end{cor}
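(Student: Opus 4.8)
The plan is to combine the representation from Theorem~\ref{theoremexponentmeasureisiidmixture} with the uniqueness of the de Finetti correspondence (Theorem~\ref{theoremchroncorrespondstominiddist}) and the uniqueness of the L\'evy--drift decomposition of an extended chronometer. First I would note that, by Proposition~\ref{propositionltchronometer}, the extended chronometer $H$ associated with $\bmX$ admits a unique decomposition $H = b + \tilde H$ into its (deterministic) drift $b \in M^0_\infty \cap D(\RR)$ and a driftless id-process $\tilde H$ whose L\'evy measure $\nu$ lives on $M^0_\infty$. Applying Theorem~\ref{theoremexponentmeasureisiidmixture} to $\tilde H$ (which corresponds to an exchangeable min-id sequence $\bmX^{(2)}$ with vanishing drift) yields that its exponent measure is of the form $\mu_{0,\infty} + \mu_{\gamma,\infty}$ with $\mu_{0,\infty} = 0$ — since the drift function of $\tilde H$ is identically zero, the measure $\mu_{b,\infty}$ associated with it is the zero measure — and with $\gamma$ the image of $\nu$ under $x \mapsto 1 - \exp(-x(\cdot))$, exactly as stated in the last display of Theorem~\ref{theoremexponentmeasureisiidmixture}. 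Hence $\mu^{\tilde H} = \mu_{\gamma,\infty} = \mu_\gamma$.

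Next I would treat the full sequence $\bmX$ corresponding to $H = b + \tilde H$. Because the $E_i$ in the de Finetti construction are the same, the identity $\{X_i > t\} = \{H_t < E_i\}$ together with $H = b + \tilde H$ shows pathwise that $\bmX \sim \min\{\bmX^{(1)}, \bmX^{(2)}\}$ where $\bmX^{(1)}$ is the (conditionally i.i.d., in fact unconditionally i.i.d.) sequence with marginal survival function $\exp(-b(\cdot))$ built from the deterministic drift, and $\bmX^{(2)}$ is independent of $\bmX^{(1)}$ with chronometer $\tilde H$. The exponent measure of a coordinatewise minimum of two independent min-id sequences is the sum of the two exponent measures, so $\mu = \mu_{b,\infty} + \mu^{\tilde H} = \mu_b + \mu_\gamma$, which already matches the claimed formula once we set $\mu_b := \mu_{b,\infty}$. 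Alternatively, and more directly, this is just the $b$-and-$\gamma$ decomposition furnished by Theorem~\ref{theoremexponentmeasureisiidmixture} read off the pair $(b,\gamma)$ attached to $H$, so there is essentially nothing new to prove for the formula itself.

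The remaining and genuinely substantive point is singularity of $\mu_b$ and $\mu_\gamma$. Here I would argue as follows. The measure $\mu_b = \mu_{b,\infty}$ is, by construction, concentrated on $A^\perp = \{\bmx \in \bR^\N \mid x_i = \infty \text{ for all but one } i\}$, since it is the exponent measure of an i.i.d.\ sequence: indeed an i.i.d.\ sequence has all pairwise margins of product form, so its exponent measure charges only the axes, i.e.\ $\mu_b(A^\perp) = \mu_b(E^\N_\binfty)$. It therefore suffices to show $\mu_\gamma(A^\perp) = 0$. For this I would use the mixture representation $\mu_{\gamma,\infty}(\cdot) = \int_{\overline M^0_\infty} \otimes_{i\in\N} \pp_G(\cdot)\, \gamma(\mathrm dG)$ and compute the $\otimes_{i\in\N}\pp_G$-mass of $A^\perp$: a single draw $(Y_i)_{i\in\N}$ from $\otimes_{i\in\N}\pp_G$ lies in $A^\perp$ only if $Y_i = \infty$ for all but one $i$; since the $Y_i$ are i.i.d.\ with law $\pp_G$ and $G \ne 0_{D^\infty(\RR)}$ forces $\pp_G(\{\infty\}) = p < 1$ (equivalently $\lim_{t\to\infty}G(t)>0$), the probability that at most one coordinate is finite is $\lim_{n\to\infty}\big(p^n + n p^{n-1}(1-p)\big) = 0$. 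Hence $\otimes_{i\in\N}\pp_G(A^\perp) = 0$ for $\gamma$-a.e.\ $G$, so $\mu_\gamma(A^\perp) = 0$. Combined with $\mu_b(A^\perp) = \mu_b(E^\N_\binfty)$, this exhibits $A^\perp$ as a set carrying all of $\mu_b$ and none of $\mu_\gamma$, which is precisely the assertion that $\mu_b$ and $\mu_\gamma$ are mutually singular.

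The main obstacle I anticipate is not the singularity computation (which is an elementary Borel--Cantelli-type argument on the i.i.d.\ coordinates once one has the mixture representation and the constraint $\pp_G(\{\infty\})<1$), but rather the bookkeeping needed to be sure that $\gamma$ does not itself charge degenerate distribution functions in a way that would smuggle mass back onto $A^\perp$ — this is handled by the hypothesis $\gamma(0_{D^\infty(\RR)}) = 0$ from Theorem~\ref{theoremexponentmeasureisiidmixture} together with the equivalence between $G = 1 - \exp(-x(\cdot))$ being nondegenerate on $(-\infty,\infty]$ and $x$ not being identically $+\infty$ on all of $\RR$, and one must check the boundary case where $G$ puts mass $p \in (0,1)$ at $\infty$ is genuinely covered by the limit computation above. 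A secondary (mostly cosmetic) point is to make explicit, via Theorem~\ref{theoremchroncorrespondstominiddist}, that the $(b,\tilde H)$ appearing here really is the unique drift/driftless splitting of the chronometer attached to $\bmX$, so that the statement's hypotheses and conclusion refer to the same objects.
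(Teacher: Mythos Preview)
Your proposal is correct and follows essentially the same route as the paper: the identities $\mu^{\tilde H}=\mu_\gamma$ and $\mu=\mu_b+\mu_\gamma$ are read off directly from Theorem~\ref{theoremexponentmeasureisiidmixture} (the paper does not even spell this out), and the singularity is obtained by showing $\mu_\gamma(A^\perp)=0$ via the mixture representation together with $\pp_G(\{\infty\})<1$ for $G\neq 0_{D^\infty(\RR)}$. The only cosmetic difference is that the paper computes $\otimes_{i\in\N}\pp_G(A^\perp)$ directly as $\sum_{i\in\N}\pp_G\big((-\infty,\infty)\big)\prod_{j\in\N}\pp_G(\{\infty\})=0$, whereas you bound it by $p^n+np^{n-1}(1-p)\to 0$; both are the same elementary observation.
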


\begin{proof}
Note that $\pp_G\big( (-\infty,\infty)\big) \prod_{i\in\N} \pp_G (\{\infty\})=0$ for all $G\in\overline{M}_\infty^0$, since $\lim_{t\to\infty}G(t)>0$ for all $0_{D^\infty(\RR)}\not=G\in\overline{M}_\infty^0$. Thus,
\begin{align*}
    \mu_{\gamma}\lc A^{\perp}\rc& = \bigintssss_{\overline{M}^0_\infty}\sum_{i\in\N} \lc \otimes_{k\in\N}\pp_G\rc \big( \{x_i< \infty \text{ and }x_j=\infty \text{ for all } j\not=i\} \big) \gamma(\mathrm{d}G)\\
    &=\bigintssss_{\overline{M}^0_\infty}\sum_{i\in\N}\pp_G\big( (-\infty,\infty)\big) \bigg(\prod_{j\in\N} \pp_G (\{\infty\}) \bigg) \gamma(\mathrm{d}G)  =0
\end{align*}
Thus, $\mu_b$ and $\mu_\gamma$ are singular.
\end{proof}

\begin{rem}[Non-separability of dependence and independence for finite dimensional margins]
It is educational to observe that $\mu_{b,d}$ and $\mu_{\gamma,d}$ are singular for a fixed $d\in\N$ if and only if $\gamma$ is concentrated on the set $\{ G \in \overline{M}_\infty^0 \mid \lim_{t\to\infty}G(t)=1\}$, since 
\begin{align*}
    &\mu_{\gamma,d}\lc \cup_{1\leq i\leq d} \{x_i<\infty\text{ and } x_j=\infty \text{ for all } j\not=i\}\rc\\
    &=\sum_{1\leq i\leq d} \int_{\overline{M}^0_\infty}\pp_G\big( (-\infty,\infty)\big) \big( \pp_G (\{\infty\})\big)^{d-1} \gamma(\mathrm{d}G)  \\
    &=d \int_{\overline{M}_\infty^0} \lc \lim_{t\to\infty}G(t) \rc \lc 1-\lim_{t\to\infty}G(t)\rc^{d-1} \gamma(\mathrm{d} G) .
\end{align*}
Therefore, $\mu_{b,d}$ and $\mu_{\gamma,d}$ usually do not separate dependence from independence, meaning that $\bmX^{(2)}_d\sim \min\big\{ \bmX^{(2,\perp)}_d,\bmX^{(2,\not\perp)}_d\big\}\sim \exp\lc -\mu_{\gamma,d}\lc (\cdot,\binfty]^\complement \rc \rc$ where $\bmX^{(2,\perp)}_d$ has components which are $d$ i.i.d.\ copies of a non-trivial random variable and $\bmX^{(2,\not\perp)}_d$ denotes a non-trivial min-id random vector. Intuitively, this may be interpreted as follows: the finite dimensional exponent measure $\mu_{\gamma,d}$ smears around independence and this effect can only be distinguished from independence in the limit. 
\end{rem}

\begin{rem}[Dependence structure of exchangeable min-id sequences]
\label{remtaildependence}
\cite[Lemma 4.4]{MaiSchererexMSMVE} shows that exchangeable min-stable sequences admit positive $\rho^u_2$ if and only if the exchangeable min-stable sequence is given by the comonotonic sequence $\bmX=(\bar{X},\bar{X},\ldots)$, where $\bar{X}\in\bR$ is some univariate random variable. Therefore, exchangeable min-stable sequences satisfy $\rho^u_2\in\{0,1\}$ and $\rho^u_2=1$ implies $\bmX=(\bar{X},\bar{X},\ldots)$, which raises the question whether the same result holds for exchangeable min-id sequences.

The question can be answered by the following example: Define an exchangeable min-id sequence via the extended chronometer $(H_t)_{t\in\RR}=-\log\lc 1-\Gamma_t\rc$, where $(\Gamma_t)_{t\in\RR}$ denotes a stochastically continuous Dirichlet process, see \cite{ferguson1973}. The Dirichlet process naturally appears in Bayesian statistics when the distribution function $(\Gamma_t)_{t\in\RR}$ of an i.i.d.\ sequence is viewed as the random quantity of interest. The distribution of the Dirichlet process is then specified as the prior distribution on the space of distribution functions, which arises under the assumption that $\lc \int_{A_1}\mathrm{d} \Gamma,\ldots,\int_{A_d}\mathrm{d}\Gamma\rc $ follows a Dirichlet distribution for every $d\in\N$ and every measurable disjoint partition $\lc A_i\rc_{1\leq i \leq d}$ of $\RR$. The Dirichlet process is particularly convenient in Bayesian statistics, since its posterior distribution is again a (non-stochastically continuous) Dirichlet process.

The authors of \cite{ferguson1974,doksum1974} have shown that $H$ is infinitely divisible, which implies that the associated exchangeable sequence $\bmX$ is min-id. Moreover, \cite{mai2015dirichlet} show that the upper and lower bivariate tail dependence coefficients $\rho^u_2$ and $\rho^l_2$ of $\bmX$ can take any value in $(0,1)$. Thus, exchangeable min-id sequences can exhibit arbitrary positive bivariate upper and lower tail dependence, which shows that the dependence structure of exchangeable min-id sequences is much richer than the dependence structure of exchangeable min-stable sequences. 

To verify that $\rho_2^u=1$ does not imply $\bmX=(\bar{X},\bar{X},\ldots)$ when $\bmX$ is an exchangeable min-id sequence it is easy to see that every extended chronometer with finite L\'evy measure concentrated on the set $\{ x\in M_\infty^0\mid x(t)<\infty,\ \lim_{t\to\infty} x(t)=\infty \}$ yields an exchangeable min-id sequence $\bmX$ which satisfies $\rho^u_2=1$ but not $\bmX=(\bar{X},\bar{X},\ldots)$.
\end{rem}

\subsection{Characterization of general exchangeable min-id sequences}
\label{sectiongenextmin-id}
Even though we have mentioned that studying min-id distributions satisfying Condition $(\Diamond)$ is not a loss of generality, we feel the need to translate the results of Theorems~\ref{theoremchroncorrespondstominiddist} and~\ref{theoremexponentmeasureisiidmixture} to arbitrary exchangeable min-id sequences on $[-\infty,\infty]^\N$. The appearance of the following corollary is slightly more technical than that of Theorems~\ref{theoremchroncorrespondstominiddist} and~\ref{theoremexponentmeasureisiidmixture}, which is due to the subtleties in the definition of a survival function of random vectors in $[-\infty,\infty]^d$ and explains why we preferred to develop the preliminary results under Conditions $(\Diamond)$ and $(\Diamond^\prime)$.

\begin{cor}[Characterization of general exchangeable min-id sequences]
\label{corollarymainresult}
Assume that $\pp(\bmX=-\binfty)<1$. Then, the following are equivalent:

\begin{enumerate}
\item $\bmX\in\bR^\N$ is an exchangeable min-id sequence.

\item There exists $\bell=(\ell,\ell,\ldots )\in(-\infty,\infty]^\N$ such that $$(X_i)_{i\in\N} \sim \big( \inf\{t\in(-\infty,\ell) \mid H_t\geq E_i\}\big)_{i\in\N},$$ where $\inf \emptyset=: \ell$, $(E_i)_{i\in\N}$ is a sequence of i.i.d.\ unit exponential random variables independent of a unique extended chronometer $(H_t)_{t\in(-\infty,\ell)}\in D^\infty((-\infty,\ell))$ satisfying
\begin{enumerate}
\item $\pp(H_t=0)=1$ for all $t< \sup \big\{ x\in\RR\ \vert \ \pp(X_1\leq x)=0\big\}=:w$,
\item The L\'evy measure $\nu$ of $H$ is supported on
$$M_\ell:=\{x\in D^\infty((-\infty,\ell))\mid x\text{ is nnnd and }x(t)=0 \text{ for all } t<w\},$$
\item $H$ has real-valued drift $b\in M_\ell\cap D((-\infty,\ell))$.
\end{enumerate}

\item There exists $\bell=(\ell,\ell,\ldots )\in(-\infty,\infty]^\N$ and an exchangeable Radon measure $\mu$ on $E^\N_\bell$ such that
\begin{align*}
&\pp\lc (X_{i_1},\ldots X_{i_d})\in \times_{i=1}^d \{x_i,\ell]\rc\\
&=\begin{cases} \exp\Big( -\mu\big( \big\{\bm y \in E_\bell^\N \mid  (y_{i_1},\ldots,y_{i_d})\in \lc \times_{i=1}^d \{x_i,\infty]\rc^\complement \big\}\big)\Big) & \bmx<\bell\\
0 & otherwise
\end{cases},
\end{align*}
where $\{x_i,\infty]$ is interpreted as $(x_i,\infty]$ if $x_i>-\infty$ and $\{-\infty,\infty]$ is interpreted as $[-\infty,\infty]$. Moreover, for all $A\in \bc\lc E^\N_\bell\rc$, we have  
$$\mu\big\{\bm y \in E^\N_\bell\mid  (y_{i_1},\ldots,y_{i_d})\in A \big\}= \mu_{b}\lc A\rc  +\bigintssss_{\overline{M}_\ell} \otimes_{i\in\N}\pp_G\lc A\rc \gamma(\mathrm{d}G), $$
where $\gamma$ is a measure supported on 
$$\overline{M}_\ell:=\big\{G:(-\infty,\ell)\to [0,1]\mid G\text{ distr.\ fct.\ of random variable on } [w,\ell]\big\}$$
satisfying
$$\gamma(0_{D^\infty(\RR)})=0 \text{ and } \int_{\overline{M}_\ell} G(t)\gamma(\mathrm{d}G) <\infty \text{ for all } t\in(-\infty,\ell).$$
\end{enumerate} 
The relation of $\gamma$ and $\nu$ is given by
 $$\gamma(A)=\nu\lc\big\{x\in M_\ell\mid \big(1-\exp(-x(\cdot))\big)\in A\big\}\rc \text{ for every } A\in\bc \lc \overline{M}_\ell\rc$$
\end{cor}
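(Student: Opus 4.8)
The plan is to conjugate $\bmX$ by a single deterministic, coordinate-wise monotone transformation so that Theorems~\ref{theoremchroncorrespondstominiddist} and~\ref{theoremexponentmeasureisiidmixture} (which only cover the case $(\Diamond)$) become applicable, and then to transport every conclusion back along that transformation. First I would identify the endpoints of $\bmX$: by exchangeability all univariate margins coincide, so I set $\ell:=\sup\{x\in\RR\mid \pp(X_1\leq x)<1\}$ and $w:=\sup\{x\in\RR\mid \pp(X_1\leq x)=0\}$. The hypothesis $\pp(\bmX=-\binfty)<1$ forces $\ell\in(-\infty,\infty]$, it forces $\pp(X_1>t)=0$ for $t\geq\ell$, and $w=-\infty$ whenever $\pp(X_1=-\infty)>0$. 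I would then fix a continuous strictly increasing bijection $\phi:(-\infty,\ell)\to\RR$ and extend it to a non-decreasing left-continuous $\phi:\bR\to\bR$ by $\phi(t):=\infty$ for $t\geq\ell$ and $\phi(-\infty):=-\infty$ if $\pp(X_1=-\infty)=0$, resp.\ $\phi(-\infty):=0$ otherwise (relocating the atom at $-\infty$ to a finite point). By Lemma~\ref{lemmatrafomaxtominid}, $\tilde{\bmX}:=(\phi(X_i))_{i\in\N}$ is an exchangeable min-id sequence; by construction it is $(-\infty,\infty]^\N$-valued with upper endpoint $\binfty$, hence satisfies Condition $(\Diamond)$, and $\bmX=(\psi(\tilde X_i))_{i\in\N}$ a.s., where $\psi:\bR\to\bR$ is the non-decreasing left-continuous map that inverts $\phi$ on $\RR$ and sends $0$ and $\infty$ to $-\infty$ and $\ell$, respectively.

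Next I would feed $\tilde{\bmX}$ into the two main theorems and pull back. Theorem~\ref{theoremchroncorrespondstominiddist} gives the unique extended chronometer $\tilde H\in D^\infty(\RR)$ satisfying $(\Diamond^\prime)$ with $\tilde{\bmX}\sim(\inf\{t\in\RR\mid\tilde H_t\geq E_i\})_{i\in\N}$, and Theorem~\ref{theoremexponentmeasureisiidmixture} gives $\tilde\mu=\mu_{\tilde b,\infty}+\mu_{\tilde\gamma,\infty}$ with L\'evy measure $\tilde\nu$ carried by $M^0_\infty$, drift $\tilde b\in M^0_\infty\cap D(\RR)$, and $\tilde\nu(A)=\tilde\gamma(\{G=1-\exp(-x(\cdot))\mid x\in A\})$. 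Let $H$ be the restriction to $(-\infty,\ell)$ of the unique de Finetti process $H^{\bmX}$ of $\bmX$. Comparing finite-dimensional survival functions and using that $\phi$ is strictly increasing on $(-\infty,\ell)$ yields $\{X_i>t\}=\{\tilde X_i>\phi(t)\}$ up to null sets for $t<\ell$, hence $H\sim(\tilde H_{\phi(t)})_{t\in(-\infty,\ell)}$; since $\phi$ is continuous and strictly increasing there, this already shows that $H$ is nnnd, c\`adl\`ag and infinitely divisible on $(-\infty,\ell)$ (compose the i.i.d.\ c\`adl\`ag summands of $\tilde H$ furnished by Corollary~\ref{corcadlagsums} with $\phi$), and that $\bmX\sim(\inf\{t\in(-\infty,\ell)\mid H_t\geq E_i\})_{i\in\N}$ with $\inf\emptyset=\ell$. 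As $x\mapsto x\circ\phi$ is measurable for the finite-dimensional $\sigma$-algebras, the substitution $y=x\circ\phi$ in the Laplace transform of Proposition~\ref{propositionltchronometer} identifies the drift of $H$ as $b:=\tilde b\circ\phi$ and its L\'evy measure as the image $\nu$ of $\tilde\nu$ under $x\mapsto x\circ\phi$; pushing $\tilde\mu$ forward by the coordinate-wise $\psi$ turns $\mu_{\tilde b,\infty}$ into $\mu_{b}$ and $\mu_{\tilde\gamma,\infty}$ into $\int_{\overline{M}_\ell}\otimes_{i\in\N}\pp_G\,\gamma(\mathrm{d}G)$, with $\gamma$ the image of $\tilde\gamma$ under $\tilde G\mapsto\tilde G\circ\phi$ (hence carried by distribution functions of random variables on $[w,\ell]$), and since the time change commutes with $x\mapsto 1-\exp(-x(\cdot))$ one obtains $\gamma(A)=\nu(\{x\in M_\ell\mid 1-\exp(-x(\cdot))\in A\})$.

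It then remains to check conditions (a)--(c). For $t<w$, $\pp(X_1>t)=1$ gives $\ee[e^{-H_t}]=1$, hence $H_t=0$ a.s., which is (a); consequently the infinitely divisible random variable $H_{w-}=\sup_{t<w}H_t$ vanishes a.s., so, exactly as in the proof of Proposition~\ref{propositionltchronometer}, its one-dimensional L\'evy measure is trivial, which for the nnnd paths carrying $\nu$ forces $\nu(\{x\mid x(t)>0\text{ for some }t<w\})=0$ and hence $\nu(M_\ell)=1$, which is (b); and $b=\tilde b\circ\phi\in M_\ell\cap D((-\infty,\ell))$ since $\tilde b\in M^0_\infty\cap D(\RR)$ and, again by (a) and the same Laplace argument, $b$ vanishes on $(-\infty,w)$, which is (c). Finally, the equivalences $1\Leftrightarrow 2\Leftrightarrow 3$ transfer verbatim from Theorems~\ref{theoremchroncorrespondstominiddist} and~\ref{theoremexponentmeasureisiidmixture} through the bijection $\bmX\leftrightarrow\tilde{\bmX}$, the implications $2\Rightarrow 1$ and $3\Rightarrow 1$ being obtained by running the same transformation in reverse (extending the given $H$ to an extended chronometer on $\RR$ via $\tilde H_t:=H_{\phi^{-1}(t)}$ on the range of $\phi$ and constantly outside it, then invoking Theorem~\ref{theoremchroncorrespondstominiddist} and Lemma~\ref{lemmatrafomaxtominid}); uniqueness of $H$ and of $\mu$ is inherited, $H$ being already pinned down directly by de Finetti's Theorem.

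I expect the main obstacle to be organizational rather than conceptual: the single map $\phi$ has to be pushed simultaneously through the multivariate survival function (with its delicate boundary conventions for the symbol $\{x_i,\ell]$), the c\`adl\`ag paths of $H$ and of its i.i.d.\ summands, the path-space L\'evy measure on $D^\infty$, and the exponent measure on $E^\N_\bell$, while one continually verifies measurability of the induced maps and that images of L\'evy (resp.\ exponent) measures are again of that type, and while one handles the two regimes $\pp(X_1=-\infty)=0$ versus $\pp(X_1=-\infty)>0$ (in which the atom at $-\infty$ must be relocated to a finite point) without branching every step into cases. This is exactly the bookkeeping that Conditions $(\Diamond)$ and $(\Diamond^\prime)$ were introduced to defer, which is why it reappears here as the principal, if routine, difficulty.
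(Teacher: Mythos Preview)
Your approach is essentially the paper's own: the paper's proof simply reads ``The proof of Proposition~\ref{propositionltchronometer} shows that the L\'evy measure of a nnnd c\`adl\`ag id-process is concentrated on nnnd c\`adl\`ag functions. The rest of the proof is a combination of Theorem~\ref{theoremchroncorrespondstominiddist}, Theorem~\ref{theoremexponentmeasureisiidmixture} and Lemma~\ref{lemmatrafomaxtominid},'' and what you have written is precisely an explicit execution of that combination via a monotone time change. One small slip to fix: in the regime $\pp(X_1=-\infty)>0$ you take $\phi:(-\infty,\ell)\to\RR$ a bijection and simultaneously set $\phi(-\infty):=0$, which collides with the preimage $\phi^{-1}(0)\in(-\infty,\ell)$ and breaks the inverse $\psi$; simply take the range of $\phi$ to avoid the relocation point (e.g.\ $\phi:(-\infty,\ell)\to(0,\infty)$ with $\phi(-\infty):=0$) and everything goes through.
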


\begin{proof}
The proof of Proposition \ref{propositionltchronometer} shows that the L\'evy measure of a nnnd c\`adl\`ag id-process is concentrated on nnnd c\`adl\`ag functions. The rest of the proof is a combination of Theorem~\ref{theoremchroncorrespondstominiddist}, Theorem~\ref{theoremexponentmeasureisiidmixture} and Lemma~\ref{lemmatrafomaxtominid}.
\end{proof}




A version of 
Corollary \ref{corollarymainresult} for exchangeable max-id sequences can be easily deduced from Lemma~\ref{lemmatrafomaxtominid} and Example \ref{examplemintomaxid}.

\section{Established families unified under the present umbrella}
\label{sectionexamples}
In this section we present several important examples of exchangeable min-id sequences. Moreover, we investigate the dependence structure of exchangeable min-id sequences and characterize extendible min-id random vectors with finite exponent measure.

\subsection{Independence and comonotonicity}
Corollary~\ref{corollarymainresult} shows that the random sequence $\bmX\in[-\infty,\infty]^\N$ with i.i.d.\ components distributed according to the survival function $\exp(-b(\cdot))$ corresponds to the deterministic process $H_t=b(t)$. The exponent measure of $\bmX$ is given by $\mu=\mu_{b}$.

Consider the comonotonic case $\bmX=(\bar{X},\bar{X},\ldots)$, where the random variable $\bar{X}\in\bR$ satisfies $\pp\lc \bar{X}=-\infty\rc<1$. The corresponding driftless extended chronometer $H$ is given by $\lc H_t\rc _{t\in(-\infty,\ell)}=\lc \infty \id_{\{\bar{X}\leq t\}}\rc _{t\in(-\infty,\ell)}$. The L\'evy measure $\nu$ of $H$ is supported on  
$$\{x\in D^\infty((-\infty,\ell))\mid x(\cdot)=\infty \id_{\{ \cdot \geq a\}}\text{ for some  }  a\in(-\infty,\ell)\}.$$ 
Moreover, for $t<\ell$, 
$$\nu\lc \{\infty  \id_{\{ \cdot \geq a\}}\mid  a\in (-\infty,t]\}\rc=-\log\lc \pp(\bar{X}>t) \rc.$$
Therefore, 
\begin{align*}
\mu\lc (\bm t,\binfty]^\complement\rc&=\bigintssss_{M_\ell } \lc 1-\exp\lc -\sum_{i\in\N} x(t_i)\rc\rc \nu(\mathrm{d}x)=-\log\lc \pp\big(\bar{X}>\max_{i\in\N} t_i\big) \rc\\
&=-\log\big( \pp(\bmX>\bm t) \big).
\end{align*}

\subsection{Exogenous shock models / additive processes}
\label{subsectiongenmarsholkin}
\cite{Sloot2020b,MaiSchenkSchererexchangeableexshock} prove that extendible exogenous shock models constitute a proper subclass of extendible min-id distributions. Fix $d\in\N$ and define a family of independent random variables $\lc\tau_{I}\rc_{I\subset \{1,\ldots,d\}}\in [0,\infty)^{2^d}$. Moreover, let the distribution of $\tau_{I}$ be continuous and solely dependent on $\vert I\vert$, i.e.\ the cardinality of the subset $I$ of $\{1,\ldots,d\}$. Then the $d$-dimensional random vector 
\begin{align}
(X_i)_{1\leq i\leq d}\sim \lc \min_{i\in I\subset \{1,\ldots,d\}}\tau_{I}\rc_{1\leq i\leq d} \label{eqngenmarsholkin}
\end{align}
is exchangeable and interpreted as an exogenous shock model. The random variable $\tau_I$ models the arrival time of an exogenous shock destroying all components $I$ and $X_i$ equals the first time point at which component $i$ is affected by a shock. Exchangeability boils down to our assumption that the shock arrival time distributions only depend on the number of components affected by the respective shocks. Thus, the model is parametrized by $d$ distribution functions, since there are $d$ different ``shock sizes''. If we let $d \rightarrow \infty$ in this construction, Kolmogorov's extension theorem guarantees the existence of an exchangeable sequence $\bmX\in(0,\infty)^\N$ with the just described $d$-dimensional marginal distributions. 
\cite{Sloot2020b,MaiSchenkSchererexchangeableexshock} prove that the associated extended chronometer is a stochastically continuous c\`adl\`ag processes with independent increments. Such processes are known as (possibly killed) additive subordinators, see \cite{satolevyinfinitely,bertoinsubordinators} for a detailed treatment. The corresponding L\'evy measure $\nu$ of $H$ is supported on the class of one-step functions $\{u\id_{\{s\leq\cdot\}}\mid s\in(0,\infty) ,u\in(0,\infty]\}$ \cite{rosinskiinfdivproc}. Assuming that the extended chronometer is driftless, this implies that the associated exponent measure $\mu$ is an infinite mixture of (probability) distributions in the set 
$$\big\{ \otimes_{i\in\N} \lc (1-\exp(-u))\delta_s+\exp(-u)\delta_\infty\rc \mid u\in(0,\infty],s\in(0,\infty) \big\},$$ 
where $\delta_s$ denotes the Dirac measure at $s$.

An important subclass of exogenous shock models is the class of Marshall--Olkin distributions. It is obtained by restricting the distribution of $\tau_I$ in (\ref{eqngenmarsholkin}) to exponential distributions. Furthermore, \cite{janphd} shows that the class of extended chronometers corresponding to extendible Marshall--Olkin distributions is precisely the class of killed L\'evy subordinators. Killed L\'evy subordinators $(H_t)_{t\in\RR}$ are precisely the class of extended chronometers with stationary and independent increments, which satisfy $H_0=0$ and have an independent exponential killing rate, see \cite{bertoinsubordinators} for more details. Example 2.23 in \cite{rosinskiinfdivproc} shows that the L\'evy measure of a killed L\'evy subordinator $H$ is the image measure of the map
\begin{align}
    f: \lc [0,\infty)\times(0,\infty];\bc\lc(0,\infty)\times(0,\infty] \rc;\lambda\otimes \upsilon\rc\to M^0_\infty,\ (s,u)\mapsto u\id_{\{s\leq \cdot\}} 
\end{align}
on $M^0_\infty$, where $\lambda$ denotes the Lebesgue measure and $\upsilon$ denotes the L\'evy measure of the infinitely divisible random variable $H_1$. The drift of $H$ is given by $b(t)=b(1) t$, where $b(1)\geq 0$ denotes the drift of $H_1$.

In comparison to \cite{Sloot2020b,MaiSchenkSchererexchangeableexshock}, our framework allows for some additional flexibility, since we neither assume that $H$ is indexed by $[0,\infty)$, nor stochastically continuous, nor that $\lim_{t\to\infty} H_t=\infty$. More precisely, we allow for non-continuously distributed failure times on $[-\infty,\infty]$ instead of continuously distributed failure times on $(0,\infty)$. An important observation in this regard is that the behavior of the stochastic model (\ref{eqngenmarsholkin}) under monotone, componentwise transformations of the $X_i$ is not necessarily well-behaved in the framework of \cite{Sloot2020b,MaiSchenkSchererexchangeableexshock}. For instance, if $H$ is a L\'evy subordinator, a componentwise transformation of the $X_i$ corresponds to a change from $H_t$ to $H_{f(t)}$, which is no longer a L\'evy subordinator unless $f$ is linear with $f(0)=0$. In contrast, Corollary~\ref{cormarginaltrafoistimechange} tells us that under the more general umbrella of id-processes such marginal transformations are well-behaved.

As a final note of caution we remark that non-decreasing c\`adl\`ag processes with independent increments are not necessarily infinitely divisible. More precisely, our framework includes all c\`adl\`ag processes with independent id increments, meaning that if $\lc \lim_{s\nearrow t}(H_t-H_s)\rc_{t\in\RR}$ is a collection of independent and id random variables then $H$ is id. Thus, our framework does not incorporate all non-decreasing c\`adl\`ag processes with independent increments, but only extended chronometers with independent increments.

\subsection{A common framework for extreme-value copulas and (reciprocal) Archimedean copulas}
\label{subseccomumbrellareciparchimminstable}
We consider a pair $(\kappa,\rho)$ of a Radon measure $\kappa$ on $[0,\infty)$ and a probability measure $\rho$ on $\{G \mid G\text{ is a distribution function of a random variable on }[0,\infty]\}\subset \overline{M}^0_\infty$ with $\rho\lc 0_{D^\infty(\RR)}\rc=0$. Define the measure $\gamma_{\kappa,\rho}$ on $\overline{M}^0_\infty$ via
\begin{gather*}
\gamma_{\kappa,\rho}(A):=\int_0^{\infty} \rho\big( \{G\,:\,G(s\cdot) \in A\}\big) \kappa(\mathrm{d}s).
\end{gather*}
Moreover, assume that $\gamma_{\kappa,\rho}$ satisfies 
\begin{align*}
\int_{\overline{M}^{0}_{\infty}}G(t)\,\gamma_{\kappa,\rho}(\mathrm{d}G) = \int_{\overline{M}^{0}_{\infty}}\int_0^{\infty}G\lc \frac{t}{s}\rc \kappa(\mathrm{d}s)\rho(\mathrm{d}G)<\infty \text{ for all } t\geq 0. 
\end{align*}
Theorem~\ref{theoremexponentmeasureisiidmixture} implies that 
\begin{gather}
\mu_{\kappa,\rho}:=\int \otimes_{i\in\N} \pp_G \,\gamma_{\kappa,\rho}(\mathrm{d}G)
\label{expmeasurekappa}
\end{gather}
defines a valid global exponent measure on $E^\N_\binfty$. The following proposition provides a series representation of the associated non-decreasing id-process.

\begin{prop}[$\bmX$ and $H$ associated with $\gamma_{\kappa,\rho}$]\label{propstrongidt}
A series representation of the id-process $H$ associated with the exponent measure $\mu_{\kappa,\rho}$ in (\ref{expmeasurekappa}) is given by
\begin{gather*}
(H_t)_{t\geq 0} = \lc \sum_{k \geq 1} -\log\lc 1-G_k\Big( \frac{t}{S_k}\Big)\rc \rc_{t\geq 0},
\end{gather*}
where $N:=\sum_{k \geq 1}\delta_{(S_k,G_k)}$ denotes a Poisson random measure on $[0,\infty) \times \overline{M}^{0}_{\infty}$ with mean measure $\kappa \otimes \rho$. The survival function of the associated exchangeable min-id sequence $\bmX$ is given by
\begin{gather}
\pp(\bmX>\bmt) = \exp\lc- \ee \lk \kappa\lc\lk 0,\max_{i\in\N}\frac{t_i}{Z_i}\rk\rc \rk \rc, \label{eqnstabtaildep1}
\end{gather}
where $\bm{Z}=(Z_1,Z_2,\ldots)$ denotes an exchangeable sequence of random variables with associated random distribution function $G \sim \rho$.
\end{prop}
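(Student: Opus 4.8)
The plan is to obtain both assertions from Theorems~\ref{theoremchroncorrespondstominiddist} and~\ref{theoremexponentmeasureisiidmixture} by unwinding the two pushforwards built into $\gamma_{\kappa,\rho}$, and then invoking the L\'evy--It\^o/LePage series representation of a driftless extended chronometer. For the \emph{L\'evy measure}, observe first that $\mu_{\kappa,\rho}$ in~(\ref{expmeasurekappa}) carries no ``$\mu_b$''-part, so Theorem~\ref{theoremexponentmeasureisiidmixture} applies with $b\equiv 0$: the extended chronometer $H$ associated with the exchangeable min-id sequence $\bmX$ having exponent measure $\mu_{\kappa,\rho}$ is driftless and has L\'evy measure $\nu(A)=\gamma_{\kappa,\rho}\big(\{G\in\overline{M}^0_\infty\mid -\log(1-G(\cdot))\in A\}\big)$. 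Unwinding the definition of $\gamma_{\kappa,\rho}$ (equivalently, using the displayed identity right after it, which records that $\gamma_{\kappa,\rho}$ is the image of $\kappa\otimes\rho$ under $(s,G)\mapsto G(\cdot/s)$) then shows that $\nu$ is the image of $\kappa\otimes\rho$ under the single map $\Psi(s,G):=\big(t\mapsto -\log(1-G(t/s))\big)$. I would check here that $\Psi$ is measurable and that $\Psi(s,G)\in M^0_\infty$ for $s>0$ and $G$ a distribution function of a $[0,\infty]$-valued random variable: $t\mapsto G(t/s)$ is nnnd, c\`adl\`ag and vanishes on $(-\infty,0)$, while $u\mapsto -\log(1-u)$ is non-decreasing and continuous on $[0,1)$ with the convention $-\log 0=\infty$; together with $-\log(1-u)\le 2\min\{u,1\}$ this also shows, from the assumed finiteness of $\int G(t)\,\gamma_{\kappa,\rho}(\mathrm{d}G)$, that $\nu$ satisfies $\int \min\{x(t),1\}\,\nu(\mathrm{d}x)<\infty$, i.e.\ is a genuine L\'evy measure.

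For the \emph{series representation}, let $N=\sum_{k\ge 1}\delta_{(S_k,G_k)}$ be a PRM on $[0,\infty)\times\overline{M}^0_\infty$ with intensity $\kappa\otimes\rho$. By the mapping theorem for Poisson random measures, $\sum_{k\ge 1}\delta_{\Psi(S_k,G_k)}$ is a PRM on $M^0_\infty$ with intensity $\nu$, and the driftless extended chronometer with L\'evy measure $\nu$ is, in law, the integral of the identity against such a PRM, i.e.\
\begin{align*}
\tilde H:=\sum_{k\ge 1}\Psi(S_k,G_k)=\Big(\sum_{k\ge 1}-\log\big(1-G_k(t/S_k)\big)\Big)_{t\ge 0}.
\end{align*}
The partial sums are non-negative and non-decreasing both in $t$ and in the number of summands, hence converge pathwise in $[0,\infty]$ by monotone convergence; the limit is a.s.\ finite away from genuine jumps to $\infty$ precisely because $\int\min\{x(t),1\}\,\nu(\mathrm{d}x)<\infty$; and the limit is c\`adl\`ag by the pathwise machinery of Section~\ref{subsectionextendedchron} and \cite{rosinskiinfdivproc} (cf.\ Example~\ref{examplefinitelevymeasure} for the finite-measure case). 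Computing the Laplace transform of $\tilde H$ over the PRM reproduces~(\ref{laplaceexponentidprocess}) with drift $0$ and L\'evy measure $\nu$, so by the uniqueness part of Proposition~\ref{propositionltchronometer} we get $\tilde H\sim H$, which is the first assertion. I expect this convergence-and-c\`adl\`ag verification (and the interchange of sum and mapping for an infinite L\'evy measure) to be the main technical obstacle.

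For the \emph{survival function}, I would start from $\pp(\bmX>\bmt)=\exp\big(-\mu_{\kappa,\rho}\big((\bmt,\binfty]^\complement\big)\big)$ (Propositions~\ref{propositionexponentmeasureofminid} and~\ref{propositionextendibleexponentmeasure}), use $\otimes_{i\in\N}\pp_G\big((\bmt,\binfty]^\complement\big)=1-\prod_{i\in\N}(1-G(t_i))$, and unwind $\gamma_{\kappa,\rho}$ once more to get
\begin{align*}
\mu_{\kappa,\rho}\big((\bmt,\binfty]^\complement\big)=\int_0^\infty\int_{\overline{M}^0_\infty}\Big(1-\prod_{i\in\N}\big(1-G(t_i/s)\big)\Big)\,\rho(\mathrm{d}G)\,\kappa(\mathrm{d}s).
\end{align*}
Since, by the definition of $\bm{Z}$ and de Finetti's theorem, $\int_{\overline{M}^0_\infty}\prod_{i\in\N}(1-G(t_i/s))\,\rho(\mathrm{d}G)=\pp\big(\bigcap_{i\in\N}\{Z_i>t_i/s\}\big)=\pp\big(\max_{i\in\N}t_i/Z_i<s\big)$, the inner double integral equals $\int_0^\infty\pp\big(\max_{i\in\N}t_i/Z_i\ge s\big)\,\kappa(\mathrm{d}s)$, and Tonelli together with $\int_0^\infty\id_{\{s\le M\}}\,\kappa(\mathrm{d}s)=\kappa([0,M])$ yields $\mu_{\kappa,\rho}\big((\bmt,\binfty]^\complement\big)=\ee\big[\kappa\big([0,\max_{i\in\N}t_i/Z_i]\big)\big]$, i.e.\ (\ref{eqnstabtaildep1}). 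Besides the obstacle above, only routine bookkeeping remains, essentially the boundary conventions ($Z_i\in\{0,\infty\}$, $t_i=0$, $\kappa$-atoms at $\max_{i\in\N}t_i/Z_i$, and $\sup$ versus $\max$ over the infinite index set), which do not affect the displayed formulas.
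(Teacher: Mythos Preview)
Your proposal is correct and follows essentially the same route as the paper: both arguments identify the Laplace transform of the series via the exponential formula for integrals against a Poisson random measure (the paper cites \cite[Proposition 3.6]{resnickextreme}), and both obtain the survival function by expanding $1-\prod_{i}(1-G(t_i/s))$ and swapping the $\kappa$- and $\rho$-integrals via Tonelli. The only organizational difference is that the paper works ``forward'' (define the series, compute its Laplace transform, read off that it is the id-process with exponent measure $\mu_{\kappa,\rho}$), whereas you work ``backward'' (start from $\mu_{\kappa,\rho}$, identify $\nu$ as the pushforward of $\kappa\otimes\rho$ under $\Psi$, then invoke the mapping theorem and L\'evy--It\^o); this lets the paper sidestep the c\`adl\`ag/convergence bookkeeping you flag as an obstacle, since the Laplace transform computation needs only nonnegativity.
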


\begin{proof}
The proof can be found in Appendix \ref{appendixproofs}.
\end{proof}
We find it educational to remark that the sequences $(S_k)_{k\in\N}$ and $(G_k)_{k\in\N}$ are independent and that $(G_k)_{k\in\N}$ is i.i.d.\ drawn from $\rho$ and $S_k \sim f^{\leftharpoonup}(\epsilon_1+\ldots+\epsilon_k)$, where $(\epsilon_k)$ are i.i.d.\ unit exponential and $f^{\leftharpoonup}(x):=\inf \{t\geq 0 \mid f(t)\geq x\}$ denotes the generalized inverse of the function $f(t):=\kappa([0,t])$.\\
Three prominent examples for the choice of the pair $(\kappa,\rho)$ can be found in the literature. 
\subsubsection{Exchangeable min-stable sequences}
Choosing $\kappa(\mathrm{d}s)=\mathrm{d}s$ yields 
\begin{gather*}
\gamma_{\kappa,\rho}(A)=\int_0^{\infty} \rho(\{G\,:\,G(s\cdot) \in A\}) \mathrm{d}s.
\end{gather*}
\cite[Theorem 4.2]{koppmolchanov} shows that this defines the exponent measure of a driftless strong-idt process. This means that $H$ satisfies $H_0=0$ and
\begin{align*}
    \big( H_t\big)_{t\geq 0}\sim \lc \sumn H^{(i)}_{\frac{t}{n}}\rc_{t\geq 0} \text{ for all }n\in\N ,
\end{align*}
where $\lc H^{(i)}\rc_{i\in\N}$ are i.i.d.\ copies of $H$. The associated L\'evy measure $\nu$ is given by 
\begin{gather*}
\nu(A)=\gamma_{\kappa,\rho}\lc \big\{G\mid G= 1-\exp\lc -x(\cdot)\rc \text{ for some }x\in A\big\}\rc. 
\end{gather*}
Equation $(\ref{eqnstabtaildep1})$ simplifies to
\begin{align}
\pp(\bmX>\bm t)=   \exp\lc- \ee \Big[ \max_{i \geq 1}\frac{t_i}{Z_i}\Big] \rc, 
\label{sfMSMVE1}
\end{align}
which is a well known representation of the stable tail dependence function of an exchangeable min-stable sequence \cite{haan1984}. A result of \cite{maicanonicalspecrepofstabtail} states that the probability law of $\bm{Y}=(1/Z_1,1/Z_2,\ldots)$ becomes unique, if we additionally postulate that $\rho$ is concentrated on distribution functions $G$ satisfying $\int_{(0,\infty]}s^{-1}\,\mathrm{d}G(s)=1$. Survival functions of the form (\ref{sfMSMVE1}) are called min-stable multivariate exponential, since they imply $\bm{X} \sim n\,\min_{i=1,\ldots,n}\{\bmX^{(i)}\}$, where $n \in \N$ is arbitrary and $\bmX^{(i)}$ denote independent copies of $\bmX$. The min-stability property plays a fundamental role in multivariate extreme-value theory, since these are the only possible limiting distributions of componentwise minima of i.i.d.\ random vectors, after appropriate componentwise normalization, see \cite{resnickextreme}. Under the normalizing assumption $\ee[1/Z_1]=1$,  \cite{maicanonicalspecrepofstabtail} shows that the presented construction of the function $\ell(\bm{t})= \ee [ \max_{i \geq 1} t_i/Z_i] $ in (\ref{sfMSMVE1}) is general enough to comprise all possible stable tail-dependence functions associated with exchangeable min-stable sequences. In other words, by \cite[Theorem 5.3]{MaiSchererexMSMVE}, all driftless nnnd strong-idt processes necessarily admit a series representation as in Proposition~\ref{propstrongidt}, where $S_k \sim \epsilon_1+\ldots+\epsilon_k$ for an i.i.d.\ sequence of unit exponential random variables $(\epsilon_i)_{i\in\N}$ and $\kappa(\mathrm{d}s)=\mathrm{d}s$.
However, it should be noted that not all exchangeable min-stable random vectors can be obtained by this construction, since some exchangeable min-stable random vectors cannot be embedded into an exchangeable sequence \cite[Example 3.2]{maischerersimcopulas2017}. \\
Regarding related examples from the literature: $\alpha$-idt processes, see \cite{hakassou2012alphaidt,davydov2008strictly}, aggregate self-similar processes, see \cite{barczy2015dilatively,igloibarczydilativelystable}, and translatively stable processes, see \cite{hakassou2013idt}, are strong-idt processes up to scaling and time change, which implies their infinite divisibility. As we have seen in Corollary~\ref{cormarginaltrafoistimechange} a deterministic time change of a non-decreasing strong-idt process solely changes the one dimensional marginal distribution of $\bmX$, whereas a scaling of the strong-idt process corresponds to a scaling of the drift and a linear change of variables of the L\'evy measure of the strong-idt process. Therefore, the nnd instances of these processes uniquely correspond to an exchangeable min-id sequence $\bmX$ and their L\'evy measure can be obtained as the image measure of the L\'evy measure of a strong-idt process.

\subsubsection{Reciprocal Archimedean copulas}
\label{subsecreciparchim}
Choose $\kappa$ such that $\kappa(\{0\})=0$, $\kappa([0,\infty))=\infty$ and $\rho=\delta_{\hat{G}}$, with $\hat{G}$ being the unit Fr\'echet distribution function $\hat{G}(t)=\exp(-1/t)\id_{\{t> 0\}}$. This implies that
\begin{gather*}
    \gamma_{\kappa,\rho}(A)=\int_0^{\infty} \id_{\{\hat{G}(s\cdot)\in A\}} \kappa(\mathrm{d}s)=\kappa\lc\bigg\{ s\ \bigg\vert \exp\lc-\frac{1}{s\cdot}\rc\id_{\{\cdot > 0\}} \in A\bigg\}\rc.
\end{gather*}
The associated exchangeable min-id sequence $\bmX$ has survival function 
\begin{gather}
\label{eqnreciprocarchim}
\pp\lc \bmX>\bm t\rc=\exp\lc- \int_0^\infty 1-\prod_{i\in\N} \lc 1-\exp\lc -\frac{s}{t_i}\rc\rc\kappa(\mathrm{d}s)\rc,
\end{gather}
which is exactly the representation of an exchangeable sequence with reciprocal Archimedean copula as introduced in \cite{genestreciparchim}. Concretely, with the notation 
$\phi(t)=\int_0^{\infty} \exp(-tx)\kappa(\mathrm{d}x)$, we may rewrite
\begin{gather*}
\pp\lc \bmX>\bm t\rc = C_{\phi}\big( e^{-\phi(1/t_1)},e^{-\phi(1/t_2)},\ldots \big),
\end{gather*}
where for $(u_1,u_2,\ldots) \in [0,1]^{\N}$ the function
$$C_{\phi}(u_1,u_2,\ldots):=\frac{\prod_{\substack{A\subset \N,\\ \mid A\mid<\infty \text{ odd}}}\exp\lc -\phi\lc \sum_{k\in A} \phi^{-1}\lc -\log(u_k)\rc\rc\rc}{\prod_{\substack{A\subset \N,\\ \mid A\mid<\infty \text{ even}}}\exp\lc -\phi\lc \sum_{k\in A} \phi^{-1}\lc -\log(u_k)\rc \rc\rc } , $$
is the distribution function of an exchangeable sequence $\bm{U}$ and each component $U_i \sim \exp(-\phi(1/X_i))$ is uniformly distributed on $[0,1]$. The associated finite-dimensional margins of $C_{\phi}$ are called reciprocal Archimedean copulas, where the nomenclature is justified by some striking analogies with the concept of Archimedean copulas. E.g.\ the Galambos copula is obtained by choosing
\begin{gather*}
\kappa(\mathrm{d}s)= \frac{1}{\theta\,\Gamma(1+1/\theta)}\,s^{\frac{1}{\theta}-1}\,\mathrm{d}s
\end{gather*}
for some parameter $\theta >0$, which yields $\phi(t)=t^{-1/\theta}$. We refer the interested reader to \cite{genestreciparchim} for more details about reciprocal Archimedean copulas.

\subsubsection{Archimedean copulas with log-c.m.\ generator}
\label{exampleextarchimcop}
Let $\upsilon_M$ denote the univariate L\'evy measure of an infinitely divisible random variable $M\in(0,\infty)$. Define $\kappa$ by $\kappa(A);=\int_0^\infty \id_A(1/s) \upsilon_M(\mathrm{d}s)$ for all $A\in\bc([0,\infty))$ and choose $\rho=\delta_{\hat{G}}$, where $\hat{G}(t)=\lc 1-\exp(-t)\rc\id_{\{t> 0\}}$ denotes the distribution function of the unit exponential distribution. This implies that 
\begin{gather*}
    \gamma_{\kappa,\rho}(A)=\int_0^{\infty} \id_{\{1-\exp(-s\cdot)\in A\}} \upsilon_M(\mathrm{d}s)
\end{gather*}
and the associated exchangeable min-id sequence has survival function
\begin{gather*}
\pp\lc \bmX>\bm t\rc=\exp\lc- \int_0^\infty 1- \exp\lc -s\sum_{i\in\N}t_i\rc \upsilon_M(\mathrm{d}s)\rc.
\end{gather*}
It is easy to see that $\bmX$ has marginal survival function $\psi(t):=\ee\lk\exp(-tM)\rk$ and survival copula
$$C_{\psi}(\bm u)=\psi\lc\sum_{i\in\N} \psi^{-1}(u_i)\rc,\ \bm u\in [0,1]^\N,$$
which is an exchangeable Archimedean copula with completely monotone generator $\psi$ \cite{mcneil2009}. The definition of $\psi$ implies that $\psi$ is even log-completely monotone, which means that it has the special representation $\psi(t)=\exp(-g(t))$, where $g:[0,\infty)\to [0,\infty)$ is called the Laplace exponent of $\psi$ and $g$ is a Bernstein function, i.e.\ $\frac{\mathrm{d}}{\mathrm{d} t}g(t)$ is completely monotone. For example, the Gumbel copula, which is the only max-stable Archimedean copula, corresponds to $M\sim \alpha\textit{-stable}$ with $\psi(t)=\exp\lc-t^{\alpha}\rc, \alpha\in(0,1]$ and $g(t)=t^\alpha$.

To obtain all exchangeable Archimedean copulas on $[0,1]^\N$ define an extended chronometer via $H_t:=bt+Mt\id_{\{t\geq 0\}}$, where $b\geq 0$. $H$ has drift $b(t)=b\,t$ and L\'evy measure  
\begin{gather*}
\nu\lc \{x\in D^\infty(\RR) \mid x(t) = a\,t\,1_{\{t \geq 0\}},\,a \in A \}\rc=\upsilon_M(A),
\end{gather*}
which is the unique L\'evy measure corresponding to $\gamma_{\kappa,\rho}$.
The exchangeable min-id sequence $\tilde{\bmX}\in(0,\infty)^\N$ associated with $H$ has survival function
\begin{align*}
&\pp\lc \tilde{X}_1>t_1,\tilde{X}_2>t_2,\ldots\rc=\ee\lk \exp\lc-(M+b)\sum_{i\in\N} t_i\rc\rk=C_{\tilde{\psi}}\big( \tilde{\psi}(t_1),\tilde{\psi}(t_2),\ldots\big),\\
&\ \bmt \in [0,\infty)^\N,
\end{align*}
where $\tilde{\psi}(t):=\ee\lk\exp(-(b+M)t\rk=\exp(-bt)\psi(t)$ is log-completely monotone. Therefore, $\tilde{\bmX}$ has Archimedean survival copula $C_{\tilde{\psi}}$ and marginal survival function $\tilde{\psi}$. It is easy to see that with appropriate choices of the tuple $(b,\upsilon_M)$, resp. $(b,\psi)$, every log-completely monotone generator of an exchangeable Archimedean copula on $[0,1]^\N$ may be obtained by this construction.

The Archimedean copula $C_{\tilde{\psi}}$ itself is max-id, since decreasing transformations of min-id sequences are max-id by Lemma \ref{lemmatrafomaxtominid}. On the other hand, if we assume that an Archimedean copula $C_{\tilde{\psi}}$ is max-id, it is easy to deduce that $\tilde{\psi}$ necessarily corresponds to the Laplace transform of a non-negative infinitely-divisible random variable. Thus, an Archimedean copula on $[0,1]^\N$ is max-id if and only if $\tilde{\psi}$ is log-completely monotone.\footnote{Note that max-id Archimedean copulas are not the only positive lower orthant dependent (PLOD) Archimedean copulas. The set of completely monotone generators, a superset of log-completely monotone generators, corresponds to the class of extendible Archimedean copulas that are PLOD. Thus, a PLOD Archimedean copula is not necessarily max-id.}

\subsection{Subordination of L\'evy process by extended chronometers}
\cite{BarndorffNielsen2006InfiniteDF} prove that a L\'evy process subordinated by an extended chronometer remains infinitely divisible. Formally, let $(L_t)_{t \geq 0}$ denote a L\'evy process and let $(H_t)_{t \geq 0}$ denote an extended chronometer. If $L$ is a subordinator $(Y_t)_{t \geq 0}:=(L_{H_t})_{t \geq 0}$ defines an extended chronometer by \cite[Theorem~ 7.1]{BarndorffNielsen2006InfiniteDF}. It can be shown that $Y$ has independent increments if $H$ has independent increments and that $Y$ is strong-idt if $H$ is strong-idt. 

Except for the quite simple example of (reciprocal) Archimedean copulas we have only seen examples of (killed) strong-idt and (killed) additive subordinators. An example of a c\`adl\`ag id-process which is (usually) neither strong-idt nor has independent increments is given by
the non-negative solution to the Ornstein--Uhlenbeck type stochastic differential equation
\begin{align}
\label{eqnouprocess}
\mathrm{d}V_t=-\lambda V_t\mathrm{d}t+\mathrm{d}Z_{\lambda t},
\end{align}
where $\lambda>0$ and $(Z_t)_{t \geq 0}$ is a L\'evy process satisfying 
$$\int_{D^\infty\big([0,\infty)\big)}\max\{0,\log(\vert x(t)\vert )\}\nu_Z(\mathrm{d}x)<\infty$$ for all $t \geq 0$, where $\nu_Z$ denotes the L\'evy measure of $Z$. In case $(Z_t)_{t \geq 0}$ is a L\'evy subordinator, $V$ is the square of the stochastic volatility process of the Barndorff-Nielsen--Shepard model, see \cite{carr2003stochastic,barndorffnielsenshepard}. Moreover, the process $H_t:=\int_0^t V_s\mathrm{d}s$ remains infinitely divisible \cite[Section 4]{BarndorffNielsen2006InfiniteDF}. $H_t$ is known as the integrated or cumulated volatility up to time $t$, which is important when analyzing the realized variance or the quadratic variation of a pricing model, see \cite{Komm2016}. E.g.\ \cite{duan2010} model the CBOE Volatility Index at time $t$ via $const.+\frac{1}{t}\ee_Q\lk\int_0^t V_s \mathrm{d}s\rk$, where $\ee_Q$ denotes expectation w.r.t.\ some risk neutral probability measure $Q$. Interestingly, we can generalize the ideas of \cite{barndorffnielsenshepard} and \cite[Example (2.2)]{mansuy2005processes} to id-processes to obtain that
$$\lc H^{(\kappa)}_t\rc_{t\geq 0} :=\lc \int_0^t V_s \kappa(\mathrm{d}s)\rc_{t\geq 0}$$
defines an extended chronometer for every non-negative c\`adl\`ag id-process $V$ and Radon measure $\kappa$ on $[0,\infty)$, which is usually neither strong-idt nor has independent increments.  


\begin{prop}
\label{propositionintegratedidprocess}
Let $(V_s)_{s \geq 0}\in D^\infty \lc[0,\infty) \rc $ denote a non-negative c\`adl\`ag id-process with drift $b_V$ and L\'evy measure $\nu_V$. Moreover, let $\kappa$ denote a measure on $[0,\infty)$ such that $\kappa\lc[0,t]\rc<\infty$ for all $t \geq 0$. Then,
$$(H^{(\kappa)}_t)_{t \geq 0}:=\lc \int_{0}^t V_s \kappa(\mathrm{d}s)\rc_{t \geq 0}$$
defines an extended chronometer with L\'evy measure 
$$\nu_\kappa\lc A\rc:=\nu_V \lc \bigg\{ x\in D^\infty\big([0,\infty)\big) \ \bigg\vert\ \int_{0}^\infty x(s)\kappa(\mathrm{d}s)\in A,\ \int_{0}^\infty x(s)\kappa(\mathrm{d}s)>0\bigg\}\rc $$
for all $A\in \bc\lc D^\infty \lc [0,\infty)\rc \rc$ and drift $b^{(\kappa)}(\cdot)=\int_{0}^\cdot b_V(s)\kappa(\mathrm{d}s)$.
\end{prop}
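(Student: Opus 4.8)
The plan is to verify the three defining properties of an extended chronometer for $H^{(\kappa)}$—namely that it is non-negative, non-decreasing, c\`adl\`ag, and infinitely divisible—and then to identify its L\'evy measure and drift via the L\'evy--Khintchine formula for the Laplace transform in Proposition~\ref{propositionltchronometer}. The non-negativity and monotonicity are immediate: since $V_s \geq 0$ almost surely (as a non-negative process) and $\kappa$ is a (non-negative) measure, the map $t \mapsto \int_0^t V_s\,\kappa(\mathrm{d}s)$ is non-decreasing and non-negative on $[0,\infty)$. Right-continuity with left limits of $t \mapsto \int_0^t V_s\,\kappa(\mathrm{d}s)$ follows from the assumption $\kappa([0,t])<\infty$ for all $t$ together with the c\`adl\`ag (hence locally bounded on compacts) paths of $V$ and dominated convergence; care is needed only to allow the value $+\infty$, which is harmless since the integrand is non-negative. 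Thus $H^{(\kappa)} \in D^\infty([0,\infty))$ is nnnd.

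**Infinite divisibility and the L\'evy triplet.** For the infinite divisibility, the cleanest route is to compute the multivariate Laplace transform of $H^{(\kappa)}$ directly. Fix $\bm z \in [0,\infty)^d$ and $\bm t \in [0,\infty)^d$; then $\sum_{i=1}^d z_i H^{(\kappa)}_{t_i} = \int_0^\infty \big(\sum_{i=1}^d z_i \id_{[0,t_i]}(s)\big) V_s\,\kappa(\mathrm{d}s)$, which is a non-negative linear functional of the path of $V$. Writing $w(s) := \sum_{i=1}^d z_i \id_{[0,t_i]}(s)$, one wants to express $\ee[\exp(-\int_0^\infty w(s) V_s\,\kappa(\mathrm{d}s))]$ in L\'evy--Khintchine form. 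Here I would approximate the integral $\int_0^\infty w(s) V_s\,\kappa(\mathrm{d}s)$ by finite Riemann-type sums $\sum_j w(s_j) V_{s_j}\,\kappa(\Delta_j)$ over a partition of $[0,\infty)$ (truncated at a large time and refined), each of which is a non-negative linear combination of the $V_{s_j}$ and hence infinitely divisible because $V$ is an id-process; passing to the limit, using that the id property and the L\'evy--Khintchine representation are preserved under the relevant limits (weak convergence of infinitely divisible laws, as in \cite{satolevyinfinitely}), yields that $\int_0^\infty w(s) V_s\,\kappa(\mathrm{d}s)$ is infinitely divisible for each such $w$, and more precisely that all finite-dimensional distributions of $H^{(\kappa)}$ are jointly infinitely divisible. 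Applying Proposition~\ref{propositionltchronometer} to $V$ and substituting into the Riemann-sum approximation, the drift term $-\sum z_i b(t_i)$ of $V$ contributes $-\int_0^\infty w(s) b_V(s)\,\kappa(\mathrm{d}s) = -\sum_{i=1}^d z_i \int_0^{t_i} b_V(s)\,\kappa(\mathrm{d}s)$, identifying $b^{(\kappa)}(\cdot) = \int_0^\cdot b_V(s)\,\kappa(\mathrm{d}s)$, and the integral-against-$\nu_V$ term becomes $\int_{D^\infty([0,\infty))}\big(1 - \exp(-\int_0^\infty w(s) x(s)\,\kappa(\mathrm{d}s))\big)\nu_V(\mathrm{d}x)$, which is exactly $\int_{D^\infty([0,\infty))}\big(1 - \exp(-\sum_{i=1}^d z_i (\int_0^\infty x(s)\id_{[0,t_i]}(s)\,\kappa(\mathrm{d}s)))\big)\nu_V(\mathrm{d}x)$. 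Recognizing this as the pushforward of $\nu_V$ under the map $x \mapsto (\int_0^\cdot x(s)\,\kappa(\mathrm{d}s))$, restricted to where that image is not the zero function, gives the claimed $\nu_\kappa$.

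**Verifying $\nu_\kappa$ is a L\'evy measure and the drift is admissible.** It then remains to check that $\nu_\kappa$ satisfies the two conditions defining a L\'evy measure on $D^\infty([0,\infty))$: that it charges no mass to $0_{D^\infty([0,\infty))}$ (which is built into the definition of $\nu_\kappa$ by the constraint $\int_0^\infty x(s)\,\kappa(\mathrm{d}s) > 0$), and that $\int \min\{y(t),1\}\,\nu_\kappa(\mathrm{d}y) < \infty$ for all $t \geq 0$, i.e.\ $\int_{D^\infty([0,\infty))}\min\{\int_0^t x(s)\,\kappa(\mathrm{d}s),\,1\}\,\nu_V(\mathrm{d}x) < \infty$. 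The latter follows by splitting the domain according to whether $\sup_{s \leq t} x(s)$ is small or large, bounding $\int_0^t x(s)\,\kappa(\mathrm{d}s) \leq \kappa([0,t])\sup_{s\le t} x(s)$ on the first piece and using $\min\{\cdot,1\}\le 1$ on the second, and invoking that $\nu_V$ is a L\'evy measure for $V$ so that $\int \min\{x(t'),1\}\,\nu_V(\mathrm{d}x) < \infty$ for every $t'$ (together with monotone/dominated convergence over a countable dense set of times, exploiting the c\`adl\`ag structure to control $\sup_{s\le t} x(s)$). Finally, $b^{(\kappa)} = \int_0^\cdot b_V(s)\,\kappa(\mathrm{d}s)$ lies in $M^0_\infty \cap D([0,\infty))$ because $b_V \in M^0_\infty \cap D([0,\infty))$ is non-negative, non-decreasing, real-valued and c\`adl\`ag, $\kappa$ is a non-negative Radon measure finite on compacts, and the same approximation argument as for the paths of $V$ shows the primitive is real-valued and c\`adl\`ag.

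**Main obstacle.** I expect the main difficulty to be the limiting argument that passes from the finite Riemann-sum approximations—each manifestly a non-negative linear functional of finitely many coordinates of the id-process $V$, hence id with an explicit L\'evy--Khintchine form inherited from Proposition~\ref{propositionltchronometer}—to the integral $\int_0^\infty w(s) V_s\,\kappa(\mathrm{d}s)$ itself, while simultaneously keeping track of all finite-dimensional joint distributions of $H^{(\kappa)}$ and handling the possibility that $V$ (and hence $H^{(\kappa)}$) takes the value $+\infty$. The integrability estimate on $\nu_\kappa$ near the zero path is the second delicate point, since $\nu_V$ may be infinite and one must use the c\`adl\`ag regularity of paths in its support to turn pointwise control into control of the running supremum; this is precisely where the assumption $\kappa([0,t]) < \infty$ is essential.
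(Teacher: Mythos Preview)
Your approach is viable but takes a genuinely different route from the paper. The paper does not use Riemann-sum approximations at all: instead it invokes the generalized L\'evy--It\^o representation of \cite[Proposition~3.1 and Theorem~5.1]{rosinskiinfdivproc} to write a version of $V$ pathwise as
\[
V_s = b_V(s) + \int_{D^\infty([0,\infty))_+} x(s)\,N(\mathrm{d}x),
\]
where $N$ is a Poisson random measure with intensity $\nu_V$ (concentrated on non-negative paths by the same argument as in Proposition~\ref{propositionltchronometer}). Integrating in $s$ against $\kappa$ and applying Fubini to swap the $\kappa$- and $N$-integrals gives immediately $H^{(\kappa)}_t = b^{(\kappa)}(t) + \int x^{(\kappa)}(t)\,N(\mathrm{d}x)$ with $x^{(\kappa)}(t)=\int_0^t x(s)\,\kappa(\mathrm{d}s)$; the Laplace transform then follows from the standard exponential formula for Poisson integrals \cite[Proposition~3.6]{resnickextreme}, and the drift and L\'evy measure can be read off directly.

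What each approach buys: the paper's route is shorter and sidesteps entirely the limiting argument you flag as the main obstacle --- no convergence of id triplets is needed, and the identification of $\nu_\kappa$ as a pushforward is immediate from the Poisson-integral representation rather than emerging in the limit. Your route is more elementary in that it avoids the pathwise L\'evy--It\^o machinery, but the price is exactly the delicate limit you identified. One caution on your verification that $\nu_\kappa$ is a L\'evy measure: your bound via $\sup_{s\le t} x(s)$ is clean when the paths in the support of $\nu_V$ are non-decreasing, but here $V$ is only assumed non-negative, not non-decreasing, so controlling $\int \min\{\sup_{s\le t}x(s),1\}\,\nu_V(\mathrm{d}x)$ from pointwise integrability is not automatic. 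In practice this integrability is a consequence of the finiteness of the Laplace transform once you have established it, so you can recover it a posteriori rather than proving it directly.
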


\begin{proof}
The proof can be found in Appendix \ref{appendixproofs}.
\end{proof}

If we subordinate a L\'evy subordinator $L$ by $H^{(\kappa)}$ we can obtain an explicit representation of the survival function of the associated exchangeable min-id sequence $\bmX^{(L,V,\kappa)}$. Let $\psi(a):=-\log\lc \ee\lk \exp\lc-aL_1\rc\rk\rc$ denote the Laplace exponent of $L$. The min-id sequence $\bmX^{(L,V,\kappa)}$ associated with $\lc L_{H^{(\kappa)}_t}\rc_{t \geq 0}$ has the following survival function:
\begin{align*}
    &\pp\lc X_1>t_1,\ldots ,X_d>t_d\rc = \ee\lk \exp\lc -\sumd L_{ H^{(\kappa)}_{t_i}} \rc \rk \\
    &=\ee\lk \ee\lk\exp\lc  -\sumd L_{ H^{(\kappa)}_{t_i}} \rc \bigg\vert H \rk \rk\\
    &=\ee\lk -\exp\lc \sumd H^{(\kappa)}_{t_i}\big( \psi(d-i+1)-\psi(d-i) \big) \rc \rk\\
    &=\exp\Bigg( - \sumd \big( \psi(d-i+1)-\psi(d-i) \big)b^{(\kappa)}_{t_i}   \\
    &-\int_{M_\ell} 1-\exp\lc - \sumd \big( \psi(d-i+1)-\psi(d-i) \big) x^{(\kappa)} _{t_i}\rc \nu_V(\mathrm{d}x)\Bigg),
\end{align*}
where $x^{(\kappa)}_t:=\int_0^t x(s)\kappa(\mathrm{d} s) $. Thus, if $\nu_V$ and $\psi$ are known, we obtain an explicit analytic representation of the survival function of $\bmX^{(L,V,\kappa)}$. In particular this is the case if we start with ``simple'' processes $L$ and $V$. E.g.\ choosing $V$ as a Cox--Ingersoll--Ross process, see \cite{cir1985} yields an explicit representation of the surival function of $\bmX^{(L,V,\kappa)}$, since the L\'evy measure of such processes can be obtained by an application of Proposition \ref{propositionintegratedidprocess} to a scaled and time changed squared Bessel process, see \cite[Example 2.24]{rosinskiinfdivproc} for more details on the L\'evy measure of squared Bessel processes.  

Generally, this approach yields a flexible way to combine two ``simple'' extended chronometers to an extended chronometer which is usually neither strong-idt nor additive.

\subsection{Finite exponent measures}

Let $\bmX\in\minf^\N$ denote a min-id sequence with $0<\pp(\bmX=\binfty)<1$. The associated global exponent measure $\mu$ is a finite measure on $E^\N_\binfty$ with total mass $c:=-\log\lc \pp\lc \bmX =\binfty\rc\rc$. \cite[Example 5.6]{resnickextreme} shows that there exists a sequence of i.i.d.\ sequences $(\bm Z^{(i)})_{i\in\N}\in\minf^{\N\times \N}$ with $\bm Z^{(1)}\sim \bm Z\sim \mu/c=:\tilde{\pp}$ and an independent Poisson random variable $N$ with mean $c$ such that $ \min_{1\leq i\leq N} \bm Z^{(i)}\sim \bmX$. This can be easily verified by
\begin{align*}
    \pp\lc  \min_{1\leq i\leq N} \bm Z^{(i)}>\bmx\rc&=\exp\lc-c\rc \sum_{i\in\N} \frac{ c^i \tilde{\pp}\lc \bm Z >\bmx\rc^i}{i!}=\exp\lc -c\lc 1-\tilde{\pp}\big( (\bmx,\binfty]\big) \rc \rc \\
        &=\exp\lc -\mu\lc (\bmx,\binfty]^\complement\rc  \rc =\pp\lc \bmX>\bmx\rc.
\end{align*}
Obviously, $\bmX$ is exchangeable if and only if $\bm Z$ is exchangeable. Then, Theorem~\ref{theoremexponentmeasureisiidmixture} tells us that $\mu$ can be decomposed into $\mu=\mu_{b}+\mu_\gamma$. Note that the existence of some $t\in\RR$ such that $$\mu_{b,n}\lc\big((\underbrace{t,\ldots,t}_{n\text{ times}}),\binfty\big]^\complement\rc=nb(t)\xrightarrow{n\to\infty}\infty$$
is equivalent to $b\not=0_{D^\infty(\RR)}$. Therefore, if $\mu$ is finite, $\mu_{b}=0$ and the associated id-process $H$ is driftless. Thus, $\mu$ is given by $\mu=\mu_\gamma=\int_{\overline{M}^0_\infty} \otimes_{i\in\N} \pp_G\gamma(\mathrm{d}G) $. Furthermore, Theorem~\ref{theoremexponentmeasureisiidmixture} implies the existence of a unique L\'evy measure $\nu$ such that $\nu(A)=\gamma(\{G\in \overline{M}^0_\infty\mid G=1-\exp(-x(\cdot))\text{ for some } x\in A\})$. An application of the monotone convergence theorem shows that
\begin{align*}
    c&=\mu\lc E^\N_\binfty\rc=\lim_{t\to\infty}\mu\lc (\bm t,\binfty]^\complement\rc =\lim_{t\to\infty}\int_{M^0_\infty} 1-\prod_{i\in\N} \exp\lc-x(t)\rc\nu(\mathrm{d}x)\\
    &\underset{x\not= 0}{=}\nu\lc M^0_\infty\rc,
\end{align*}
i.e. $\nu$ is finite as well. Now, Example~\ref{examplefinitelevymeasure} implies that the associated (driftless) id-process $H$ is given by 
$$H\sim \sum_{i=1}^N h_i,$$
where $(h_i)_{i\in\N}$ are i.i.d.\ stochastic processes on $M^0_\infty$ with distribution $\nu/c$ and $N$ is an independent Poisson random variable with mean $c$. Therefore, if $\mu$ is finite, $\bmX$ (resp.\ $H$) admits a simple construction method via Poisson maxima (resp.\ sums) of i.i.d.\ objects.

The correspondence of the exchangeable sequence $\bm Z$ with the finite exponent measure $\mu$ seems to imply that finite extendible exponent measures $\mu_d$ can be represented via extendible random vectors. However, as the following paragraphs show, this is not always possible, since the global exponent measure $\mu$ can have infinite mass even if $\mu_d$ is finite for every $d\in\N$. To see this assume that $\bmX_d\in\minf^d$ is exchangeable and min-id with finite exponent measure $\mu_d$. Moreover, assume that $\bmX_d$ is extendible to an exchangeable min-id sequence $\bmX$, which w.l.o.g.\ satisfies Condition $(\Diamond)$. This ensures the existence of a global exponent measure $\mu$ such that $\mu_d$ satisfies the properties in Proposition \ref{propositionextendibleexponentmeasure}.

\cite[Example 5.6]{resnickextreme} and Proposition~\ref{propositionexchangeabilityexponentmeasure} imply that there exists a Poisson random variable $N_d$ with mean $c_d=\mu_d\lc E^d_\binfty\rc$ and i.i.d.\ exchangeable random vectors $(\bm Z^{(i)}_d)_{i\in\N}\in\minf^d$ with distribution $\mu_d/c_d$ such that $\bmX_d\sim \min_{1\leq i\leq N_d} \bm Z^{(i)}_d$. Therefore, $\mu_d$ can always be represented by a random vector $\bm Z^{(1)}_{d}\sim \bm Z_d\sim \mu_d/c_d$ and a constant $c_d$. 

In the elaborations above we have seen that a finite global $\mu$ implies that $\bm Z_d$ is extendible to a sequence $\bm Z$. Since we know that $\mu_d$ is extendible to a global $\mu$ it would be tempting to assume that $\bm Z_d$ is also extendible to a sequence $\bm Z$, independent of the total mass of $\mu$. However, as the following calculations show, $\mu$ being finite is also a necessary condition for $\bm Z_d$ being extendible.

We begin with an analysis of $\mu_d$. Independently of the total mass of $\mu$ it is always possible to decompose $\mu_d$ into $\mu_{b,d}+\mu_{d,\gamma}$ by Theorem~\ref{theoremexponentmeasureisiidmixture}. Therefore, we can define the constant  $a_d=\mu_{d,\gamma}(E^d_\binfty)/c_d=1-\mu_{b,d}(E^d_\binfty)/c_d\in[0,1]$. Now, $\mu_d$ can be generated as follows:
\begin{enumerate}
    \item Draw a Bernoulli random variable $B$ with success probability $a_d$.
    \item If $B=1$, draw a random variable $\bm Y^{(d,1)}$ with distribution $\mu_{d,\gamma}(\cdot)/(c_d a_d)$.
    \item If $B=0$, draw a random variable $\bm Y^{(d,2)}$ with distribution $\mu_{b,d}(\cdot)/\big(c_d(1-a_d)\big)$. Note that $\bm Y^{(d,2)}$ is supported on $\{ x\in\minf^d \mid x_i=\infty \text{ for all but one }i\}$.
    \item $\mu_d(A)=c_d\pp(B\bm Y^{(d,1)}+(1-B)\bm Y^{(d,2)}\in A).$
\end{enumerate}
Taking a closer look at $\bm Y^{(d,2)}$ reveals that $\bm Y^{(d,2)}$ can never be embedded in a random vector on $\minf^{d^\prime}$, $d^\prime>d$, if we do not allow for mass on $\times_{i=1}^d \{\infty\}$. Unfortunately, even if we allow for mass on $\times_{i=1}^d \{\infty\}$, an embedding of $\bm Y^{(d,2)}$ in a sequence requires $\mu_d(\{\infty\}^d)=\infty$ and $\mu(E^\N_\binfty)=\infty$, since $\mu_{d,b}(E^d_\binfty)=d\lim_{t\to\infty}b(t)\xrightarrow{d\to\infty}\infty$ if and only if $b\not = 0_{D^\infty(\RR)}$. Therefore, $\bm Y^{(d,2)}$ can never be extended to a sequence and the random variable $\bm Z_d=\bm Y^{(d,1)}+\bm Y^{(d,2)}$ can only be extendible if $a_d=0$. Note that the necessity of $a_d=0$ is not based on the fact that $\bm Y^{(d,2)}$ is not extendible, but rather on the fact that the presence of $\bm Y^{(d,2)}$ implies that the only possible extension of the distribution of $\bm Y^{(d,2)}$ is an infinite measure. Intuitively, this resembles the fact that $\bmX$ cannot have finite exponent measure $\mu$ if it can be represented as the minimum of a non-trivial i.i.d.\ sequence (represented by $\bm Y^{(d,2)}$) and an independent exchangeable min-id sequence (represented by $\bm Y^{(d,1)}$).

It remains to investigate under which circumstances $\bm Y^{(d,1)}$ is extendible. Observe that in case $a_d=0$ the extendibility of $\bm Y^{(d,1)}$ to a sequence $\bm Y^{(1)}$ implies that $\mu$ is finite. Therefore, $\mu$ being finite is not only a sufficient but also necessary criterion for the extendibility of $\bm Z_d$. Note that the distribution of the first $d$ components of $\bm Y^{(1)}$ may not be exactly $\mu_d/c_d$ due to the removal of $\times_{i=1}^d \{\infty\}$ and the possibility of $Y^{(1)}_1=\ldots =Y^{(1)}_d=\infty$. However, the distribution of  $\bm Z_d$ can be obtained as the conditional distribution of $(Y^{(1)}_1,\ldots,Y^{(1)}_d)$ given $(Y^{(1)}_1,\ldots,Y^{(1)}_d)\not =\binfty$. 

Our discussion is summarized in the following paragraph. Let $(E_i)_{i\in\N}$ denote a sequence of unit exponential random variables. The global extensions $\mu$ of $\mu_d$ can be classified into two possible cases:

\begin{enumerate}
    \item[Finite $\mu$:] In this case $\bm Z_d$ is extendible to an exchangeable sequence $\bm Z$ and the global exponent measure $\mu$ is in one-to-one correspondence with the tuple $(\bm Z,\pp(\bmX=\binfty))$. It is easy to see that the associated extended chronometer $H$ is driftless with $\pp\lc H=0_{D^\infty(\RR)}\rc>0$.
    \item[Infinite $\mu$:] In this case $\bm Z_d$ is not extendible. Nevertheless, we know that $\mu_d$ is extendible to a global $\mu$. Thus, $\pp(\bmX=\binfty)=\exp\lc-\mu\lc E_\binfty^\N\rc \rc=0$ and the associated extended chronometer $H$ satisfies 
    $$\pp\lc H=0_{D^\infty(\RR)}\rc=\pp\lc \big\{E_i>\lim_{t\to\infty} H_t \text{ for all } i\in\N\big\}\rc=\pp(\bmX=\binfty)=0.$$
     Additionally, the drift of $H$ satisfies $\lim_{t\to\infty}b(t)<\infty$, since $\lim_{t\to\infty}b(t)=\infty$ would require that $\pp(\bmX_d=\binfty)\leq \lim_{t\to\infty}\exp(-db(t))=0$. Moreover, we can deduce that 
     \begin{align*}
         0&<\pp(\bmX_d=\binfty)=\pp\lc \big\{E_i>\lim_{t\to\infty} H_t \text{ for all } 1\leq i\leq d\big\}\rc\\
         &\leq \pp\lc 0<\lim_{t\to\infty} H_t<\infty\rc.
     \end{align*}
     Therefore, $H$ is almost surely non-zero and bounded with positive probability.
\end{enumerate}

\subsection{Approximation of extended chronometer via extended chronometers with finite L\'evy measure}
\label{subsecapproxextchronfinlevymeasure}
Example \ref{examplefinitelevymeasure} shows that a driftless extended chronometer $H$ with finite L\'evy measure $\nu$ may be simulated via finitely many i.i.d.\ copies $\lc h_i\rc_{i\in\N}$ of a stochastic process $h\sim\nu(\cdot)/\nu\lc D^\infty(\RR)\rc$. $H$ may then be represented as $(H_t)_{t\in\RR}\sim \lc\sum_{i=1}^{\tilde{N}} h_i(t)\rc_{t\in\RR}$, where $\tilde{N}$ denotes a Poisson random variable with mean $\nu\lc D^\infty(\RR)\rc$. 

In case $\nu$ is an infinite measure, Proposition \ref{propositionltchronometer} implies that there exists a Poisson random measure $N:=\sum_{i\in\N} \delta_{h_i}$ on $\{ x\in D^\infty(\RR) \mid x \text{ nnnd}\} $ with intensity measure $\nu$ such that
$$\lc H_t \rc_{t\in\RR}:= \lc \sum_{i\in \N} h_i(t)\rc_{t\in\RR},$$
defines a nnnd id-process with L\'evy measure $\nu$ \cite[Proposition 2.10]{rosinskiinfdivproc}.
Clearly, for every $s\in\RR$ and $\epsilon>0$, the extended chronometer defined via
$$\lc H^{(s,\epsilon)}_t \rc_{t\in\RR}:=\lc \sum_{ \substack{i\in \N \\ h_i(s)>\epsilon}} h_i(t)\rc_{t\in\RR},$$
has finite L\'evy measure $\nu_{s,\epsilon}:=\nu\lc \cdot\cap \{ x(s)>\epsilon\}\rc$. This implies that the nnnd id-process
$$\lc H_t^{(s)}\rc_{t\in\RR}:=\lc \lim_{\epsilon\to 0} H^{(s,\epsilon)}_t\rc_{t\in\RR}=\lc\sum_{ \substack{i\in \N \\ h_i(s)>0}} h_i(t) \rc_{t\in\RR}$$ 
has L\'evy measure $\nu_{s,0}:=\nu\lc \cdot\cap \{ x(s)>0\}\rc$, which is generally an infinite measure that is not equal to $\nu$. However, it is important to observe that 
$$\lc H_t \rc_{t\leq s}= \lc H_t^{(s)}\rc_{t\leq s}.$$
Thus, if one is only interested in the path of the extended chronometer $H$ up to time $s$, one may simulate $H^{(s,\epsilon)}$ for some small $\epsilon>0$ to obtain an approximation of $\lc H_t\rc_{t\leq s}$\footnote{A similar reasoning also allows to approximate a general id-process by id-processes with finite L\'evy measure.}. 

The just described approximation procedure of an extended chronometer is well-known when $H$ is an additive process. In this case, $\nu_{s,\epsilon}$ can be represented as a finite Poisson random measure $\lc (U_i,A_i)\rc_{1\leq i\leq \tilde{N}}$ on $[0,s]\times (\epsilon,\infty]$ and $\lc H^{(s,\epsilon)}_{t}\rc_{t\in\RR}=\lc \sum_{1\leq i\leq \tilde{N}} A_i\id_{\{t\geq U_i\}}\rc_{t\in\RR}=\lc \sum_{U_i\leq t} A_i\rc_{t\in\RR}$ is known as a compound Poisson process. 
However, note that nnnd id processes with independent increments are the only nnnd id processes which may be approximated by compound Poisson processes $H^{(s,\epsilon)}$. This is due to the fact that compound Poisson processes have independent increments and the independent increments property is preserved under the just described approximation scheme when $\epsilon\to 0$.

The exchangeable min-id sequences $\bmX^{(s,\epsilon)}$, $\bmX^{(s)}$ and $\bmX$ associated to $H^{(s,\epsilon)}$, $H^{(s)}$ and $H$ obviously satisfy $\pp\lc \bmX^{(s,\epsilon)}>\bmt\rc\geq \pp\lc \bmX^{(s,0)} >\bmt\rc \geq \pp\lc \bmX>\bmt\rc$ for all $\bmt\in[-\infty,\infty)^\N$. Moreover, $\pp\lc \bmX^{(s,0)} >\bmt\rc = \pp\lc \bmX>\bmt\rc$ for all $\bmt\in[-\infty,s]^\N$, since $\lc H^{(s)}\rc_{t\leq s}= \lc H_t\rc_{t\leq s}$. Therefore, one may approximate the exchangeable min-id sequence $\bmX$ associated to the extended chronometer $H$ via a simulation of $\bmX^{(s,\epsilon)}$ associated to the extended chronometer $H^{(s,\epsilon)}$ with finite L\'evy measure.


\subsection{Overview of established families under the present umbrella}
Figure \ref{Fig_subfamilies} provides a graphical overview of the established families of exchangeable min-id sequences introduced in this paper. 
\begin{figure}[H]
    \centering
\begin{tikzpicture}[scale=0.9]
\draw (3,10.5) node[right=1pt] {\bf{Exchangeable sequences}};
\path[draw=black](3.0,0.0) rectangle (16.0,10.0);
\draw (3,0.5) node[right=1pt] {Min-id};

\path[draw=yellow,fill=yellow!80!black,opacity=0.5](4.0,1.0) rectangle (10.0,6.0);
\draw (4,1.5) node[right=1pt] {Exogenous shock model};

\path[fill=green!80!black,opacity=0.5](8.0,2.0) rectangle (10.0,6.0);
\draw (8.25,3.5) node[right=1pt] {Sato-};
\draw (8.25,3) node[right=1pt] {frailty};

\path[draw=red,thick,fill=red!80!black,opacity=0.5](5.0,4.0) rectangle (12.0,8.0);
\draw (5,7.5) node[right=1pt] {Min-stable};
\path[draw=yellow,fill=yellow!80!black,opacity=0.5](5.0,4.0) rectangle (10.0,6.0);
\draw (5,4.5) node[right=1pt] {Marshall--Olkin};

\path[draw=blue,fill=blue!80!black,opacity=0.5](10.0,2) rectangle (15.0,6.0);
\draw (11,3) node[right=1pt] {Archimedean copulas};
\draw (11,2.5) node[right=1pt] {(log-c.m. generator)};

\draw (10,5.5) node[right=1pt] {Gumbel};
\draw (10,5) node[right=1pt] {copula};

\path[draw=brown,fill=brown!80!black,opacity=0.5](10.0,6) rectangle (15.0,9);
\draw (11,8.5) node[right=1pt] {recip.\ Archim.\ copula};

\draw (10,7.5) node[right=1pt] {Galambos};
\draw (10,7) node[right=1pt] {copula};

\draw (8.0,5) node[right=1pt] { $\alpha$-stable};

%

\draw[fill=gray!75] (10,6) circle (2pt);
\draw[->] (9.5,9.25) -- (10,6);
\draw (9.5,9.25) node[above=1pt] {independence \& comonotonicity};

\end{tikzpicture}
\caption{Overview of established exchangeable min-id sequences under the present umbrella.}
\label{Fig_subfamilies}
\end{figure}
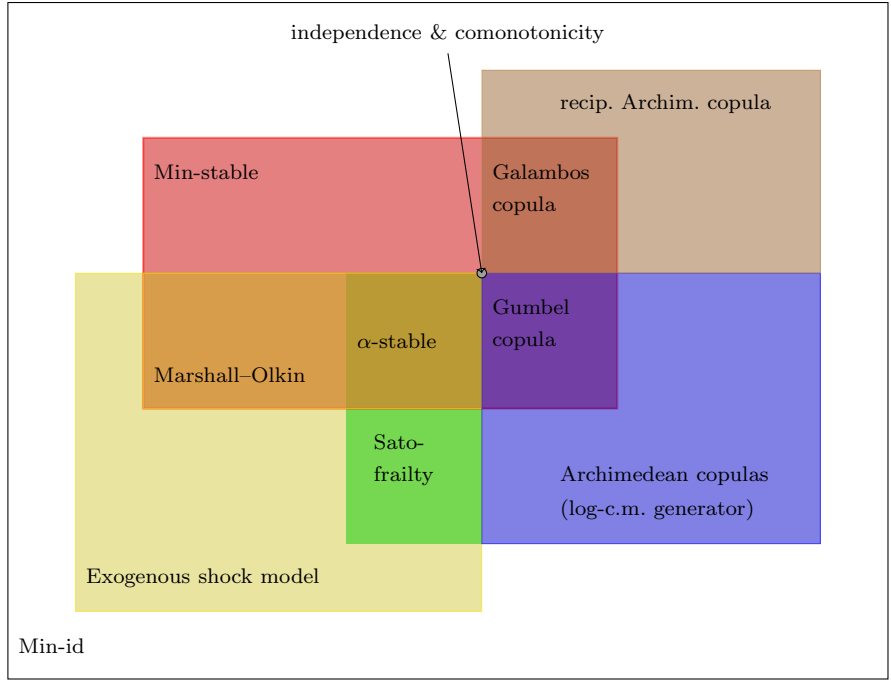

Section \ref{subsectiongenmarsholkin} provides that the class of extendible exogenous shock models is contained in the class of exchangeable min-id sequences. Since extendible exogenous shock models are associated to the class of nnnd additive processes \cite{Sloot2020b,MaiSchenkSchererexchangeableexshock}, we deduce that the class of extendible Marshall-Olkin distributions is contained in the class of extendible exogenous shock models as the class of extendible Marshall-Olkin distributions corresponds to the class of nnnd L\'evy processes \cite{janphd}. Furthermore, the class of exchangeable Sato-frailty sequences is included in the class of extendible exogenous shock models as it is associated to the class of (additive) Sato-subordinators \cite{Maiselfdecomp}. Moreover, the class of exchangeable Sato-frailty sequences only intersects with the class of Marshall-Olkin distributions when the associated Sato-subordinator is the $\alpha$-stable subordinator, which is the only Sato-subordinator that is also a L\'evy process \cite[Remark 16.2]{satolevyinfinitely}. 

Section \ref{subseccomumbrellareciparchimminstable} shows that the class of exchangeable min-stable sequences with exponential margins is contained in the class of exchangeable min-id sequences and \cite{MaiSchererexMSMVE} show that it corresponds to the class of nnnd strong-idt processes.  Since the class of nnnd L\'evy processes is precisely the intersection of the classes of nnnd additive processes and nnnd strong-idt processes, the class of extendible Marshall--Olkin distributions is precisely the intersection of the class of extendible exogenous shock models and the class of exchangeable min-stable sequences with exponential margins. Furthermore, \cite{genestreciparchim} show that the Galambos copula is the only copula which belongs both to the class of copulas of exchangeable min-stable sequences (also called extreme value copulas) and to the class of reciprocal Archimedean copulas. Similarly, \cite{genestrivest1989} show that the Gumbel copula is the only copula which belongs both to the class of extreme value copulas and to the class of Archimedean copulas.

The class of extendible exogenous shock models does not intersect with the class of Archimedean copulas with log-completely monotone generator except for independence and comonotonicity, since the nnnd id-process associated to an Archimedean copula with log-completely monotone generator cannot have independent increments according to Section \ref{exampleextarchimcop}. A similar statement applies to reciprocal Archimedean copulas according to Section \ref{subsecreciparchim}. Comparing the L\'evy measures associated to reciprocal Archimedean copulas  and Archimedean copulas with log-completely monotone generator, we obtain that their intersection can only consist of independence and comonotonicity as well.

\section{Conclusion}
\label{sectionconclusion}

We have shown that every exchangeable min-id sequence is in one-to-one correspondence with a nnnd infinitely divisible c\`adl\`ag process. Doing so, we have unified the work of \cite{MaiSchererextendibilityMO,MaiSchererexMSMVE,MaiSchenkSchererexchangeableexshock,Sloot2020b,janphd,Maiselfdecomp} under one common umbrella. Furthermore, we have shown that the exponent measure of an exchangeable min-id sequence associated to a driftless nnnd id process is a mixture of product probability measures. Therefore, de Finetti's Theorem is extended to exchangeable exponent measures. Several important examples of exchangeable min-id sequences have been presented and the existing literature has been embedded into our framework. A summary of these correspondences is given in Figures \ref{Venndiagcorrespondencesminididexpmeasurelevymeasure}, \ref{Venndiagprocessvssequence} and \ref{Fig_subfamilies}. As a byproduct we have shown that c\`adl\`ag id-processes can be represented as the sum of arbitrarily many i.i.d.\ c\`adl\`ag processes and that the L\'evy measure of nnnd c\`adl\`ag id-processes is concentrated on nnnd c\`adl\`ag functions.

There are various well known subclasses of nnnd infinitely divisible stochastic processes, such as additive and strong-idt processes. However, it remains an interesting open problem to find nnnd infinitely divisible stochastic processes outside of these subclasses, which can be conveniently described analytically.
Moreover, even if one would have an analytical characterization of such nnnd id processes at hand, their simulation would probably only be feasible in an approximate fashion, similar to the approach discussed in Section \ref{subsecapproxextchronfinlevymeasure}. Nonetheless, even their approximate simulation would most likely still pose a quite challenging problem due to possible path dependencies of the involved stochastic processes. Thus, the simulation of the associated exchangeable min-id sequence remains challenging is such situations.

To circumvent this problem, \cite{brueckexactsim2022} provides a simulation algorithm for the class of real-valued continuous max-id processes, which ensures that a user-specified number of locations of the max-id process are simulated exactly. Since every min-id sequence $\bmX$ may be transformed into a continuous max-id-process $1/\bmX$ with index set $\N$, the results of \cite{brueckexactsim2022} may be applied to simulate an exchangeable min-id sequence $\bmX$. The key ingredient of the proposed simulation algorithm in \cite{brueckexactsim2022} is the exponent measure of $\bmX$ (or $1/\bmX$), which can be easily constructed according to (\ref{eqnconstmugamma}) or deduced from the L\'evy measure of the associated nnnd id-process by Theorem \ref{theoremexponentmeasureisiidmixture}. Thus, the results of this paper allow to construct exchangeable min-id sequences in terms of their associated exponent or L\'evy measures, while \cite{brueckexactsim2022} provides a method for the exact simulation of their $d$-dimensional margins.

\begin{appendix}
\section{Proofs}
\label{appendixproofs}

\subsection{Proof of Corollary \ref{corcontdistrcorresstochcontchron}}
\label{approofcorcontdistrcorresstochcontchron}
\begin{proof}
\textcolor{white}{a}
\begin{itemize}
\item[``$\Leftarrow$'']
Assume that $H$ is stochastically continuous. From the property that $H$ is non-decreasing and c\`adl\`ag with $\lim_{s\nearrow t}H_s= H_t$ in probability we obtain that $\lim_{s\nearrow t}H_s= H_t$ almost surely. This is due to the fact that there is at least one sequence $(s_n)_{n\in\N}$ with $s_n\nearrow t$ such that $\lim_{n\to\infty}H_{s_n}=H_t$ almost surely, since $\lim_{s\nearrow t}H_s= H_t$ in probability. This immediately implies that every $(s^\prime_n)_{n\in\N}$ with $s^\prime_n\nearrow t$ satisfies $\lim_{n\to\infty} H_{s^\prime_n}=H_t$ almost surely due to the non-decreasingness of $H$. Therefore, the Laplace transforms of $\lim_{s\nearrow t}H_s$ and $H_t$ coincide, which implies that 
$$\pp(X_1\geq t) =\lim_{s\nearrow t} \pp(X_1>s)=\lim_{s\nearrow t}\ee\lk \exp(-H_s)\rk=\ee\lk \exp(-H_t)\rk =\pp(X_1>t).$$
Thus, the distribution of $\bmX$ is continuous.

\item[``$\Rightarrow$'']
Assume that $\bmX$ follows a continuous distribution. Seeking a contradiction we assume that $H$ is not stochastically continuous, i.e.\ there exists $t\in\RR$ and $\delta,\epsilon>0$ such that $\lim_{s\nearrow  t}\pp( H_t-H_s$ $>\delta)>\epsilon$. Note that the limit exists, because $H$ is increasing. Now, there exist $0<q_1< q_1+\delta<q_2$ and $\eta>0$ such that $\lim_{s\nearrow  t}\pp(H_s<q_1 , H_t\geq q_2)>\eta$. This implies that 
$$\pp\lc X_1=t\rc \geq \pp(E_1\in (q_1,q_2), \lim_{s\nearrow t}H_s<q_1,H_t\geq q_2)>\eta\pp(E_1\in (q_1,q_2))>0,$$
which is a contradiction. Therefore, $H$ must be stochastically continuous.
\end{itemize}
\end{proof}

\subsection{Proof of Proposition \ref{propositionexchangeabilityexponentmeasure}}
\label{appproofpropositionexchangeabilityexponentmeasure}
\begin{lem}
\label{permutationlemma}
The following properties are valid:
\begin{enumerate}
\item $\pi(\bmx)\in A \Leftrightarrow \bmx\in \pi^{-1}(A)$ and $\pi^{-1}(\bmx)\in A \Leftrightarrow \bmx\in \pi(A)$.
\item $\pi(\pi^{-1}(A))=A$.
\item $\pi(A)=\{\pi(\bmx)\mid \bmx\in A\}=\{\bm y \mid \pi^{-1}(\bm y)\in A\}=\{\bm y\mid  \bm y \in \pi(A)\}$.
\item $\bar{\RR}^d \setminus \pi(A)=\pi(\bar{\RR}^d\setminus A)$ and $\pi(B)\setminus\pi(A) =\pi(B\setminus A)$. 
\item $\pi(\cup_{i\in\N} A_i)=\cup_{i\in\N} \pi(A_i)$.\
\item Let $E_\bell$ be the support of an exchangeable exponent measure. Obviously, $E_\bell=\pi(E_\bell)$ and  $\pi\lc (\bmx,\binfty]^\complement\rc=(\pi(\bmx),-\binfty]^\complement$.
\end{enumerate}
\end{lem}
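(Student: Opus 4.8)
The plan is to deduce the whole lemma from the single fact that the coordinate-permutation map is a bijection of $\bR^d$. For a permutation $\pi$ of $\{1,\dots,d\}$ I would write $T_\pi\colon\bR^d\to\bR^d$, $T_\pi(\bmx):=(x_{\pi(1)},\dots,x_{\pi(d)})$, so that $\pi(A)=T_\pi(A):=\{T_\pi(\bmx)\mid\bmx\in A\}$. The first step is to record the one computational point on which everything rests: for every $\bmx$ and every $i$, the $i$-th coordinate of $T_{\pi^{-1}}\lc T_\pi(\bmx)\rc$ equals $(T_\pi\bmx)_{\pi^{-1}(i)}=x_{\pi(\pi^{-1}(i))}=x_i$, and symmetrically $T_\pi\circ T_{\pi^{-1}}=\mathrm{id}$. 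Hence $T_\pi$ is a bijection with inverse $T_\pi^{-1}=T_{\pi^{-1}}$, so that $\pi^{-1}(A)$ as named in the lemma is exactly the preimage $T_\pi^{-1}(A)$.

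With this in hand, items 1--5 are just the textbook image/preimage identities for a bijection, applied to $T_\pi$. For (1), $T_\pi(\bmx)\in A\iff\bmx\in T_\pi^{-1}(A)=T_{\pi^{-1}}(A)=\pi^{-1}(A)$, and the second equivalence is the same statement with $\pi$ and $\pi^{-1}$ swapped. Item (2) is $T_\pi\lc T_{\pi^{-1}}(A)\rc=(T_\pi\circ T_{\pi^{-1}})(A)=A$. In (3) the first equality is the definition of $\pi(A)$, the last is a tautology, and the middle is (1) read the other way, $T_{\pi^{-1}}(\bmy)\in A\iff\bmy\in T_\pi(A)$. For (4) I would invoke that images under an \emph{injective} map commute with set differences, together with surjectivity $T_\pi(\bR^d)=\bR^d$, to obtain $\bR^d\setminus\pi(A)=\pi\lc\bR^d\setminus A\rc$ and $\pi(B)\setminus\pi(A)=\pi(B\setminus A)$ (no containment between $A$ and $B$ is needed). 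Item (5) holds for images under an arbitrary map, $T_\pi\lc\bigcup_{i\in\N}A_i\rc=\bigcup_{i\in\N}T_\pi(A_i)$, and needs nothing about $\pi$.

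For item (6) two further elementary observations are needed. Since the exponent measure is exchangeable its support parameter is a constant vector $\bell=(\ell,\dots,\ell)$, so $-\binfty$, $\binfty$ and $\bell$ are each fixed by $T_\pi$; as $E^d_\bell=[-\binfty,\bell]\setminus\{\bell\}$ and $T_\pi$ merely relabels coordinates, $T_\pi\lc[-\binfty,\bell]\rc=[-\binfty,\bell]$ and $T_\pi(\{\bell\})=\{\bell\}$, whence $\pi(E^d_\bell)=E^d_\bell$ by (4). For the box statement, $(\bmx,\binfty]=\times_{i=1}^d(x_i,\infty]$ is carried by $T_\pi$ onto $\times_{i=1}^d(x_{\pi(i)},\infty]=(\pi(\bmx),\binfty]$, so $\pi\lc(\bmx,\binfty]^\complement\rc=\pi\lc E^d_\bell\setminus(\bmx,\binfty]\rc=E^d_\bell\setminus(\pi(\bmx),\binfty]=(\pi(\bmx),\binfty]^\complement$, again using (4) and $\pi(E^d_\bell)=E^d_\bell$.

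I do not expect any real obstacle here: the lemma is pure bookkeeping. The only place where a little care is warranted is fixing the convention $T_\pi^{-1}=T_{\pi^{-1}}$ (as opposed to an accidental composition-order mismatch), which the one-line coordinate computation in the first step settles once and for all; after that every item is an instance of a standard set-theoretic identity for images under a bijection.
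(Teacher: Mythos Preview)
Your proposal is correct and follows essentially the same approach as the paper's proof, which dismisses items 1--3 as ``obvious'' or immediate from 1, and gives brief set-builder computations for items 4--6 of exactly the kind you sketch. Your version is simply more explicit in setting up the bijection $T_\pi$ and verifying $T_\pi^{-1}=T_{\pi^{-1}}$, which the paper leaves implicit.
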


\begin{proof}[Proof of Lemma \ref{permutationlemma}]
\hspace{0.5cm}
\begin{itemize}
\item[1.\ + 2.] Obvious.
\item[3.] Follows from 1.
\item[4.] $\bar{\RR}^d \setminus \pi(A)=\{\bmx\mid x\not\in  \pi(A)\}=\{\bmx\mid \pi^{-1}(\bmx)\not\in A\}=\{\bmx\mid \pi^{-1}(\bmx)\in\bar{\RR}^d \setminus  A\}=\pi(\bar{\RR}^d \setminus  A)$. The second assertion follows analogously.
\item [5.] $\pi(\cup_{i\in\N} A_i)=\{\bmx\mid \pi^{-1}(\bmx)\in A_i \text{ for some }i\in\N\}=\{\bmx\mid \bmx\in \pi(A_i) \text{ for some }i\in\N\}$\\
$=\cup_{i\in\N} \pi(A_i)$.
\item[6.] Since $E_\bell$ is of the form $[-\binfty,\bell]\setminus \bell$ with $\bell=(\ell,\ldots,\ell)$ we get $\pi\lc (\bmx,\binfty]^\complement\rc=\pi\lc E_\bell\setminus (\bmx,\binfty]\rc=\{\pi(\bm y)\mid \bm y \in E_\bell , y_i\leq x_i \text{ for some } i\}=\{\bm y\mid  \bm y \in E_\bell , y_i\leq \pi(x_i) \text{ for some } i\}=(\pi(\bmx),\binfty]^\complement$.
\end{itemize}
\end{proof}

\begin{proof}[Proof of Proposition \ref{propositionexchangeabilityexponentmeasure}]
Throughout the proof we frequently use properties of the permutation operator $\pi$ derived in Lemma~\ref{permutationlemma}.
\begin{itemize}
\item[ ``$\Leftarrow$'']
The exchangeability of $\mu_d$ translates into the exchangeability of the survival function $\fb$ of $\bmX_d$, which in turn implies that $\bmX_d$ is exchangeable. We provide the precise reasoning behind this argument, since we need to carry out the same steps for finite exponent measures in  ``$\Rightarrow$''. For ease of notation we write $\pp(\bmX_d\in A)=:\pp(A)$. 

Let 
$$\ac:=\big\{A\in \bc\lc(-\infty,\infty]^d\rc \mid \pp(\pi(A))=\pp(A) \ \forall \text{ permutations } \pi \text{ on } \{1,\ldots,d \} \big\}$$
denote the collection of $\pp$-exchangeable sets. We show that $\ac$ is a Dynkin system containing a $\Pi$-stable generator of $\bc( (-\infty,\infty]^d)$, which implies that $\bc( (-\infty,\infty]^d)\subset \ac$. Obviously, the sets $(-\infty,\infty]^d$ and $\emptyset$ are both included in $\ac$. Now consider some arbitrary set $A\in\ac$. We get
\begin{align}
\pp\lc (-\infty,\infty]^d\setminus A\rc&=1-\pp(A)=1-\pp(\pi(A))=\pp\lc(-\infty,\infty]^d\setminus \pi(A)\rc \nonumber \\
&= \pp\lc\pi((-\infty,\infty]^d\setminus A)\rc. \label{eqnexchexpm}
\end{align}
Therefore, $(-\infty,\infty]^d\setminus A\in\ac$. Next, consider $(A_i)_{i\in\N}\in \ac$ with $A_i\cap A_j=\emptyset$ for $i\neq j$. Using that every measure is continuous from below we obtain
\begin{align*}
\pp\lc\cup_{i\in\N} A_i\rc&=\lim_{n\to\infty} \pp\lc\cup_{i=1}^n A_i\rc=\lim_{n\to\infty}\pp\lc\cup_{i=1}^n \pi(A_i)\rc \\
&=\pp\lc\cup_{i\in\N} \pi(A_i)\rc=\pp\lc\pi\lc\cup_{i\in\N} A_i\rc\rc,
\end{align*}
which establishes that $\ac$ is a Dynkin system. From the exchangeability of $\mu_d$ we deduce that
\begin{align*}
\pp\big( (\bmx,\binfty]\big)&=\fb(\bmx)=\exp\lc-\mu\lc(\bmx,\binfty]^\complement\rc\rc =\exp\lc-\mu\lc\pi\lc(\bmx,\binfty]^\complement\rc\rc\rc \\
&=\exp\lc-\mu\lc(\pi(\bmx),\binfty]^\complement\rc\rc =\fb(\pi(\bmx))=\pp\big((\pi(\bmx),\infty]\big)\\
&=\pp\big( \pi((\bmx,\binfty])\big).
\end{align*}
Therefore, $\ac$ contains a $\Pi$-stable generator of $\bc( (-\infty,\infty]^d)$. This implies that $\pp(A)=\pp(\pi(A))$ for all $A\in \bc( (-\infty,\infty]^d)$ and all permutations $\pi$ on $\{1,\ldots,d\}$, which proves that $\fb$ is the survival function of an exchangeable random vector.

\item[``$\Rightarrow$''] 
If $\mu_d$ is a finite measure, the proof is simply a repetition of the arguments in ``$\Leftarrow$'' to show that 
$$\ac:=\big\{A\in \bc\lc E_\binfty^d \rc \mid \mu_d(\pi(A))=\mu_d(A) \ \forall \text{ permutations } \pi \text{ on } \{1,\ldots,d \} \big\}$$
is a Dynkin system containing a $\Pi$-stable generator of $\bc\lc E_\binfty^d \rc$, since
\begin{align*}
\mu\lc(\bmx,\binfty]^\complement\rc&=-\log\lc \pp\big( (\bmx,\binfty]\big) \rc=-\log\lc \pp\big( \pi((\bmx,\binfty])\big) \rc\\
&=\mu\lc(\pi(\bmx),\binfty]^\complement\rc=\mu\lc\pi\lc(\bmx,\binfty]^\complement\rc\rc. 
\end{align*}
Therefore, we may focus on the case $\mu_d(E^d_\binfty)=\infty$. In this case the sets with infinite measure have non-empty intersection with an open neighborhood of $\binfty$ implying $\mu_d$ to be $\sigma$-finite. Note that a similar reasoning as in (\ref{eqnexchexpm}) cannot be applied here since $\mu_d\lc E^d_\binfty\rc=\infty$, which is why we have to resort to a different reasoning to prove that $\mu_d$ is exchangeable. Instead, our goal is to show that $\mu_d$ is the pointwise limit of a sequence of exchangeable finite measures. The following paragraphs are dedicated to the proof of this statement.

Let $(c_n)_{n\in\N}\in\RR$, $c_n<\infty$ with $c_n\nearrow \infty$. Define 
$$\mu^{(n)}(\cdot):=\mu_d\lc \cdot \cap \big\{(c_n,\infty]^d\big\}^\complement\rc.$$
We claim that $\mu^{(n)}$ is exchangeable and that $\mu_d=\lim_{n\to\infty} \mu^{(n)}$. Note, that $\mu^{(n)}$ is an increasing sequence of measures with $\mu^{(n)}(E^d_\binfty)=\mu_d\lc\{(c_n,\infty]^d\}^\complement\rc<\infty$. An application of \cite[Chapter 10, Theorem 1a)]{doob2012measure} yields that $\tilde{\mu}:=\lim_{n\to\infty} \mu^{(n)}$ is a measure.  The construction of $\mu^{(n)}$ and the continuity from below of any measure show that 
$$\tilde{\mu}(A)=\lim_{n\to\infty}\mu_d\lc A\cap \big\{(c_n,\infty]^d\big\}^\complement\rc=\mu_d\lc \bigcup_{n\in\N} \bigg\{A\cap \big\{(c_n,\infty]^d\big\}^\complement\bigg\}\rc=\mu_d\lc A \rc$$
for every $A\in \bc(E^d_\binfty)$, since $\binfty\not\in E^d_\binfty$.
Therefore $\mu_d=\tilde{\mu}=\lim_{n\to\infty} \mu^{(n)}$ is a pointwise limit of measures.

Next, we show that each $\mu^{(n)}$ is exchangeable. Consider the collection of $\mu^{(n)}$-exchangeable sets 
$$\ac_{\mu^{(n)}}:=\big \{A\in \bc(E^d_\binfty)\mid \mu^{(n)}(\pi(A))=\mu^{(n)}(A) \ \forall \text{ permutations }\pi \text{ on } \{1,\ldots,d\}\big\}.$$ Similar to ``$\Leftarrow$'', we can show that $\ac_{\mu^{(n)}}$ contains $E^d_\binfty$, $\emptyset$ and countable unions of pairwise disjoint sets from $\ac_{\mu^{(n)}}$. Moreover, for any $A\in\ac_{\mu^{(n)}}$, we have
\begin{align*}
\mu^{(n)}(E^d_\binfty\setminus A)&=\mu^{(n)}(E^d_\binfty)-\mu^{(n)}(A) =\mu^{(n)}(E^d_\binfty)-\mu^{(n)}(\pi(A))\\
&=\mu^{(n)}(E^d_\binfty\setminus \pi(A))=\mu^{(n)}(\pi(E^d_\binfty\setminus A)),
\end{align*}
since $\mu^{(n)}(A)<\infty$. Thus, $\ac_{\mu^{(n)}}$ is a Dynkin system. 
Moreover, for every $\bm a \in\RR^d$ with $\bm a\leq c_n$, we have 
\begin{align*}
    \mu^{(n)}\lc (\bm a,\binfty]^\complement\rc& =\mu\lc (\bm a,\infty]^\complement\rc=\mu\lc (\pi(\bm a),\infty]^\complement\rc\\
    &=\mu^{(n)}\lc (\pi(\bm a),\binfty]^\complement\rc =\mu^{(n)}\lc \pi\lc (\bm a,\binfty]^\complement\rc\rc 
\end{align*}
by the exchangeability of $\bmX_d$. One can invoke a similar argument for all $\bm a\in\RR^d$ which do not satisfy $\bm a\leq c_n$ to obtain $\mu^{(n)}\lc (\bm a,\binfty]^\complement\rc =\mu^{(n)}\lc \pi\lc (\bm a,\binfty]^\complement\rc\rc $.
An inclusion-exclusion principle argument yields that $A_{\mu^{(n)}}$ contains a $\Pi$-stable generator of $\bc(E^d_{\binfty})$. Therefore, $\mu^{(n)}$ is exchangeable. 

Combining the arguments above we have 
$$\mu_d(A)=\nlim \mu^{(n)}( A)=\nlim \mu^{(n)}(\pi(A))=\mu_d(\pi(A)),$$
for any $A\in\bc(E^d_\binfty)$, which shows that $\mu_d$ is exchangeable.
\end{itemize}
\end{proof}

\subsection{Proof of Corollary \ref{cortaildependence}}
\begin{proof}
We calculate
\begin{align*}
    \pp(X_1>t\mid X_2>t,\ldots ,X_{d^\prime}>t)&=\frac{\pp(X_1>t, X_2>t, \ldots ,X_{d^\prime}>t)}{\pp(X_2>t, \ldots ,X_{d^\prime}>t)}\\
    &=\frac{\exp\lc -\mu_{d^\prime} \lc  (\bm t,\binfty]^\complement\rc\rc} {\exp\lc -\mu_{{d^\prime}-1} \lc  (\bm t,\binfty]^\complement\rc\rc}\\
    &=\frac{  \exp\big(-\mu_{d^\prime} \big(  (\bm t,\binfty]^\complement\big)\big)  }{ \exp\lc -\mu_{{d^\prime}} \lc [-\infty,\infty] \times\lc ( t,\infty]^{d^\prime-1}\rc^\complement\rc\rc}\\
    &= \exp\lc- \mu_{d^\prime}\lc [-\infty, t]\times  (t,\infty]^{{d^\prime}-1}\rc\rc
\end{align*}
Now, $\rho^u_{d^\prime}$ is obtained by letting $t$ tend to $\infty$. The relation $\rho^u_{d^\prime+1}\geq \rho^u_{d^\prime}$ is obvious by Proposition \ref{propositionextendibleexponentmeasure} and
\begin{align*}
   & \exp\lc- \mu_{d^\prime}\lc [-\infty, t]\times  (t,\infty]^{{d^\prime}-1}\rc\rc\\
   &= \exp\lc- \mu_{d}\lc [-\infty, t]\times  (t,\infty]^{{d^\prime}-1} \times \minf^{d-d^\prime}\rc\rc.
\end{align*}
\end{proof}

\subsection{Proof of Lemma \ref{lemcadlagsums}}
\label{appprooflemcadlagsums}
\begin{proof}

Fix $m\in\N$. We know that $H\sim \sum_{i=1}^{m} H^{(i,1/m)}$ for some i.i.d.\ processes $H^{(i,1/m)}\in\bR^\RR$. 
W.l.o.g.\ assume that $H$ is defined on a probability space $(\Omega,\mathcal{F},\pp)$ and $\lc H^{(i,1/m)}\rc_{1\leq i\leq m}$ is a random element in $\lc \lc\bR^\RR\rc^{m},\tilde{\mathcal{F}},\tilde{\pp}\rc$, where $\tilde{\mathcal{F}}$ denotes the product $\sigma$-algebra generated by the finite dimensional projections. The idea of the proof is as follows: \\
Prove that the c\`adl\`ag property of $H$ transfers to $H^{(i,1/m)}$ if we restrict the processes to rational time indices and define i.i.d.\ c\`adl\`ag processes $\tilde{H}^{(i,1/m)}$ as rational time limits of $H^{(i,1/m)}$.

W.l.o.g.\ we can choose the metric $\bar{\tau}(x,y)=\vert \arctan{(x)}-\arctan{(y)} \vert$ to define distances (and thus continuity) on $\bR$. Denote the (measurable) set of $\QQ$-right-continuous paths of $H^{(i,1/m)}$ as 
$$A_{rc,i}:=\{w \mid w_i \text{ is right-continuous as a function on } \QQ\}.$$
We have
\begin{align*}
\tilde{\pp}\lc A_{rc,i}\rc&=\tilde{\pp}\lc \bigcap_{q\in\QQ}\bigcap_{\substack{\epsilon>0 \\ \epsilon\in\QQ}}\bigcup_{\substack{\delta>0 \\ \delta\in\QQ}}\bigcap_{\substack{q_1\in(q,q+\delta) \\ q_1\in\QQ}}\bigg\{\omega \ \bigg\vert\  \bar{\tau} \lc \omega_i(q_1),\omega_i(q)\rc <\epsilon\bigg\} \rc\overset{(\star)}{=}1.
\end{align*}
It remains to prove $(\star)$. Therefore, assume that $(\star)$ does not hold. In this case there exists $\bar{q}\in\QQ$ such that $H^{(i,1/m)}$ is not right-continuous at $\bar{q}$ with positive probability, i.e.\ there exists $\bar{q}\in\QQ$ and $\epsilon>0$ such that
$$\tilde{\pp}\lc \bigcap_{\substack{\delta>0 \\ \delta\in\QQ}}\bigcup_{\substack{q_1\in(\bar{q},\bar{q}+\delta) \\ q_1\in\QQ}} \bigg\{\omega \ \bigg\vert\  \bar{\tau} \lc \omega_i(q_1),\omega_i(\bar{q})\rc  >\epsilon\bigg\}\rc>0, $$
which is equivalent to 
$$\tilde{\pp}\lc \bigcup_{\substack{(q_n)_{n\in\N}\in\QQ^\N \\ q_n>\bar{q}\\ \lim_{n\to\infty} q_n=\bar{q} }} \bigcap_{N\in\N} \bigcup_{n\geq N} \bigg\{\omega \ \bigg\vert\  \bar{\tau} \lc \omega_i(q_n),\omega_i(\bar{q})\rc >\epsilon\bigg\}\rc>0. $$
Every fixed sequence $(q_n)_{n\in\N}$ such that for all $N\in\N$ there exists some $n\geq N$ with $\bar{\tau} ( \omega_i(q_n),$ $\omega_i(\bar{q}))>\epsilon$ satisfies one of the following $4$ cases:
\begin{enumerate}
    \item $\vert \omega_i(\bar{q})\vert <\infty$ and there exists some $\bar{\epsilon}>0$ such that $\omega_i(q_n)-\omega_i(\bar{q})>\bar{\epsilon}$ for infinitely many $n$ or
    \item $\vert \omega_i(\bar{q})\vert <\infty$ and there exists some $\bar{\epsilon}>0$ such that $\omega_i(q_n)-\omega_i(\bar{q})<-\bar{\epsilon}$ for infinitely many $n$ or
    \item $ \omega_i(\bar{q})=\infty$ and $\omega_i(q_n)<C$ for infinitely many $n$ and some constant $C\in\RR$ or
    \item $ \omega_i(\bar{q})=-\infty$ and $\omega_i(q_n)>C$ for infinitely many $n$ and some constant $C\in\RR$.
\end{enumerate}
Thus, there exists an $\bar{\epsilon}>0$ or $C>0$ such that either
\begin{enumerate}
    \item $$\tilde{\pp}\lc \bigcup_{\substack{(q_n)_{n\in\N}\in\QQ^\N \\ q_n>\bar{q}\\ \lim_{n\to\infty} q_n=\bar{q} }} \bigcap_{N\in\N} \bigcup_{n\geq N} \bigg\{\omega\ \bigg\vert\  \omega_i(q_n)-\omega_i(\bar{q})>\bar{\epsilon} , \ \vert \omega_i(\bar{q})\vert <\infty\bigg\}\rc>0 \text { or}$$
    \item $$\tilde{\pp}\lc \bigcup_{\substack{(q_n)_{n\in\N}\in\QQ^\N \\ q_n>\bar{q}\\ \lim_{n\to\infty} q_n=\bar{q} }} \bigcap_{N\in\N} \bigcup_{n\geq N} \bigg\{\omega \ \bigg\vert\   \omega_i(q_n)-\omega_i(\bar{q}) <-\bar{\epsilon}, \ \vert \omega_i(\bar{q})\vert <\infty  \bigg\}\rc>0 \text { or} $$
    \item $$\tilde{\pp}\lc \bigcup_{\substack{(q_n)_{n\in\N}\in\QQ^\N \\ q_n>\bar{q}\\ \lim_{n\to\infty} q_n=\bar{q} }} \bigcap_{N\in\N} \bigcup_{n\geq N} \bigg\{\omega \ \bigg\vert\   \omega_i(q_n) <C,\ \omega_i(\bar{q})=\infty \bigg\}\rc>0 \text { or} $$
    \item $$\tilde{\pp}\lc \bigcup_{\substack{(q_n)_{n\in\N}\in\QQ^\N \\ q_n>\bar{q}\\ \lim_{n\to\infty} q_n=\bar{q} }} \bigcap_{N\in\N} \bigcup_{n\geq N} \bigg\{\omega \ \bigg\vert\   \omega_i(q_n) >C , \ \omega_i(\bar{q})=-\infty  \bigg\}\rc>0 . $$
\end{enumerate}
W.l.o.g.\ we assume that the first assertion holds, since the other cases are treated similarly. The assertion implies that there exists a (fixed) sequence $(\bar{q}_n)_{n\in\N}$ with $\lim_{n\to\infty}\bar{q}_n=\bar{q}$ such that 
$$\tilde{\pp}\lc  \bigcap_{N\in\N} \bigcup_{n\geq N} \bigg\{\omega \ \bigg\vert\   \omega_i(\bar{q}_n)-\omega_i(\bar{q}) >\bar{\epsilon},\ \vert \omega_i(\bar{q})\vert <\infty\bigg\}\rc>0.$$
Since the $\omega_i$ are i.i.d.\ we obtain that
\begin{align*}
0&= \pp\lc \bigcap_{N\in\N} \bigcup_{n\geq N} \bigg\{H_{\bar{q}_n}-H_{\bar{q}} >m\bar{\epsilon},\ \vert H_{\bar{q}}\vert <\infty \bigg\}\rc\\
&\geq\tilde{\pp}\lc \bigcap_{N\in\N} \bigcup_{n\geq N} \bigg\{ \omega \ \bigg\vert \ \omega_1(\bar{q}_n)-\omega_1(\bar{q}) >\bar{\epsilon},\ \vert \omega_1(\bar{q})\vert <\infty\bigg\}\rc^{m} >0,
\end{align*}
which is a contradiction. Therefore, $(\star)$ is valid and $A_{rc}:=\cap_{i=1}^m A_{rc,i}$ satisfies $\tilde{\pp}\lc A_{rc}\rc=1 $. 

Next, define a finally one-sided Cauchy sequence as a Cauchy sequence $(q_n)_{n\in\N}$ for which there exists some $N\in\N$ such that $q_n>\lim_{n\to\infty} q_n$ or $q_n<\lim_{n\to\infty} q_n$ for all $n\geq N$. Denote the set of existing left and right limits of $w_i$ for finally one-sided $\QQ$-Cauchy sequences as
\begin{align*}
A_{fC,i} &:=\big\{ \omega\ \big\vert\ \big( \omega_i(q_n)\big)_{n\in\N} \text{ is a Cauchy sequence (w.r.t.\ }\bar{\tau}\text{)}\\
&\ \ \ \ \ \ \ \ \ \ \ \ \ \text{ for all finally one-sided Cauchy sequences } \lc q_n\rc_{n\in\N}\in \QQ^\N \big\}\\
        &=\bigcap_{\substack{(q_n)\in\QQ^\N \\ \text{finally one-} \\ \text {sided Cauchy}}}\bigcap_{\substack{\epsilon>0 \\ \epsilon\in\QQ}} \bigcup_{N\in\N} \bigcap_{m,n\geq N} \big\{ \omega\big\vert \ \bar{\tau}\lc \omega_i(q_n),\omega_i(q_m)\rc <\epsilon \big\} .
\end{align*}
Note that we explicitly allow for rational Cauchy sequences with irrational limit. Similar to the proof of $\pp\lc A_{rc,i}\rc=1$ we can show that $\pp\lc A_{fC,i}\rc=1$, which implies that $A_{fC}:=\cap_{i=1}^m A_{fC,i}$ satisfies $\tilde{\pp}\lc A_{fC}\rc=1$. 

Finally, define
\begin{align}
\label{eqndefcadlagversion}
    \tilde{H}_t^{(i,1/m)}(\omega):=\lim_{\substack{q\to t\\ q\in\QQ\\ q> t}}\omega_i(q)\id_{\{A_{rc}\}}(\omega)\id_{\{A_{fC}\}}(\omega),
\end{align}
which is measurable as the pointwise limit of measurable functions, if we can show that the limit exists and is independent of the chosen sequence. Therefore, choose two sequences $\lc q^{(1)}_n \rc_{n\in\N}, \lc q^{(2)}_n \rc_{n\in\N}\in \lc \QQ\cap(t,\infty)\rc^\N$ with limit $t\in\RR$. 
$ q^{(1)}_n,q^{(2)}_n>t$ for all $n\in\N$ implies that both sequences are finally one-sided Cauchy sequences. Therefore, both limits $\lim_{n\to\infty}\omega_i\lc q^{(1)}_n\rc \id_{\{A_{rc}\}}(\omega)\id_{\{A_{fC}\}}(\omega)$ and $\lim_{n\to\infty}\omega_i\lc q^{(2)}_n\rc \id_{\{A_{rc}\}}(\omega)\id_{\{A_{fC}\}}(\omega)$ exist. Moreover the combined sequence 
$$\lc \tilde{q}_n\rc_{n\in\N}:=\lc \begin{cases} q^{(1)}_n & n \text{ even} \\ q^{(2)}_n & n \text{ odd} \end{cases}\rc_{n\in\N}$$ is a finally one-sided Cauchy sequence. Therefore, the limit 
$$\lim_{n\to\infty}\omega_i(\tilde{q}_n)\id_{\{A_{rc}\}}(\omega)\id_{\{A_{fC}\}}(\omega)$$
exists as well and 
$$\lim_{n\to\infty}\omega_i\lc q^{(1)}_n\rc\id_{\{A_{rc}\}}(\omega)\id_{\{A_{fC}\}}(\omega)=\lim_{n\to\infty}\omega_i\lc q^{(2)}_n\rc\id_{\{A_{rc}\}}(\omega)\id_{\{A_{fC}\}}(\omega).$$
Thus, the limit in Equation (\ref{eqndefcadlagversion}) exists and is independent of the chosen sequences, which implies that $\lc \tilde{H}^{(i,1/m)}\rc_{1\leq i\leq m}$ define valid stochastic processes. Observe that 
\begin{align}
    \tilde{\pp}\lc \tilde{H}^{(i,1/m)}_q(\omega) =w_i(q) =H^{(i,1/m)}_q(\omega)\ \text{ for all } q\in\QQ\rc=1. \label{eqncadlagsumsonQ}
\end{align}
Thus, $\tilde{H}^{(i,1/m)}$ and $H^{(i,1/m)}$ almost surely coincide on $\QQ$, which follows from the fact that $\omega_i$ is only considered to be non-zero on $\QQ$-right-continuous paths. We claim that $\tilde{H}^{(i,1/m)}\in D^\infty(\RR)$.

Firstly, we prove that $\tilde{H}^{(i,1/m)}(\omega)$ is right-continuous for all $\omega\in\Omega$ and $1\leq i\leq m$. To this purpose choose some strictly decreasing sequence $(t_n)_{n\in\N}\in\RR^\N$ with $\lim_{n\to\infty} t_n=t$ and let $\epsilon>0$ be arbitrary. Choose $t_n<q_n=q_n(\epsilon,\omega)\in\QQ$ such that $\bar{\tau}\lc  \tilde{H}^{(i,1/m)}_{t_n},\tilde{H}^{(i,1/m)}_{q_n}\rc <\epsilon$ and $\vert t_n-q_n\vert <1/n$, which is possible since $\tilde{H}^{(i,1/m)}_{t_n}$ is defined as the limit of a rational time evaluations of $H^{(i,1/m)}$. Since $(q_n)_{n\in\N}$ is a finally one-sided Cauchy sequence with limit $t$ there exists some $N(\omega,\epsilon)\in\N$ such that for all $n\geq N$ we have that $\bar{\tau}\lc \tilde{H}^{(i,1/m)}_{q_n}, \tilde{H}^{(i,1/m)}_t\rc <\epsilon$. Therefore, for $n\geq N$, we obtain
$$\bar{\tau}\lc \tilde{H}^{(i,1/m)}_t, \tilde{H}^{(i,1/m)}_{t_n}\rc \leq \bar{\tau}\lc \tilde{H}^{(i,1/m)}_t, \tilde{H}^{(i,1/m)}_{q_n}\rc+ \bar{\tau}\lc \tilde{H}^{(i,1/m)}_{q_n}, \tilde{H}^{(i,1/m)}_{t_n}\rc <2\epsilon, $$
which yields that $\lim_{n\to\infty}\tilde{H}^{(i,1/m)}_{t_n}=\tilde{H}^{(i,1/m)}_t$. Thus, $\tilde{H}^{(i,1/m)}$ is right-continuous.

Secondly, we show that $\tilde{H}^{(i,1/m)}(\omega)$ has left limits for all $\omega\in\Omega$ and $1\leq i\leq m$.
To see this choose some arbitrary $t\in\RR$, $\epsilon>0$ and a sequence $(t_n)_{n\in\N}\in(-\infty,t)^\N$ with limit $t$. Define $q_n (\epsilon,\omega)$ as some rational number in $[t_n,t)$ such that $\bar{\tau}\lc \tilde{H}^{(i,1/m)}_{t_n},\tilde{H}^{(i,1/m)}_{q_n}\rc<\epsilon$, which is possible since $\tilde{H}^{(i,1/m)}$ is right-continuous. If $n,m$ are large enough such that $t_n$ and $t_m$ are close to $t$ we have that $q_n$ and $q_m$ are also close to $t$. Therefore, $(q_n)_{n\in\N}$ is a finally one-sided Cauchy sequence. Thus, we can find an $N(\omega,\epsilon)\in\N$ such that for all $m,n\geq N$ we have $\bar{\tau}\lc \tilde{H}^{(i,1/m)}_{q_n},\tilde{H}^{(i,1/m)}_{q_m}\rc<\epsilon$, which implies
\begin{align*}
    \bar{\tau}\lc \tilde{H}^{(i,1/m)}_{t_n},\tilde{H}^{(i,1/m)}_{t_m}\rc &\leq \bar{\tau}\lc \tilde{H}^{(i,1/m)}_{t_n},\tilde{H}^{(i,1/m)}_{q_n}\rc + \bar{\tau}\lc \tilde{H}^{(i,1/m)}_{t_m},\tilde{H}^{(i,1/m)}_{q_m}\rc\\
    &+\bar{\tau}\lc\tilde{H}^{(i,1/m)}_{q_n},\tilde{H}^{(i,1/m)}_{q_m}\rc\\
    &<3\epsilon.
\end{align*} 
Since $\epsilon>0$ was arbitrary we have shown that $\tilde{H}^{(i,1/m)}$ has left limits for every $\omega$.

Obviously, $\lc \tilde{H}^{(1,1/m)}\rc_{1\leq i\leq m}$ are i.i.d.\ as almost sure pointwise limits of i.i.d.\ objects. It remains to prove that $H^{(1,1/m)}\sim \tilde{H}^{(1,1/m)}$. The characteristic functional of $H^{(1,1/m)}$, denoted as $CF_{H^{(1,1/m)}}(\bm z,\bm t)$ and the characteristic functional of $\tilde{H}^{(1,1/m)}$, denoted as $CF_{\tilde{H}^{(1,1/m)}}(\bm z,\bm t)$, coincide for $\bm z\in\RR^d $ and $\bm t\in\QQ^d$ by Equation (\ref{eqncadlagsumsonQ}). For arbitrary $\bm z\in\RR^d$ and $\bm t\in\RR^d$ let $CF_H(\bm z,\bm t)$ denote the characteristic functional of $H$. We use the fact that $H$ is right-continuous to obtain
\begin{align*}
&CF_{H^{(1,1/m)}}(\bm z,\bm t)=CF_H(\bm z,\bm t)^{\frac{1}{m}}=\lim_{\substack{\bm s\searrow t\\ \bm s\in \QQ^d\\ \bm s>\bm t}} CF_H(\bm z,\bm s)^{\frac{1}{m}}=\lim_{\substack{\bm s\searrow t\\ \bm s\in \QQ^d\\ \bm s>\bm t}} CF_{H^{(1,1/m)}}(\bm z,\bm s) \\
&=\lim_{\substack{\bm s\searrow t\\ \bm s\in \QQ^d\\ \bm s>\bm t}}\ee_{\tilde{\pp}}\lk \exp\lc \sum_{j=1}^d  \mathrm{i}z_j H^{(1,1/m)}_{s_j}\rc\rk 
=\lim_{\substack{\bm s\searrow t\\ \bm s\in \QQ^d\\ \bm s>\bm t}}\ee_{\tilde{\pp}}\lk \exp\lc \sum_{j=1}^d  \mathrm{i}z_j\tilde{H}^{(1,1/m)}_{s_j}\rc\rk\\ &=CF_{\tilde{H}^{(1,1/m)}}(\bm z,\bm t),
\end{align*}
where the second to last equality uses that $\tilde{H}^{(1,1/m)}$ and $H^{(1,1/m)}$ almost surely coincide on $\QQ$ and the last equality uses the fact that $\tilde{H}^{(1,1/m)}$ is right-continuous. This proves that $H^{(1,1/m)}\sim \tilde{H}^{(1,1/m)}$.
\end{proof}

\subsection{Proof of Corollary \ref{corcadlagsums}}
\label{appproofcorcadlagsums}
\begin{proof}
We use the same notation as in the proof of Lemma \ref{lemcadlagsums}. Denote the set of $\QQ$-non-negative paths as 
$$A_{\geq 0}:=\big\{ \omega\mid \omega_i(q)\geq 0 \text{ for all } q\in\QQ \text{ and } 1\leq i\leq m\big\}.$$
Furthermore, denote the set of $\QQ$-non-decreasing paths of as 
$$A_{\nearrow}:=\{ \omega \mid \omega_i(\cdot) \text{ is non-decreasing on } \QQ\text{ for all }1\leq i\leq m \}.$$
By similar arguments as in the proof of Lemma \ref{lemcadlagsums} we obtain that
\begin{align*}
&\tilde{\pp}(A_{\nearrow})=\tilde{\pp}\lc \bigcap_{1\leq i\leq m} \bigcap_{\substack{ q_1,q_2\in\QQ \\ q_1\leq q_2}} \big\{ \omega \mid \omega_i(q_1)\leq \omega_i(q_2) \big\}\rc=1 \text{ and }\\
& \tilde{\pp}\lc A_{\geq 0}\rc=\tilde{\pp}\lc \bigcap_{1\leq i\leq m} \bigcap_{\substack{ q\in\QQ }} \big\{ \omega \mid \omega_i(q)\geq 0 \big\}\rc=1,
\end{align*}
since intersections of countably many sets with probability $1$ have probability $1$.
Next, define
\begin{align}
    \hat{H}^{(i,1/m)}_t(\omega):=\tilde{H}^{(i,1/m)}_t(\omega) \id_{\{A_{\nearrow}\}}(\omega)\id_{\{A_{\geq 0}\}}(\omega).
\end{align}
Obviously, $\hat{H}^{(i,1/m)}$ is non-negative and non-decreasing. Similar to the proof of Lemma \ref{lemcadlagsums} we can show that the Laplace transforms of $\hat{H}^{(i,1/m)}$ and $H^{(i,1/m)}$ coincide and the claim follows.
\end{proof}

\subsection{Proof of Proposition \ref{propositionltchronometer}}
\begin{proof}
Let $\tilde{b}$ and $\tilde{\nu}$ denote the drift and L\'evy measure of $H$ given by \cite[Theorem 2.8]{rosinskiinfdivproc}. Note that $\tilde{\nu}$ is a measure on $\bR^\RR$ equipped with the $\sigma$-algebra generated by the finite dimensional projections, since \cite{rosinskiinfdivproc} views id-processes as processes in $\RR^\RR$.
There are two things that need to be shown:
\begin{enumerate}
\item $\tilde{\nu}$ can be restricted to a measure on $M^0_\infty$.
\item The integral over $\tilde{\nu}$ in Equation (\ref{laplaceexponentidprocess}) can be defined without the compensating term $\sum_{i=1}^d z_i x(t_i)$ $\id_{\{\vert x(t_i)\vert <\epsilon\}}$ and $b\in M^0_\infty \cap D(\RR)$.
\end{enumerate}

We start with the first statement. Similar to the proof of \cite[Theorem 3.4]{rosinskiinfdivproc} we can show that there exists an exact representation $\nu$ of $\tilde{\nu}$ defined 
on $$C_0:=\bigg\{x\in D^\infty(\RR)\ \big\vert\ \lim_{t\to-\infty} x(t)=0\bigg\},$$ since $C_0$ is an algebraic group under addition and a standard Borel space as a measurable subset of a standard Borel space. For additional information on exact representations of L\'evy measures, see \cite[Definition 2.20]{rosinskiinfdivproc}. For our purposes it suffices to view an exact representation of a L\'evy measure as a restriction of a L\'evy measure to a smaller domain. To prove the first statement it suffices to show that $\nu$ vanishes on the set
\begin{align*}
C_1:=&\{ x\in D^\infty(\RR) \mid x \text{ is non-decreasing}\}^\complement, 
\end{align*}
since $C_1$ 
is measurable (in $D^\infty(\RR)$). In particular, $M^0_\infty=C_0 \cap C_1^\complement$. \\
For $I=(t_1,\ldots,t_d)\in\RR^d$ (w.l.o.g.\ $ t_1\leq \ldots \leq t_d$) and $A\in\bc\lc\bR^d\rc$ define
$$\nu_I(A):=\nu\lc\{ x\in D^\infty(\RR)\mid (x(t_1),\ldots,x(t_d))\in A\}\rc.$$
$\tilde{\nu}_I(A)$ is defined analogously. Observe that $\nu_I(A)=\tilde{\nu}_I(A)$ for all $A\in\bc\lc \bR^d\rc $ by construction. We show that
$$\nu(C_1)=  \nu\lc  \{ x\in D^\infty(\RR) \mid x \text{ is non-decreasing}\}^\complement  \rc=0.$$
\cite[Proposition 6.1]{BarndorffNielsen2006InfiniteDF} tells us that the L\'evy measure $\tilde{\nu}_{(t_1,\ldots,t_d)}$ of $(H_{t_1},\ldots ,H_{t_d})$ is concentrated on the cone $K_d:=\{ \bmx\in \bR^d \mid 0\leq x_1 \leq \ldots\leq x_d\}$, which implies that $\nu_{(t_1,\ldots,t_d)}$ is also concentrated on $K_d$. Now, assume that $\nu(C_1)=\nu\big( \{ x\in D^\infty(\RR) \mid x \text{ is non-decreasing}\}^\complement \big)>0$. Observe that
\begin{align*}
\{x\in D^\infty(\RR) \mid x \text{ is non-decreasing}\}^\complement =\bigcup_{\substack{t_1,t_2\in\QQ \\t_1<t_2}} \{x\in D^\infty(\RR)\mid x(t_1)>x(t_2)\}.
\end{align*} 
Therefore, there exist $\bar{t}_1<\bar{t}_2$ with $\nu\big( \{x\in D^\infty(\RR)\mid x(\bar{t}_1)>x(\bar{t}_2)\}\big)>0$. By the construction of $\nu$ we get 
\begin{align*}
&\nu\big( \{x\in D^\infty(\RR)\mid x(\bar{t}_1)>x(\bar{t}_2)\}\big)=\nu_{(\bar{t}_1,\bar{t}_2)}\big( \cup_{s\in\QQ} (s,\infty]\times [-\infty,s] \big)\\
&=\tilde{\nu}_{(\bar{t}_1,\bar{t}_2)}\big( \cup_{s\in\QQ} (s,\infty]\times [-\infty,s] \big)=0,
\end{align*} 
which is a contradiction. Therefore, $\nu(C_1)=\nu\big( \{ x\in D^\infty(\RR) \mid x \text{ is non-decreasing}\}^\complement \big)=0$ and $\nu$ is concentrated on non-decreasing functions which satisfy $\lim_{t\to-\infty}x(t)=0$. 
Thus, $\nu$ is concentrated on $M^0_\infty= C_0 \cap C_1^\complement$
, i.e.\ 
$$\nu(A)=\nu (A\cap M^0_\infty ),\ \ \forall A\in \bc(D^\infty(\RR)).$$

Let us turn to the proof of the second statement.
For every $d\in\N$ and $\bm t\in\RR^d$ we obtain a non-negative drift vector $(b(t_1),\ldots, b(t_d))$ from the $d$-dimensional L\'evy--Khintchine triplet of the non-negative random vector $(H_{t_1},\ldots,H_{t_d})$ with truncation function $0$.  Condition $(\Diamond^\prime 2)$ and \cite[Proposition 6.1]{BarndorffNielsen2006InfiniteDF} imply the existence of a unique non-decreasing finite drift $b:\RR \to [0,\infty)$. Right-continuity and $\lim_{t\to-\infty} b(t)=0$ follow from the right-continuity of $H$ and $\lim_{t\to-\infty} H_t=0$. Thus, $b\in M^0_\infty\cap D(\RR)$.

Combining the above yields  
$$\ee\lk \exp\lc -\sumd z_i H_{t_i}\rc\rk=\exp\lc -\sumd z_i b(t_i)+ \int_{M^0_\infty} \lc \exp\lc-\sumd z_i x(t_i) \rc -1\rc \nu(\mathrm{d}x)  \rc$$
for every $\bm z\in[0,\infty)^d,\bm t\in\RR^d$.
\end{proof}

\begin{rem}[Implications for strong-idt processes]
An id-process $H$ is called strong-idt, if 
\begin{align*}
    \big( H_t\big)_{t\geq 0}\sim \lc \sumn H^{(i)}_{ \frac{t}{n}}\rc_{t\geq 0}
\end{align*}
 for all $n\in\N$,  where $\lc H^{(i)}\rc_{i\in\N}$ denote i.i.d.\ copies of $H$.
Such processes are studied, among others, in \cite{koppmolchanov,maicanonicalspecrepofstabtail}. \cite{koppmolchanov} study the L\'evy measure and series representations of real-valued strong-idt processes without focus on non-decreasing paths. \cite{maicanonicalspecrepofstabtail} refines these results in the special case of non-decreasing $H$, which might possibly also take the value $\infty$. However, \cite{maicanonicalspecrepofstabtail} does not formally prove the extension to extended real-valued processes, despite he uses the results of \cite{koppmolchanov}. Proposition~\ref{propositionltchronometer} fills this gap by formally justifying that the claimed extension is correct. Furthermore, whereas \cite{koppmolchanov} work on the space of c\`adl\`ag functions equipped with the Skorohod ($J1$) metric, \cite{maicanonicalspecrepofstabtail} works with the L\'evy metric defined for distribution functions. While it is known that the two metrics are not equivalent in general, one can actually prove that their induced Borel $\sigma$-algebras on the space of non-decreasing paths coincide. Thus, implicitly both references indeed work with the same objects. In particular, \cite[Lemma 1]{maicanonicalspecrepofstabtail} implicitly shows with a tedious and probabilistic proof that the L\' evy measure of a non-decreasing strong-idt process is concentrated on non-decreasing paths. In this regard, Proposition~\ref{propositionltchronometer} provides a more direct proof of this fact. Moreover, it is even more general, since it holds for arbitrary non-decreasing and non-negative id-processes and not just strong-idt processes.
\end{rem}

\subsection{Proof of Theorem \ref{theoremchroncorrespondstominiddist}}
\begin{proof}
\textcolor{white}{a}
\begin{itemize}
\item[``$\Rightarrow$''] Let $n\in\N$. Since $\bmX$ is min-id, there exist i.i.d.\ sequences $(\bmX^{(i,1/n)})_{\leqn}\in\minf^\N$ such that $\bmX\sim\min_{\leqn} \bmX^{(i,1/n)}$. First, we claim that the exchangeability of $\bmX$ implies the exchangeability of $\bmX^{(1,1/n)}$. Seeking a contradiction, we assume that $\bmX^{(1,1/n)}$ is not exchangeable. In this case, there exists $\{i_1,\ldots,i_d\}\subset \N$ such that $(X^{(1,1/n)}_{1,i_1}\ldots ,X^{(1,1/n)}_{1,i_d})$ is not exchangeable. By similar arguments as in the proof of Proposition~\ref{propositionexchangeabilityexponentmeasure} there exist $\bmx\in\RR^d$ and a permutation $\pi$ on $\{1,\ldots,d\}$ such that $\pp(\bmX_d^{(1,1/n)}>\bmx)\not =\pp(\bmX_d^{(1,1/n)}>\pi(\bmx))$. This yields \begin{align*}
    \pp(X_{i_1}>x_1,\ldots, X_{i_d}>x_d)&=\pp\lc X^{(1,1/n)}_{i_1}>x_1,\ldots, X^{(1,1/n)}_{i_d}>x_d \rc^n\\
    &\not =\pp\lc X^{(1,1/n)}_{i_1}>\pi(\bmx)_1,\ldots, X^{(1,1/n)}_{i_d}>\pi(\bmx)_d\rc^n\\
    &=\pp\lc X_{i_1}>\pi(\bmx)_1,\ldots, X_{i_d}>\pi(\bmx)_d\rc,
\end{align*}
which is a contradiction. Therefore, $\bmX^{(1,1/n)}$ is exchangeable. 

Now, de Finetti's Theorem yields the existence of i.i.d.\ nnnd c\`adl\`ag processes $\lc H^{(i,1/n)}\rc_{\leqn}$ $\in D^\infty(\RR)$ such that 
$$\lc X^{(i,1/n)}_{j}\rc_{j\in\N} \sim \inf\big\{ t\in\RR \mid H^{(i,1/n)}_t\geq E^{(i)}_{j}\big\},$$
where $((E^{(i)}_{j})_{j\in\N})_{\leqn}$ are i.i.d.\ unit exponential independent of $\lc H^{(i,1/n)}\rc _{\leqn}$. 
Obviously, $\lim_{t\to-\infty}H^{(i,1/n)}_t=0$ almost surely, since $\pp\lc X^{(1/n)}_{i,j}=-\infty\rc>0$ otherwise. Moreover, $\pp\lc H^{(i,1/n)}_t=\infty\rc<1$ for all $t\in\RR$, since $\bell=\binfty$. Therefore, $H^{(i,1/n)}$ satisfies Condition $(\Diamond^\prime)$. It remains to show that $H^{(1,1)}=:H$ is infinitely divisible and unique. Let $\fb_k$ denote the survival function of $(X_1,\ldots, X_{k})$ and recall that $\lc X^{(1,1/n)}_{1},\ldots,X^{(1,1/n)}_{k} \rc\sim \fb_k^{1/n}$. Choose $\bm z\in\N^d$ and $\bm t\in\RR^d$, then
\begin{align*}
L(\bm z,\bm t)&=\ee\lk \exp\lc-\sum_{j=1}^{d} z_j H_{t_j} \rc\rk=\ee\lk \exp\lc -\sum_{j=1}^{d} \sum_{k=1}^{z_j} H_{t_j} \rc\rk\\
&=\fb_{\sum_{j=1}^d z_j}(\underbrace{t_1,\ldots,t_1}_{z_1 \text{ times}},\ldots,\underbrace{t_d,\ldots,t_d}_{z_d \text{ times}})\\
&=\lc\fb_{\sum_{j=1}^d z_j}^{1/n}\rc^n(\underbrace{t_1,\ldots,t_1}_{z_1 \text{ times}},\ldots,\underbrace{t_d,\ldots,t_d}_{z_d \text{ times}})\\
&=\ee\lk  \exp\lc-\sum_{j=1}^{d} \sum_{k=1}^{z_j} H^{(1,1/n)}_{t_j}\rc \rk^n =\ee\lk \prod_{i=1}^n \exp\lc-\sum_{j=1}^{d} z_j H^{(i,1/n)}_{t_j} \rc\rk\\
&=\ee\lk \exp\lc- \sum_{j=1}^{d} z_j\sum_{i=1}^n H^{(i,1/n)}_{t_j} \rc\rk.
\end{align*} 
Using the fact that the Laplace transform of a non-negative random vector is uniquely determined by its values on $\N^d$, see \cite{kleiber2013multivariate} for more details, this shows that $H_\bmt\sim\sum^n_{i=1} H^{(i,1/n)}_\bmt$. Since $\bmt$ and $n$ were arbitrary, we get that $H$ is infinitely divisible. The uniqueness of $H$ follows from 
$$ \ee\lk \exp\lc-\sum_{j=1}^d z_j H_{t_j} \rc\rk=\fb_{\sum_{j=1}^d z_j}(\underbrace{t_1,\ldots,t_1}_{z_1 \text{ times}},\ldots,\underbrace{t_d,\ldots,t_d}_{z_d \text{ times}}).$$
\item[``$\Leftarrow $'']
We refer to the survival function of $(X_1,\ldots,X_d)$ by $\fb_d$. Since the exchangeability of $\bmX$ is obvious by the construction, it suffices to show that $\fb_d^{1/n}$ is a survival function of a random variable on $\minf^d$ for every $d,n\in\N$ and that $\bell=\binfty$.

By Corollary~\ref{corcadlagsums}, there exist i.i.d.\ extended chronometers $(H^{(i,1/n)})_{1\leq i\leq n}$ such that $H\sim \sum_{i=1}^n H^{(i,1/n)}$. It easily follows that $\lim_{t\to-\infty}H^{(1,1/n)}_t=0$, which implies that $H^{(1,1/n)}\in M_\infty^0$. For $\bm t\in\RR^d$, we get 
\begin{align*}
\fb_d(\bm t)&=\ee\lk \exp\lc -\sum_{j=1}^d H_{t_j}\rc\rk=\ee\lk \exp\lc -\sum_{j=1}^d\sum_{i=1}^n H^{(i,1/n)}_{t_j}\rc\rk\\
&=\ee\lk \exp\lc -\sum_{j=1}^d H^{(1,1/n)}_{t_j}\rc\rk^n.
\end{align*}
Since $H^{(1,1/n)}_{t_j}$ is c\`adl\`ag we obtain that $$\fb_d^{1/n}(\bm t)=\ee\lk \exp\lc -\sum_{j=1}^d H_{t_j}^{(1,1/n)}\rc\rk$$ is the survival function of the first $d$ components of the exchangeable sequence
$$\bmX^{(1,1/n)}:=\lc \inf \big\{t\in\RR\mid  H^{(1,1/n)}_t\geq E_i\big\}\rc_{i\in\N}\in (-\infty	,\infty]^\N.$$ 
Therefore, $\bmX$ is min-id. Moreover, $\bell=\binfty$, since $H$ satisfies Condition $(\Diamond^\prime 2)$. Thus, $\bmX$ satisfies Condition $(\Diamond)$. Since $d$ and $n$ were arbitrary, the claim follows.
\end{itemize}
\end{proof}

\subsection{Proof of Theorem \ref{theoremexponentmeasureisiidmixture}}
\begin{proof}

Theorem~\ref{theoremchroncorrespondstominiddist} and Proposition~\ref{propositionextendibleexponentmeasure} provide a one-to-one correspondence between the min-id sequence $\bmX$, a (unique) L\'evy measure $\nu$ on $M^0_\infty$ with drift $b$ and an exponent measure $\mu$ on $E^\N_\binfty$. Choosing $\bm t\in\RR^d$, $d\in\N\cup\{\infty\}$, we can rewrite this correspondence as
\begin{align*}
\pp&\lc X_1>t_1,\ldots,X_d >t_d\rc=\ee\lk \exp\lc -\sumd H_{t_i}\rc\rk \\
&=\exp\lc - \sumd b(t_i) -\bigintssss_{M^0_\infty} 1-\exp\lc -\sumd x(t_i)\rc \nu(\mathrm{d}x)   \rc\\
&=\exp\lc -\sumd b(t_i) -\bigintssss_{M^0_\infty} 1-\prodd \exp\lc - x(t_i)\rc \nu(\mathrm{d}x)   \rc\\
&=\exp\lc -\sumd b(t_i) -\bigintssss_{\overline{M}^0_\infty} 1-\prodd \gb(t_i) \gamma(\mathrm{d}G)    \rc\\
&=\exp\lc -\sumd b(t_i) -\bigintssss_{\overline{M}^0_\infty} \otimes_{i=1}^d\pp_G\lc (-\infty,\infty]^d\setminus (\bm t,\binfty]\rc \gamma(\mathrm{d}G) \rc,\\
\end{align*}
where $\gb=1-G$ is a survival function of a random variable on $(-\infty,\infty]$ and $\gamma$ is the image measure of the L\'evy measure $\nu$ under the transformation $h: M^0_\infty\to \overline{M}^0_\infty,\ x\mapsto 1- \exp(-x(\cdot))$. This implies that for every $d\in\N\cup\{\infty\}$
\begin{align*}
\mu_d\lc E^d_\binfty\setminus (\bm t,\binfty] \rc&= \mu_{b,d} \lc (-\infty,\infty]^d\setminus (\bm t,\binfty]\rc  \\
&+\bigintssss_{\overline{M}^0_\infty} \otimes_{i=1}^d\pp_G\lc (-\infty,\infty]^d\setminus (\bm t,\binfty]\rc \gamma(\mathrm{d}G)  . 
\end{align*}
A similar argument as in the proof of Proposition~\ref{propositionexchangeabilityexponentmeasure} yields 
\begin{align*}
\mu_d(A)=\mu_{b,d}\lc A\rc  +\bigintssss_{\overline{M}^0_\infty} \otimes_{i=1}^d\pp_G\lc A\rc \gamma(\mathrm{d}G)  \text{ for all }A\in\bc(E^d_\binfty).
\end{align*}
It remains to verify the properties of $\gamma$. Obviously, $\gamma\lc 0_{D^\infty(\RR)}\rc=0$. Applying the inequalities $1-x\leq \min\{1,-\log(x)\}$ and $\min\{1,-\log(x)\}\leq e(1-x)$ for any $x\in[0,1]$ yields
\begin{align*}
\int_{\overline{M}^0_\infty} G(t)\gamma(\mathrm{d}G) =\int_{\overline{M}^0_\infty} 1-\gb(t)\gamma(\mathrm{d}G) \leq \int_{M^0_\infty} \min\{1,x(t)\} \nu(\mathrm{d}x)<\infty
\end{align*}
as well as
\begin{align*}
\int_{M^0_\infty} \min\{1,x(t)\} \nu(\mathrm{d}x)\leq e\int_{\overline{M}^0_\infty} 1-\gb(t)\gamma(\mathrm{d}G) = e\int_{\overline{M}^0_\infty} G(t)\gamma(\mathrm{d}G)  <\infty.
\end{align*} 
Therefore, the integrability condition of $\gamma$ is equivalent to the integrability condition of $\nu$.
\end{proof}

\subsection{Proof of Proposition \ref{propstrongidt}}
\begin{proof}
We observe that
$$H_t= \sum_{k \geq 1} -\log\Big\{ 1-G_k\Big( \frac{t}{S_k}\Big)\Big\} =\int_{[0,\infty) \times \overline{M}^0_\infty} -\log\lc 1-G\lc\frac{t}{S} \rc\rc N(\mathrm{d}(S,G))$$
is given by integration of the (measurable) function $(S,G)\mapsto -\log\lc 1-G(t/S)\rc$ w.r.t.\ the Poisson random measure $N$. Similarly, for $\bm z\in[0,\infty)^d$, $\sumd z_i H_{t_i}$ is given by the integration of $(S,G)\mapsto \sumd -z_i\log\lc 1-G(t_i/S)\rc$ w.r.t.\ the Poisson random measure $N$. Therefore, the Laplace transform of $H$ is given by the (one-dimensional) Laplace transform of an integral over a Poisson random measure. An application of \cite[Proposition 3.6]{resnickextreme} yields
\begin{align*}
    L(\bm z,\bm t)&=\ee\lk \exp\lc -\sumd z_i H_{t_i}\rc\rk \\
    &=\ee\lk \exp\lc-\bigintssss_{[0,\infty]\times \overline{M}^0_{\infty}}\sumd -z_i\log\lc 1-G\lc\frac{t_i}{S}\rc\rc N(\mathrm{d}(S,G))\rc \rk\\
    &=\exp\lc- \bigintssss_{[0,\infty]}\bigintssss_{ \overline{M}^0_{\infty}} 1-\prodd  \lc 1-G\lc\frac{t_i}{s}\rc\rc^{z_i} \rho(\mathrm{d}G)\kappa(\mathrm{d}s)\rc.
\end{align*}
Thus, $H$ is infinitely divisible and the exchangeable min-id sequence $\bmX$ associated with $H$ has exponent measure $\mu_{\kappa,\rho}$. We can express the survival function of $\bmX$ as
\begin{align*}
&\pp(\bmX>\bm t) =\exp \lc -\int_{\overline{M}^{0}_{\infty}}\int_0^{\infty}1-\prod_{i\in\N} \lc 1-G\lc \frac{t_i}{s}\rc\rc \kappa(\mathrm{d}s)\rho(\mathrm{d}G) \rc \\
&=\exp \lc -\int_{\overline{M}^{0}_{\infty}}\int_0^{\infty} \int_{[0,\infty]^\N} \id \bigg\{ y_i\leq \frac{t_i}{s} \text{ for some } i\in\N\bigg\} \lc\otimes_{i\in\N} \pp_G\rc(\mathrm{d}\bm y)\kappa(\mathrm{d}s)\rho(\mathrm{d}G) \rc   \\ 
&=\exp \lc -\int_{\overline{M}^{0}_{\infty}} \int_{[0,\infty]^\N}  \int_0^{\infty}\id \bigg\{ s\leq \max_{i\in\N} \frac{t_i}{y_i}  \bigg\} \kappa(\mathrm{d}s)\lc\otimes_{i\in\N} \pp_G\rc(\mathrm{d}\bm y)\rho(\mathrm{d}G) \rc  ,
\end{align*}
which finishes the argument. 
\end{proof}

\subsection{Proof of Proposition \ref{propositionintegratedidprocess}}
\begin{proof}
It is easy to see that $H^{(\kappa)}$ is infinitely divisible, since $x(\cdot)\mapsto \int_{0}^\cdot$ is a measurable map in $D^\infty\big([0,\infty)\big)$. By the generalized L\'evy--Ito representation \cite[Proposition 3.1 and Theorem 5.1]{rosinskiinfdivproc} there exists a version $V^\prime$ of $V$ such that 
$$ \lc V^\prime_s\rc _{s \geq 0}=\lc b_V(s)+ \int_{D^\infty(\RR)} x(s) N(\mathrm{d}x) \rc_{s \geq 0} ,$$
where $N$ denotes a Poisson random measure on $D^\infty\big([0,\infty)\big)$ with intensity $\nu_V$. Note that the compensating term in the generalized L\'evy--Ito representation can be omitted by \cite[Theorem 5.1]{rosinskiinfdivproc}. Moreover, $N$ can be chosen as a random measure on $D^\infty\big([0,\infty)\big)_+:=\{ x\in D^\infty\big([0,\infty)\big) \mid x(t)\geq 0 \text{ for all } t\geq 0\}$, which follows by similar arguments as in the proof of Proposition \ref{propositionltchronometer}. Thus,
\begin{align*}
\lc H^{(\kappa)}_t\rc_{t\geq 0}&=\lc \int_{0}^t V_s\kappa(\mathrm{d}s)\rc_{t\geq 0}\\
&\sim \lc \int_{0}^t \lc b_V(s)+ \int_{D^\infty\big([0,\infty)\big)_+} x(s) N(\mathrm{d}x)  \rc \kappa(\mathrm{d}s)\rc_{t\geq 0} .
\end{align*} 
Since $N$ is $\sigma$-finite and concentrated on non-negative functions we can use Fubini's Theorem to obtain
\begin{align*}
&\lc \int_{0}^t \lc b_V(s)+ \int_{D^\infty\big([0,\infty)\big)_+} x(s) N(\mathrm{d}x)  \rc \kappa(\mathrm{d}s)\rc_{t\geq 0}\\
=&\lc \int_{0}^t  b_V(s) \kappa(\mathrm{d}s) + \int_{D^\infty\big([0,\infty)\big)_+} \int_{0}^t x(s) \kappa(\mathrm{d}s) N(\mathrm{d}x)  \rc_{t\geq 0},
\end{align*}
which is a decomposition of $H^{(\kappa)}$ into a non-decreasing deterministic drift $b^{(\kappa)}(t):=\int_{0}^t  b(s)\kappa(\mathrm{d}s)$ and an integral over a Poisson random measure $\int_{D^\infty\big([0,\infty)\big)_+} x^{(\kappa)}(t) N(\mathrm{d}x)$, where $x^{(\kappa)}(t):=\int_{0}^t x(s) \kappa(\mathrm{d}s)$ is a non-decreasing function in $D^\infty\big([0,\infty)\big)_+$. Therefore, using the usual formula for the Laplace transform of an integral over a Poisson random measure \cite[Proposition 3.6]{resnickextreme}, we obtain for arbitrary $\bm z\in [0,\infty]^d$ and $\bm t\in[0,\infty)^d$
\begin{align*}
    \ee\lk \exp\lc-\sumd z_i H^{(\kappa)}_{t_i}\rc\rk &= \ee\lk \exp\lc-\sum z_i \lc b^{(\kappa)}_{t_i} +\int_{D^\infty\big([0,\infty)\big)_+} x^{(\kappa)}(t_i) N(\mathrm{d}x)\rc  \rc\rk\\
    &=\exp\bigg( -\sum z_i b^{(\kappa)}_{t_i}-\\
    &\int_{D^\infty\big([0,\infty)\big)_+} 1-\exp\lc - \sumd z_i x^{(\kappa)} (t_i)\rc \nu_V(\mathrm{d}x)\bigg).
\end{align*}
Note that 
\begin{align*}
  &\exp\lc -\sum z_i b^{(\kappa)}_{t_i} -\int_{D^\infty\big([0,\infty)\big)_+} 1-\exp\lc - \sumd z_ix^{(\kappa)} (t_i)\rc \nu_V(\mathrm{d}x)\rc\\
  &´\overset{(\star)}{=}\exp\lc -\sum z_i b^{(\kappa)}_{t_i} -\int_{M^0_\ell} 1-\exp\lc - \sumd z_i x (t_i)\rc \nu_\kappa(\mathrm{d}x)\rc, 
\end{align*}
since we use that $x^{(\kappa)}\in M^0_\ell$ and we only omit those terms in $(\star)$ for which $\exp\big( - \sumd $ $ z_i x^{(\kappa)} (t_i)\big)=1$. 

\end{proof}

\end{appendix}

\bibliographystyle{dinat}
\bibliography{quellenminid}


\end{document}